\newtheorem{thm}{Theorem}[section]
\newtheorem{prop}[thm]{Proposition}
\newtheorem{cor}[thm]{Corollary}
\newtheorem{lem}[thm]{Lemma}
\newtheorem{dfn}[thm]{Definition}
\newtheorem{obs}[thm]{Observation}
\newcommand{\fl}{\mbox{$\mathfrak{F}$}}
\newcommand{\M}{\mbox{$\mathfrak{M}$}}
\begin{document}

\title{Fragments of Some Subintuitionistic Logics}

\author{\textbf{Fatemeh Shirmohammadzadeh Maleki}\\
Department of Logic, Iranian Institute of Philosophy\\
Arakelian 4, Vali-e-Asr, Tehran, Iran,~f.shmaleki2012@yahoo.com\\
\textbf{Dick de Jongh}\\
Institute for Logic, Language and Computation, University of Amsterdam\\
The Netherlands, D.H.J.deJongh@uva.nl}

\maketitle
\date{}
\begin{abstract} In this article we determine the implicational fragments of most of the known subintuitionistic logics.
\end{abstract}

\section{Introduction}

In a series of papers \cite{Dic,  DF2, Dic4, FD6, FD} we discussed a number of subintuitionistic logics between {\sf WF} and {\sf IPC}. 
Logics between {\sf F} and {\sf IPC} had been discussed before by others and by us as well \cite{Celani2, Co, Dic, DF2, a2}.
As {\sf IPC} the language of subintuitionistic logics contains the connectives $ \vee, \wedge, \rightarrow $  and the propositional constant $  \perp$.

In this paper we investigate the implicational fragments of these logics. We denote a fragment by listing the connectives between square brackets, so $ [\rightarrow] $ is the fragment consisting of the formulas that only contain the connective  $ \rightarrow $, the $ \rightarrow $-formulas. For any subintuitionistic logic $L$ we define $L_{[\rightarrow]} $ as the logic consisting of  the $ \rightarrow $-formulas provable in $ L$. Our main object is to establish an axiomatization of $L_{[\rightarrow]} $. The study of the $[\rightarrow]$-fragments creates an overview of implicational principles weaker than the positive implication of {\sf IPC}, and their relative strengths.
 
We have an additional interest in $ L_{[\rightarrow, \wedge]} $,  the $[\rightarrow , \wedge]$-fragment of $L$.
This fragment is in {\sf IPC} closely related to the  $[\rightarrow]$-fragment~\cite{Ren}, and for many logics easier to describe. The subject of $ [\rightarrow] $-fragments of subintuitionistic logics was previously studied by Do\v{s}en~\cite{Dosen} and Kikuchi~\cite{Kent}. Also relevant is~\cite{Maleki}, which presents cut-free sequent systems for {\sf WF} and some of its extensions. In these systems, the rules governing the implication connective $\rightarrow$ alone yield fragments that are equivalent to the Hilbert-style axiomatizations developed in the present paper.
The logics studied in this paper including their models and the axioms used to axiomatize them are described below in two Appendices, the logics with Kripke models in~\ref{kripke}, the ones with neighborhood models in~\ref{neighbor}.

In Section 2 we describe $\mathsf{F_{[\rightarrow]}}$ and $\mathsf{F_{[\rightarrow,\wedge]}}$. In Section~\ref{extf} we move to extensions of {\sf F}. Then, In Section~\ref{fwf}, we treat the basic logic with neighborhood models {\sf WF}. The implicational fragments of its extensions are studied in Section~\ref{extwf}. In taht section we also discuss some unresolved questions, and state soem conjectures about them. The implication-conjunction fragments of some extensions of {\sf WF} in are discussed in Section~\ref{icextwf}.

\section{Fragments of the subintuitionistic Logic {\sf F}}

We discuss in this section the implicational and the implication-conjunction fragment of {\sf F}, the basic subintuitionistic logic with Kripke models \cite{Co,Dic}.

\subsection{The implicational fragment of $ {\sf F}  $}


The Kripke frames  of the fragments are simply relativized versions of the Kripke frames  of the fragments of \cite{Dic, Co}. As mentioned above we describe the properties of the logic {\sf F}, its extensions and their models in Appendix~\ref{kripke}.

\begin{dfn}\label{Frr}
 A \textbf{rooted subintuitionistic $[\rightarrow]$-Kripke model} is a quadruple $ \langle W, g, R, V \rangle $, where  $ \langle W, g, R \rangle $ is a rooted subintuitionistic Kripke frame and $ V:P\rightarrow 2^{W} $  a valuation function on the set of propositional variables $P$. The binary relation $ \Vdash $ is defined on $ w \in W $ as follows.
\begin{enumerate}
\item $ w \Vdash p~~~~~~~~\Leftrightarrow ~ ~w \in V(p) $, for any $ p \in P $,



\item  $w \Vdash A\rightarrow B  ~\Leftrightarrow$ ~ for each v with $w R v$, if $v \Vdash A$ then $v \Vdash B$.
\end{enumerate}
We define $\Gamma\Vdash_{{\sf F_{[\rightarrow]}} }\! A$ iff for all $ \mathfrak{M}$, $ w \in \mathfrak{M}$, if $\mathfrak{M}, w\Vdash \Gamma  $ then $\mathfrak{M}, w\Vdash A  $.
\end{dfn}

We prove strong completeness for a Hilbert system for $ {\sf F_{[\rightarrow]}}  $. Simple completeness of the fragment was already proved by K. Do\v{s}en \cite{Dosen}. Here we give a new proof.


\begin{thm}\label{CFrr}
Assume $ \bar{A_{n}}\rightarrow B $ stands for B if  $ n=0 $, for $ A_{1}\rightarrow B $ if $ n=1 $, and for $A_{1}\rightarrow (A_{2}\rightarrow  ... \rightarrow (A_{n}\rightarrow B) ... )  $ if $ n\geq 2 $. Then the following  axiom schemes and rules in $ \mathcal{L} _{\rightarrow} $ axiomatize the fragment $ {\sf F_{[\rightarrow]}}  $:
\begin{enumerate}
\item $A\rightarrow A$
\item $ \dfrac{A ~~A\rightarrow B}{B}$
\item $  \dfrac{A}{B\rightarrow A} $
\item $ \dfrac{\bar{A_{n}}\rightarrow (B\rightarrow C)~~~~\bar{A_{n}}\rightarrow (C\rightarrow D)}{\bar{A_{n}}\rightarrow (B\rightarrow D)} ~~~~n\geq 0$.
\end{enumerate}
\end{thm}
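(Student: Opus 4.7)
The plan is to prove both soundness and completeness of the given Hilbert system with respect to the class of rooted subintuitionistic $[\rightarrow]$-Kripke models. Soundness is routine. The axiom $A\to A$ holds at every world, and modus ponens preserves truth at a world. Rule~3 is sound precisely because $\Vdash$ is defined as truth-preservation at every world of every model (not merely at designated roots): if $A$ is valid, then for any $w$ and any $v$ with $wRv$ and $v\Vdash B$ one has $v\Vdash A$, whence $w\Vdash B\to A$. Rule~4 is verified by induction on $n$; the case $n=0$ is the fact that if $w\Vdash B\to C$ and $w\Vdash C\to D$ then chasing any successor $v$ with $wRv$ gives $w\Vdash B\to D$, and the inductive step just chases one further $R$-successor.

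For completeness the plan is a canonical-model construction. Define $\Gamma\vdash A$ so that the only rule available on hypotheses is modus ponens, while rules~3 and~4 are used to generate the set of theorems; equivalently, $\Gamma\vdash A$ iff there are $A_{1},\dots ,A_{n}\in\Gamma$ with $\vdash A_{1}\to(\cdots\to(A_{n}\to A)\cdots)$. Take as worlds the sets $\Gamma$ of $\rightarrow$-formulas closed under $\vdash$, with the canonical root $g$ being the set of theorems of $\mathsf{F_{[\rightarrow]}}$. Define the accessibility by $\Gamma R\Delta$ iff whenever $B\to C\in\Gamma$ and $B\in\Delta$ then $C\in\Delta$, and set $V(p)=\{\Gamma:p\in\Gamma\}$. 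One checks that $gR\Delta$ for every world $\Delta$, so the canonical frame is rooted.

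The central step is the truth lemma $\Gamma\Vdash A\Leftrightarrow A\in\Gamma$, proved by induction on $A$. The atomic case is immediate, and the direction ``$A\to B\in\Gamma\Rightarrow\Gamma\Vdash A\to B$'' is read off from the definition of $R$ together with the induction hypothesis. The converse is the real content: given $A\to B\notin\Gamma$, one must produce a world $\Delta$ with $\Gamma R\Delta$, $A\in\Delta$, and $B\notin\Delta$. Let $\Delta$ be the smallest set containing $A$, containing every theorem of $\mathsf{F_{[\rightarrow]}}$, and closed under the operation ``if $D\to C\in\Gamma$ and $D\in\Delta$ then $C\in\Delta$''. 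By construction $A\in\Delta$ and $\Gamma R\Delta$. The claim $B\notin\Delta$ is proved by showing inductively that $A\to C\in\Gamma$ for every $C\in\Delta$: for $C=A$ this is $A\to A$ (a theorem, hence in $\Gamma$); for a theorem $C$ it follows from rule~3 applied to $\vdash C$; and for $C$ obtained from $D\to C\in\Gamma$ with $D\in\Delta$ the induction hypothesis gives $A\to D\in\Gamma$, and rule~4 at $n=0$ applied to $A\to D$ and $D\to C$ delivers $A\to C\in\Gamma$.

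The delicate point I expect to be the main obstacle is setting up ``derivability from hypotheses'' and ``closed theory'' carefully enough that the closure of $\Gamma$ under the nested transitivity encoded by rule~4 is actually forced by the definition of $\vdash$; this uses that rule~4 is a theorem-rule (applied to derivable formulas) and that a closed theory automatically absorbs the conclusion when its premises already lie in $\Gamma$ via the chosen $\vdash$. Once the truth lemma is available, strong completeness is standard: given $\Gamma\not\vdash A$, a Lindenbaum-style extension (Zorn's lemma) produces a closed theory $\Gamma^{*}\supseteq\Gamma$ with $A\notin\Gamma^{*}$, and the truth lemma applied in the canonical model at the world $\Gamma^{*}$ yields $\Gamma^{*}\Vdash\Gamma$ together with $\Gamma^{*}\not\Vdash A$, witnessing $\Gamma\not\Vdash_{\mathsf{F_{[\rightarrow]}}} A$.
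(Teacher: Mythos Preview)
Your overall architecture (canonical model, Lindenbaum extension, truth lemma) is the paper's, and your $\Delta$ in fact coincides with the paper's $\Sigma=\{E:A\to E\in\Gamma\}$. But two choices in your setup of $\vdash$ break the argument.

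First, full modus ponens does \emph{not} preserve truth at a world in these frames: with no reflexivity one can have $w\Vdash A$ and $w\Vdash A\to B$ yet $w\not\Vdash B$, since $w\Vdash A\to B$ constrains only the $R$-successors of $w$. So a consequence relation with unrestricted MP on hypotheses is unsound for $\Vdash$, and the ``equivalently, $\vdash A_1\to(\cdots\to(A_n\to A)\cdots)$'' characterization is simply false here (there is no multi-premise deduction theorem in $\mathsf{F_{[\rightarrow]}}$). The paper uses only \emph{weak} MP on hypotheses: from $A$ and a \emph{theorem} $A\to B$ infer $B$.

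Second, you bar rule~4 from acting on hypotheses; the paper does the opposite, restricting only rules~2 and~3 and leaving rule~4 unrestricted. This matters twice in your truth-lemma step. Your inference ``$A\to D\in\Gamma$ and $D\to C\in\Gamma$, hence $A\to C\in\Gamma$'' is precisely closure of the world $\Gamma$ under rule~4 at $n=0$, which your $\vdash$ does not provide. And you never verify that $\Delta$ is itself a world; for the induction hypothesis of the truth lemma to apply at $\Delta$ it must be one. Checking that $\{E:A\to E\in\Gamma\}$ is closed under rule~4 at level $n$ requires $\Gamma$ to be closed under rule~4 at level $n{+}1$: from $A\to(\bar C_n\to(E\to F))$ and $A\to(\bar C_n\to(F\to G))$ in $\Gamma$ one must obtain $A\to(\bar C_n\to(E\to G))\in\Gamma$. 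This is exactly why the axiomatization carries the full scheme over all $n\ge 0$, and it is the step your sketch suppresses.
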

  Before the proof we give some definitions. We will immediately use $ \mathsf{ F_{[\rightarrow]}}  $ for the logic based on this axiomatic system or the set of formulas provable in the logic, making sure to avoid confusion with the fragment $ \mathsf{ F_{[\rightarrow]}}  $. 

\begin{dfn}

We define $\Gamma\vdash_{{\sf F_{[\rightarrow]}} }\!A$ iff there is a  derivation of A from $ \Gamma $ using rule 3 of the system $ {\sf F_{[\rightarrow]}}  $ only when there are no assumptions, and rule 2, $M\!P$, only when the derivation of $ A\rightarrow B $ contains no assumptions. We call this use of $M\!P$ \textbf{weak $M\!P$}.
\end{dfn}

\begin{thm}\label{lli}
\em\textbf{(Weak Deduction Theorem)} $A\vdash_{\mathsf{F_{[\rightarrow]}} }\!B  $ if and only if $\vdash_{{\sf F_{[\rightarrow]}} }\! A\rightarrow B $.
\end{thm}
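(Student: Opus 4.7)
The plan is to prove the two directions separately, handling the right-to-left implication directly and the left-to-right implication by induction on the length of a derivation witnessing $A\vdash_{\mathsf{F_{[\rightarrow]}}} B$.

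For the $(\Leftarrow)$ direction, assume $\vdash_{\mathsf{F_{[\rightarrow]}}} A\rightarrow B$. Since this derivation uses no assumptions, weak $M\!P$ legitimately applies to the assumption $A$ and the theorem $A\rightarrow B$, giving $A\vdash_{\mathsf{F_{[\rightarrow]}}} B$ in one step.

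For the $(\Rightarrow)$ direction, I would proceed by induction on the derivation of $A\vdash_{\mathsf{F_{[\rightarrow]}}} B$, doing a case analysis on the last rule used. If $B$ is the assumption $A$, axiom~1 gives $\vdash A\rightarrow A$. If $B$ is an instance of axiom~1, then $\vdash B$, and rule~3 (which is applicable because its premise $B$ has been derived without assumptions) yields $\vdash A\rightarrow B$. If $B$ itself was obtained by rule~3, the restriction on that rule forces the derivation of its premise, and hence of $B$, to be assumption-free, so $\vdash B$ and one more application of rule~3 gives $\vdash A\rightarrow B$. If $B$ was obtained from $C$ and $C\rightarrow B$ by weak $M\!P$, then the restriction gives $\vdash C\rightarrow B$, while the induction hypothesis applied to the subderivation of $C$ gives $\vdash A\rightarrow C$; rule~4 with $n=0$ (pure transitivity) then delivers $\vdash A\rightarrow B$. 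The remaining case is when $B=\bar{C_n}\rightarrow(D\rightarrow F)$ is obtained by rule~4 from $\bar{C_n}\rightarrow(D\rightarrow E)$ and $\bar{C_n}\rightarrow(E\rightarrow F)$; the induction hypothesis gives $\vdash A\rightarrow(\bar{C_n}\rightarrow(D\rightarrow E))$ and $\vdash A\rightarrow(\bar{C_n}\rightarrow(E\rightarrow F))$, and rule~4 applied with the extended prefix $A,C_1,\dots,C_n$ of length $n+1$ yields $\vdash A\rightarrow \bar{C_n}\rightarrow(D\rightarrow F)$, i.e., $\vdash A\rightarrow B$.

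The main obstacle is the rule~4 case. A plain transitivity axiom (the $n=0$ instance) would not suffice when rule~4 is applied beneath a nested implication, since the extra hypothesis $A$ must be absorbed into the prefix $\bar{A_n}$ rather than appear at the outermost level. The schematic form of rule~4 in Theorem~\ref{CFrr}, with its arbitrary prefix, is exactly what makes the induction go through; this is the combinatorial heart of the argument and is presumably the reason rule~4 was chosen in that generality in the first place.
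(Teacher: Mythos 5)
Your proof is correct and follows essentially the same route as the paper's: induction on the derivation for the left-to-right direction, with the weak $M\!P$ case handled by the $n=0$ instance of rule~4 and the rule~4 case handled by extending the prefix with $A$, and the converse by a single application of weak $M\!P$. Your case split is just slightly finer (treating axiom~1 and rule~3 separately where the paper lumps them into "$B$ is a theorem"), but the substance is identical.
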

\begin{proof} $ \Rightarrow $: By induction on the length of the proof of $A\vdash_{{\sf F_{[\rightarrow]}} }\!B  $.

\noindent If $B$ is a theorem of $ {\sf F_{[\rightarrow]}}  $, then $ \vdash_{{\sf F_{[\rightarrow]}} }\!B $. So,
by rule 3, $\vdash_{{\sf F_{[\rightarrow]}} }\!A\rightarrow B  $.

\noindent The case $A\vdash_{{\sf F_{[\rightarrow]}} }\!A  $ is covered by Axiom 1.



\noindent If $ A\vdash_{{\sf F_{[\rightarrow]}} }\!B   $, and $\vdash_{{\sf F_{[\rightarrow]}} }\!B\rightarrow C  $ was used to obtain $ A\vdash_{{\sf F_{[\rightarrow]}} }\!C   $, then, by the induction hypothesis, $\vdash_{{\sf F_{[\rightarrow]}} }\!A\rightarrow B  $ and hence, by rule 4,  $ \vdash_{{\sf F_{[\rightarrow]}} }\!A\rightarrow C $.

\noindent  If $ A\vdash_{{\sf F_{[\rightarrow]}} }\!\bar{A_{n}}\rightarrow (B\rightarrow C)   $ and $ A\vdash_{{\sf F_{[\rightarrow]}} }\!\bar{A_{n}}\rightarrow (C\rightarrow D)   $.  Then, by the induction hypothesis $\vdash_{{\sf F_{[\rightarrow]}} }\!A\rightarrow (\bar{A_{n}}\rightarrow (B\rightarrow C) )  $ and $\vdash_{{\sf F_{[\rightarrow]}} }\!A\rightarrow (\bar{A_{n}}\rightarrow (C\rightarrow D)  ) $.  Now, by  rule 4, we conclude that $\vdash_{{\sf F_{[\rightarrow]}} }\!A\rightarrow (\bar{A_{n}}\rightarrow (B\rightarrow D)  ) $.

\noindent $ \Leftarrow $: Immediate by weak $M\!P$.
\end{proof}

\begin{dfn}\label{r1}
\begin{enumerate}
\item  A set of sentences $ \Delta$ is an $\mathsf{F_{[\rightarrow]} }$-theory if and only if
\begin{enumerate}
\item   $\bar{A_{n}}\rightarrow (B\rightarrow C) \in \Delta, ~\bar{A_{n}}\rightarrow (C\rightarrow D) \in \Delta ~\Rightarrow~ \bar{A_{n}}\rightarrow (B\rightarrow D) \in \Delta, $

\item  $ \vdash_{\sf F_{[\rightarrow]}}A\rightarrow B ~\Rightarrow$  $($if $ A \in \Delta$, then $ B \in \Delta), $

\item ${\sf F_{[\rightarrow]}}$ is contained in $ \Delta $.
\end{enumerate}
\item  For theories $\Gamma,\,\Delta$, $ \Gamma R\,\Delta$ iff for all $A\rightarrow B \in \Gamma$, $A\in \Delta\,\Rightarrow B \in \Delta$.
\end{enumerate}
\end{dfn}

We may call $\mathsf{F_{[\rightarrow]} }$-theories just $[\rightarrow]$-theories, or even theories, for short. The $R$-relation is just sued in the definition of canonical models.

\begin{prop}
$ \Delta $ is an $\mathsf{F_{[\rightarrow]} }$-theory  $ \Longleftrightarrow $ $( \Delta\vdash_{{\sf F_{[\rightarrow]}} }\!A $ if and only if $ A \in \Delta)$.
\end{prop}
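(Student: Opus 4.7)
The plan is to prove both directions by carefully matching the three closure conditions defining an $\mathsf{F_{[\rightarrow]}}$-theory against the three kinds of inference steps allowed in the restricted derivation relation $\vdash_{\mathsf{F_{[\rightarrow]}}}$.

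For the direction $(\Leftarrow)$, assume that the biconditional $\Delta\vdash_{\mathsf{F_{[\rightarrow]}}}A \iff A \in \Delta$ holds. I would verify the three clauses of Definition~\ref{r1}(1) in turn. Clause (c) is immediate: any theorem $A$ of $\mathsf{F_{[\rightarrow]}}$ satisfies $\Delta\vdash A$ trivially, so $A\in\Delta$. For clause (b), given $\vdash A\to B$ with $A\in\Delta$, I use weak $M\!P$ (legitimate here because $A\to B$ was derived with no assumptions) to conclude $\Delta\vdash B$, hence $B\in\Delta$. For clause (a), given that $\bar A_n\to(B\to C)$ and $\bar A_n\to(C\to D)$ both belong to $\Delta$, they are both derivable from $\Delta$ in one step; rule 4 applies without any restriction on assumptions, producing $\Delta\vdash \bar A_n\to(B\to D)$, which lies in $\Delta$ by hypothesis.

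For the direction $(\Rightarrow)$, assume $\Delta$ is an $\mathsf{F_{[\rightarrow]}}$-theory. The nontrivial half of the biconditional is to show that $\Delta\vdash A$ implies $A\in\Delta$; the converse is immediate since each $A\in\Delta$ is a one-line derivation from $\Delta$. I would argue by induction on the length of the derivation witnessing $\Delta\vdash A$. If $A$ is Axiom 1 or is obtained by rule 3 (which, by the restriction, requires no assumptions), then $\vdash_{\mathsf{F_{[\rightarrow]}}}A$, so $A\in\Delta$ by clause (c). If $A$ is an assumption, $A\in\Delta$ by hypothesis. If $A$ follows by weak $M\!P$ from $B$ and $B\to A$, then $B\to A$ has a derivation without assumptions, so $\vdash B\to A$; the induction hypothesis gives $B\in\Delta$, and clause (b) yields $A\in\Delta$. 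Finally, if $A=\bar A_n\to(B\to D)$ is obtained by rule 4 from $\bar A_n\to(B\to C)$ and $\bar A_n\to(C\to D)$, both premises are in $\Delta$ by the induction hypothesis, and clause (a) delivers $A\in\Delta$.

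The main conceptual point (and the only place where the restrictions in the definition of $\vdash_{\mathsf{F_{[\rightarrow]}}}$ really matter) is the handling of rules 2 and 3 in the inductive step: because these are precisely the inferences the definition of $\vdash_{\mathsf{F_{[\rightarrow]}}}$ restricts to assumption-free subderivations, the induction stays in sync with closure clauses (b) and (c), while the unrestricted rule 4 matches closure clause (a) exactly. No real obstacle arises; the proof is bookkeeping, and the only care needed is to ensure the induction on derivations uses the restricted notion of $\vdash_{\mathsf{F_{[\rightarrow]}}}$ consistently throughout.
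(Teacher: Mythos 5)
Your proof is correct and follows essentially the same route as the paper's: the direction from the derivability biconditional to the theory clauses is the routine verification the paper dismisses as trivial, and the converse is exactly the paper's ``easy induction on the length of the proof,'' which you spell out case by case, correctly matching the restricted uses of rules 2 and 3 with clauses (b) and (c) and the unrestricted rule 4 with clause (a).
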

\begin{proof}
From right to left is trivial. The other direction is by an easy induction on the length of the proof.
\end{proof}The next few propositions are standard, and have also been produced for the whole of {\sf F} in~\cite{Dic}. Here we will give complete proofs, but when discussing later fragments in this paper we mainly just restrict ourselves to comments about what has to be changed or added.

\begin{dfn}\label{r2}
We call $ \left\lbrace  A\,|\vdash_{{\sf F_{[\rightarrow]}} }\!A\right\rbrace$ the \textbf{empty $ {[\rightarrow]} $-theory}.
\end{dfn}
\begin{prop}\label{empty}
The empty $ {[\rightarrow]} $-theory $\Delta$ is a $ {[\rightarrow]} $-theory.
\end{prop}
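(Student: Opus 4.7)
The plan is to verify directly that the empty theory $\Delta = \{A \mid\, \vdash_{\mathsf{F_{[\rightarrow]}}} A\}$ satisfies each of the three closure conditions in Definition~\ref{r1}(1). Each condition will correspond to one of the rules or axioms of the Hilbert system of Theorem~\ref{CFrr}, so no induction or model-theoretic argument is needed.

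For condition (a), suppose $\bar{A_n}\rightarrow (B\rightarrow C)\in\Delta$ and $\bar{A_n}\rightarrow(C\rightarrow D)\in\Delta$. By definition of $\Delta$, both formulas are theorems of $\mathsf{F_{[\rightarrow]}}$. Applying rule 4 of the axiomatization (which is a rule, not a conditional implication, so it can be applied to theorems without hypotheses) yields $\vdash_{\mathsf{F_{[\rightarrow]}}}\bar{A_n}\rightarrow(B\rightarrow D)$, hence $\bar{A_n}\rightarrow(B\rightarrow D)\in\Delta$.

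For condition (b), suppose $\vdash_{\mathsf{F_{[\rightarrow]}}} A\rightarrow B$ and $A\in\Delta$, so $\vdash_{\mathsf{F_{[\rightarrow]}}} A$. Since the derivation of $A\rightarrow B$ has no assumptions, weak $M\!P$ (rule 2) applies and yields $\vdash_{\mathsf{F_{[\rightarrow]}}} B$, so $B\in\Delta$. For condition (c), $\mathsf{F_{[\rightarrow]}}\subseteq\Delta$ holds by the very definition of $\Delta$ as the set of theorems of $\mathsf{F_{[\rightarrow]}}$.

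There is essentially no obstacle here; the only subtle point is making sure the side condition on weak $M\!P$ is satisfied when verifying condition (b). That is immediate because the hypothesis states that $A\rightarrow B$ is a theorem, i.e.\ derivable without assumptions, which is exactly the restriction built into weak $M\!P$.
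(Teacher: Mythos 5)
Your proof is correct and follows essentially the same route as the paper: each closure condition of the empty theory is verified directly by applying rule 4, (weak) $M\!P$, and the definition of the set of theorems, respectively. The extra remark about the side condition on weak $M\!P$ is accurate but not an issue, since in condition (b) one is reasoning about assumption-free derivability anyway.
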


\begin{proof}
Let $\bar{A_{n}}\rightarrow (B\rightarrow C) \in \Delta$ and $\bar{A_{n}}\rightarrow (C\rightarrow D) \in \Delta$. Then $ \vdash \bar{A_{n}}\rightarrow (B\rightarrow C) $ and $ \vdash\bar{A_{n}}\rightarrow (C\rightarrow D)$. So, $\vdash \bar{A_{n}}\rightarrow (B\rightarrow D)$.  By definition of the empty theory, $\bar{A_{n}}\rightarrow (B\rightarrow D) \in \Delta  $.

\noindent Let $ \vdash A\rightarrow B $ and $ A \in \Delta $. Then  $ \vdash A $, so $ \vdash B $. By definition of empty theory,  $ B \in \Delta $.

\noindent Finally, it is trivial that ${\sf F_{[\rightarrow]}}$ is contained in $ \Delta $.
\end{proof}

Note that this proof is quite general. We will not repeat it for other fragments we will treat.

\begin{lem}
\label{g}
If $ \Sigma\nvdash_{{\sf F_{[\rightarrow]}} }\!A$, then there is a $ \Pi\supseteq \Sigma $  such that $ \Pi $ is a theory and $ A \notin \Pi $.
\end{lem}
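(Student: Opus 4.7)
The plan is a Lindenbaum-style greedy construction. I would enumerate the formulas of $\mathcal{L}_\rightarrow$ as $B_0, B_1, \ldots$, set $\Pi_0 = \Sigma$, and let
\[
\Pi_{n+1}=\begin{cases}\Pi_n\cup\{B_n\} & \text{if }\Pi_n\cup\{B_n\}\nvdash_{{\sf F_{[\rightarrow]}}} A,\\ \Pi_n & \text{otherwise,}\end{cases}
\]
and take $\Pi=\bigcup_n\Pi_n$. Clearly $\Sigma\subseteq\Pi$. An easy induction, with base case $\Sigma\nvdash A$ coming from the hypothesis, shows $\Pi_n\nvdash A$ for every $n$, and since derivations use only finitely many premises this lifts to $\Pi\nvdash A$.

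By the preceding proposition, to see that $\Pi$ is a theory it is enough to check that $\Pi\vdash_{{\sf F_{[\rightarrow]}}} B$ implies $B\in\Pi$. I would assume for contradiction that $\Pi\vdash B$ but $B\notin\Pi$; writing $B=B_n$, the construction forces $\Pi_n\cup\{B_n\}\vdash A$, and monotonicity in the premise set yields $\Pi\cup\{B_n\}\vdash A$. Combined with $\Pi\vdash B_n$, this should give $\Pi\vdash A$, contradicting what was established above.

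The main obstacle is therefore a \textbf{cut property}: if $\Pi\cup\{C\}\vdash_{{\sf F_{[\rightarrow]}}} D$ and $\Pi\vdash_{{\sf F_{[\rightarrow]}}} C$, then $\Pi\vdash_{{\sf F_{[\rightarrow]}}} D$. The standard route through a deduction theorem is unavailable, since the Weak Deduction Theorem handles only single-assumption derivations of theorems. Instead, I would fix a derivation $\mathcal{D}_C$ of $C$ from $\Pi$ and, in a derivation of $D$ from $\Pi\cup\{C\}$, replace every leaf occurrence of the assumption $C$ by $\mathcal{D}_C$. The crucial check is that the restrictions defining weak derivability are preserved under this splicing: any application of rule 3 sits above a subderivation with no assumptions from $\Pi\cup\{C\}$, so the leaf $C$ is absent from that subtree and splicing leaves it untouched; similarly, each application of MP has its major premise $X\rightarrow Y$ produced by an assumption-free subderivation, which is likewise unaffected. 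The resulting tree is therefore a legitimate derivation of $D$ from $\Pi$ alone, and the cut step goes through.

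With cut in hand the construction gives a theory $\Pi\supseteq\Sigma$ with $A\notin\Pi$, as required.
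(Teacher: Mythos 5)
Your proof is correct, and its skeleton is the paper's own: the same greedy Lindenbaum construction, the same preservation of $\Pi_n\nvdash_{{\sf F_{[\rightarrow]}}}A$, and the same appeal to finiteness of derivations to pass to $\Pi$. Where you genuinely diverge is in how deductive closure of $\Pi$ is verified. The paper checks the clauses of Definition~\ref{r1} directly: it seeds $\Pi_0$ with $\Sigma\cup{\sf F_{[\rightarrow]}}$ to get clause (c), enumerates formulas \emph{with infinitely many repetitions} so that the rejected conclusion $\bar{A_i}\rightarrow(B\rightarrow D)$ can be taken at a stage where both premises already lie in $\Pi_n$, and then replaces that single assumption by one unrestricted application of rule 4, obtaining $\Pi_n\vdash A$ and a contradiction; the weak-MP clause is dismissed as ``the same mechanism.'' You instead reduce everything to ``$\Pi\vdash B$ implies $B\in\Pi$'' via the preceding proposition and prove a general cut property for the restricted consequence relation by splicing a $\Pi$-derivation of $C$ into the assumption leaves, noting that the subderivations governed by the rule-3 and weak-$M\!P$ restrictions are assumption-free and hence untouched. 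That cut lemma is precisely the content the paper leaves implicit in its ``and so, \dots'' step; making it explicit buys rigour, lets you dispense with the repeated enumeration and with putting the theorems into $\Pi_0$ (closure under $\vdash$ already yields clause (c)), and transfers verbatim to the other fragments, which is what the paper's repeated remark that the proof ``is quite general'' is gesturing at. One small wording point: say that the \emph{premise} of each rule-3 application is derived by an assumption-free subderivation (rather than that the application ``sits above'' one); that is the check you in fact carry out, and it is the right one.
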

\begin{proof} By assumption and by definition of provability we conclude that  $ A \notin \Sigma $. Enumerate all formulas, with infinitely many repetitions: $ B_{0}, B_{1}, ... $ and define
\begin{enumerate}
\item[] $ \Pi_{0}=\Sigma \cup F_{[\rightarrow]}$,

\item[] $ \Pi_{n+1}= \Pi_{n} \cup \left\lbrace B_{n} \right\rbrace  $ if $ \Pi_{n} , B_{n} \nvdash_{{\sf F_{[\rightarrow]}} }\!A $,

\item[]   $ \Pi_{n+1}=  \Pi_{n}  $ otherwise.
 \end{enumerate}
 Take $\Pi $ to be the union of all $ \Pi_{n} $.
By the definition of $ \Pi $, it is clear that $ A \notin \Pi $.

We must show that $\Pi $ is a theory.

\noindent Assume  $\bar{A_{i}}\rightarrow (B\rightarrow C) \in \Pi $ and $\bar{A_{i}}\rightarrow (C\rightarrow D) \in \Pi $, we must show that $\bar{A_{i}}\rightarrow (B\rightarrow D) \in \Pi $.  Let
$\bar{A_{i}}\rightarrow (B\rightarrow D) = B_{n} \notin \Pi $.

So,
$\Pi_{n}, \bar{A_{i}}\rightarrow (B\rightarrow D) \vdash_{{\sf F_{[\rightarrow]}} }\!A,$
and so,
$\Pi_{n}, \bar{A_{i}}\rightarrow (B\rightarrow C), \bar{A_{i}}\rightarrow (C\rightarrow D) \vdash_{{\sf F_{[\rightarrow]}} }\!A.$
This is a contradiction. This kind of trivial step may be skipped in later proofs.

\noindent Now let $\vdash_{{\sf F_{[\rightarrow]}} }\!C\rightarrow D$ and $ C \in \Pi $ we must show that $ D \in \Pi$. This is obvious by the same mechanism .
\end{proof}

Also this proof is quite general.

\begin{dfn}
The \textbf{Canonical Model} $ \mathfrak{M}_{{\sf F_{[\rightarrow]}}}=\langle W_{{\sf F_{[\rightarrow]}}}, \Delta, R, \Vdash\rangle $ of $\mathsf{F_{[\rightarrow ]}}$ is defined by:
\begin{enumerate}
\item  $ \Delta $ is the empty theory,

\item  $ W_{{\mathsf F_{[\rightarrow]}}} $ is the set of all theories,

\item The canonical valuation is defined by
$ \Gamma\Vdash p$ iff $p \in \Gamma .$
\end{enumerate}
\end{dfn}

\begin{lem}\label{D1}
\em\textbf{(Truth lemma)} For each $ \Gamma \in  W_{{\sf F_{[\rightarrow]}}}$ and for every formula $  C $, $$ \Gamma\Vdash   C~{\rm iff}~ C \in \Gamma .$$
\end{lem}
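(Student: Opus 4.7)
My plan is to prove the Truth Lemma by induction on the complexity of $C$. The atomic case is immediate from the canonical valuation, so all the substance is in the implicational step $C=A\rightarrow B$. The $\Leftarrow$ direction is routine: if $A\rightarrow B\in\Gamma$ and $\Delta$ is a theory with $\Gamma R\Delta$ and $\Delta\Vdash A$, the induction hypothesis and the definition of $R$ force $B\in\Delta$, and a second appeal to the induction hypothesis yields $\Delta\Vdash B$; hence $\Gamma\Vdash A\rightarrow B$.

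For the $\Rightarrow$ direction I argue contrapositively: assuming $A\rightarrow B\notin\Gamma$, my aim is to produce a theory $\Delta$ with $A\in\Delta$, $B\notin\Delta$ and $\Gamma R\Delta$, for then the induction hypothesis gives $\Delta\Vdash A$ and $\Delta\not\Vdash B$, whence $\Gamma\not\Vdash A\rightarrow B$. I take $\Delta=\Sigma$, where $\Sigma$ is the smallest $[\rightarrow]$-theory containing $A$ that is closed under the rule ``if $X\rightarrow Y\in\Gamma$ and $X\in\Sigma$, then $Y\in\Sigma$''. Such a $\Sigma$ exists as the intersection of all theories with these two properties, or equivalently is generated from $\{A\}\cup\mathsf{F_{[\rightarrow]}}$ by iterating conditions 1(a) and 1(b) of Definition~\ref{r1} together with this $\Gamma$-rule. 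By construction $\Sigma$ is a theory satisfying $\Gamma R\Sigma$ and $A\in\Sigma$, so the entire argument reduces to showing $B\notin\Sigma$.

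This last claim is the main obstacle, and I would tackle it by establishing the stronger invariant $A\rightarrow C\in\Gamma$ for every $C\in\Sigma$, by induction on the generation of $\Sigma$. The base cases $C=A$ (Axiom~1) and $C\in\mathsf{F_{[\rightarrow]}}$ (Rule~3 applied to $\vdash C$) are immediate from $\mathsf{F_{[\rightarrow]}}\subseteq\Gamma$. For the $\Gamma$-rule step, if $X\rightarrow C\in\Gamma$ and $X\in\Sigma$, the induction hypothesis supplies $A\rightarrow X\in\Gamma$, and closure of $\Gamma$ under condition 1(a) with $n=0$ yields $A\rightarrow C\in\Gamma$; the 1(b) step is handled in the same way, since any provable $D\rightarrow C$ already belongs to $\Gamma$. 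For the 1(a) step on $\Sigma$ with parameter $\bar{A_n}$, the induction hypothesis provides the two premises each preceded by $A\rightarrow$, and applying condition 1(a) in $\Gamma$ at length $n+1$ delivers the desired conclusion. Since $A\rightarrow B\notin\Gamma$ by assumption, this invariant rules out $B\in\Sigma$, completing the induction and the lemma.
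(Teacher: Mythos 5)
Your proof is correct: the easy direction matches the paper's, and in the hard direction your construction does the job — the least theory $\Sigma$ containing $A$ and closed under the $\Gamma$-rule is indeed a theory with $\Gamma R\,\Sigma$, and your invariant ($C\in\Sigma$ implies $A\rightarrow C\in\Gamma$) goes through exactly as you say, using clause 1(a) of Definition~\ref{r1} at lengths $0$ and $n+1$ together with Rule~3 for theorems. The route differs from the paper's mainly in packaging rather than substance: the paper argues the hard direction directly, taking the \emph{explicit} successor $\Sigma=\left\lbrace E \mid A\rightarrow E\in\Gamma\right\rbrace$, verifying it is a theory and that $\Gamma R\,\Sigma$ and $A\in\Sigma$, and then letting the semantic hypothesis $\Gamma\Vdash A\rightarrow B$ force $B\in\Sigma$, i.e.\ $A\rightarrow B\in\Gamma$; you instead argue contrapositively with the \emph{minimal} successor generated from $\{A\}$, and your invariant is precisely the statement that this generated theory is contained in the paper's $\Sigma$. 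The closure verifications are step-for-step the same in both arguments; what the paper's choice buys is brevity, since with the explicit definition "$B\in\Sigma$ iff $A\rightarrow B\in\Gamma$" is immediate and no induction on the generation of $\Sigma$ is needed, whereas your minimal theory makes the witnessing of $\Gamma\nVdash A\rightarrow B$ slightly more self-contained at the cost of that extra induction.
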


\begin{proof}

By induction on $C$. The base case is trivial. For the induction step, it follows from the definition of $R$ that, if $A\rightarrow B \in    \Gamma$, then $  \Gamma\Vdash A\rightarrow B $. Conversely, suppose $  \Gamma\Vdash A\rightarrow B $ and  $ \Sigma = \left\lbrace  E ~|~A\rightarrow E \in \Gamma\right\rbrace $. First, we show that $ \Sigma \in W_{{\sf F_{[\rightarrow]}}}$. So we check the conditions of Definition \ref{r1}:

(a) Let $\bar{A_{n}}\rightarrow (B\rightarrow C) \in \Sigma$ and  $\bar{A_{n}}\rightarrow (C\rightarrow D) \in \Sigma$. Then $A\rightarrow (\bar{A_{n}}\rightarrow (B\rightarrow C) )\in \Gamma$ and  $A\rightarrow (\bar{A_{n}}\rightarrow (C\rightarrow D) )\in \Gamma$. By assumption, $ \Gamma $ is a theory, so $A\rightarrow (\bar{A_{n}}\rightarrow (B\rightarrow D) )\in \Gamma$. By definition,  $\bar{A_{n}}\rightarrow (B\rightarrow D) \in \Sigma$.

(b) Let $\vdash C\rightarrow  D$ and $ C \in \Sigma$. Then by definition of theory and  definition of $  \Sigma $, we have $C\rightarrow D \in \Gamma $ and $A\rightarrow C \in \Gamma $. Again and by clause (a) of Definition \ref{r1}, we have $A\rightarrow D \in \Gamma $. Hence,  by the definition of $  \Sigma $, we conclude $ D \in \Sigma$.

(c) Assume $ C\in  {\sf F_{[\rightarrow]}}$, then it is obvious that $ A\rightarrow C\in  {\sf F_{[\rightarrow]}}$. Then $ A\rightarrow C\in  \Gamma$ and by definition of $ \Sigma $, we conclude $ C\in \Sigma$.

 So,  $ \Sigma \in W_{{\sf F_{[\rightarrow]}}}$. In order to show that $  \Gamma R\,  \Sigma $, assume $ E\rightarrow F \in \Gamma $ and $ E \in \Sigma $. By definition of $ \Sigma $, we conclude $ A \rightarrow E \in \Gamma $. We know $ \Gamma $ is a theory, so by clause (a) of Definition \ref{r1}, we conclude that $ A \rightarrow F \in \Gamma $. Hence, $ F \in \Sigma $, that is $  \Gamma R  \Sigma $.
 We have also $ A \in \Sigma$, and hence $ \Sigma \Vdash A$ by the induction hypothesis. Now we have  $  \Gamma\Vdash A\rightarrow B $, $  \Gamma R  \Sigma $ and  $ \Sigma \Vdash A$. Thus  $ \Sigma \Vdash B$ holds, and by the induction hypothesis $ B \in \Sigma $, that is, $A\rightarrow B \in    \Gamma$.
\end{proof}

\begin{thm}\label{FF}
\em\textbf{(Completeness and Soundness Theorem)} $ \Sigma\vdash_{{\sf F_{[\rightarrow ]}} }\!A $ if and only if $ \Sigma\Vdash_{{\sf F_{[\rightarrow ]}}}\!A $.
\end{thm}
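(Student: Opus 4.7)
The plan is to prove the two directions separately. Completeness is where the canonical model machinery already developed pays off, while soundness requires induction on derivations with careful attention to the restrictions on Rules 2 and 3.

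For soundness, I would induct on the length of the derivation of $\Sigma\vdash A$. The cases $A\in\Sigma$ and Axiom 1 are immediate from the definition of $\Vdash$. Rule 4 is a direct pointwise computation: unwinding the nested implication at a world $w$, given any $R$-chain $wRw_{1}R\cdots Rw_{n}$ with $w_{i}\Vdash A_{i}$ and any $v$ with $w_{n}Rv$ and $v\Vdash B$, the two hypotheses yield first $v\Vdash C$ and then $v\Vdash D$, so $w\Vdash\bar{A_{n}}\rightarrow(B\rightarrow D)$. Rule 3 applies only when the premise $A$ is derived from no assumptions, so IH gives $\Vdash A$, which trivially makes $w\Vdash B\rightarrow A$ at every $w$.

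Rule 2 (weak $M\!P$) is the delicate case. Its side condition yields $\Vdash A\rightarrow B$ by IH, and IH on the other premise gives $\Sigma\Vdash A$; we want $\Sigma\Vdash B$. Fix a pointed model $(\M, w)$ with $w\Vdash\Sigma$, so $w\Vdash A$. The semantic clause for $\rightarrow$ references only successors of $w$, so direct appeal to $w\Vdash A\rightarrow B$ does not give $w\Vdash B$. The remedy is to embed $(\M, w)$ into a model $\M^{+}$ with a new predecessor $w^{+}$ for $w$ (serving as the new root, with arrows into the rest of $W$ as needed to satisfy the frame conditions of $\mathsf{F}$); adding a predecessor does not alter the evaluation of $\rightarrow$-formulas at worlds below, by a straightforward induction on formula complexity, so $w\Vdash_{\M^{+}} A$ still holds. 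Since $\Vdash A\rightarrow B$ forces $w^{+}\Vdash A\rightarrow B$ in $\M^{+}$ and $w^{+}Rw$, we get $w\Vdash_{\M^{+}} B$, i.e.\ $w\Vdash_{\M} B$.

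For completeness, the groundwork is already laid. Given $\Sigma\nvdash_{\mathsf{F_{[\rightarrow]}}}A$, Lemma~\ref{g} supplies a theory $\Pi\supseteq\Sigma$ with $A\notin\Pi$; since $\Pi\in W_{\mathsf{F_{[\rightarrow]}}}$, the Truth Lemma~\ref{D1} gives $\Pi\Vdash C$ for every $C\in\Sigma$ while $\Pi\nVdash A$, producing the required countermodel. The main obstacle is the soundness of weak $M\!P$: the semantic clause for $\rightarrow$ and the syntactic rule do not line up automatically, and one must verify that the model-extension step preserves whichever frame conditions for $\mathsf{F}$ are stipulated in Appendix~\ref{kripke}. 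Those conditions are sufficiently mild that the check goes through, but this is where the bookkeeping lies.
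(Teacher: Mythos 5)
Your proof is correct, and the substantive direction coincides with the paper's: given $\Sigma\nvdash_{\mathsf{F_{[\rightarrow]}}}A$, extend $\Sigma$ to a theory $\Pi$ with $A\notin\Pi$ by Lemma~\ref{g}, then use the Truth Lemma~\ref{D1} to read off $\Pi\Vdash\Sigma$ and $\Pi\nVdash A$ in the canonical model; that is exactly the paper's argument. Where you diverge is in the soundness direction, which the paper dismisses as easy: your induction is fine, and your cases for Axiom 1, Rules 3 and 4 are exactly the expected pointwise checks, but your treatment of weak $M\!P$ takes an unnecessary detour. You add a fresh predecessor $w^{+}$ to the pointed model and argue that truth at old worlds is unaffected; this works (the only frame condition of $\mathsf{F}$ is the existence of an omniscient root, which your new point satisfies by construction), but it is redundant, because every model in the class already contains an omniscient root $g$ with $gRw$ for all $w$. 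So from the side condition you get $\Vdash A\rightarrow B$, hence $g\Vdash A\rightarrow B$ in the very model at hand; with $gRw$ and $w\Vdash A$ this gives $w\Vdash B$ directly, with no model surgery and no preservation lemma to verify. This is also the conceptual point your longer route slightly obscures: rootedness is precisely what makes the restricted forms of Rules 2 and 3 sound, while the unrestricted forms would fail on non-reflexive frames.
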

\begin{proof} Soundness, i.e.\ left to right is easy. For the other direction let $ \Sigma\nvdash_{{\sf F_{[\rightarrow ]}}}\!A $. By Lemma \ref{g} there is a theory $ \Pi\supseteq\Sigma $ such that $ A \notin \Pi $. So, in the canonical model, $  \mathfrak{M}_{{\sf F_{[\rightarrow]}}}, \Pi\Vdash_{{\sf F_{[\rightarrow ]}} }\!\Sigma $ and $  \mathfrak{M}_{{\sf F_{[\rightarrow]}}},  \Pi\nVdash_{{\sf F_{[\rightarrow ]}} }\!A $, since $ A \notin \Pi $. So, $ \Sigma\nVdash_{{\sf F_{[\rightarrow ]}} }\!A$.
\end{proof}

Again this proof is quite general.

\begin{prop}\label{F12}
A formula $ A \in \mathcal{L}_{[\rightarrow]} $ is provable in $ {\sf F_{[\rightarrow]}}$ iff it is provable in ${\sf F}  $.
\end{prop}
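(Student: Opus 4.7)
The plan is to argue both directions via soundness and completeness, rather than by a direct syntactic translation. The forward direction ($\vdash_{\mathsf{F}_{[\rightarrow]}}\! A \Rightarrow \vdash_{\mathsf{F}}\! A$) is essentially bookkeeping: check that each of the four schemes/rules of Theorem~\ref{CFrr} is already derivable in $\mathsf{F}$. Axiom~1 and Rule~2 are immediate; Rule~3 is the standard ``trivialisation'' rule of $\mathsf{F}$; and Rule~4 is the nested transitivity rule which is a derived rule of $\mathsf{F}$ (in $\mathsf{F}$ one has $(B\rightarrow C)\wedge(C\rightarrow D)\rightarrow(B\rightarrow D)$ and one may distribute this over the antecedent stack $\bar A_n$ using the necessitation-like rule and the basic rules of $\mathsf{F}$). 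Weak $\mathit{MP}$ is also a derived notion of $\mathsf{F}$, so every $\mathsf{F}_{[\rightarrow]}$-proof lifts to an $\mathsf{F}$-proof.

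For the converse, the key semantic observation is that truth of a $\rightarrow$-formula at a world of a rooted subintuitionistic Kripke model $\langle W,g,R,V\rangle$ depends only on the underlying frame $\langle W,g,R\rangle$ and the restriction of $V$ to the propositional variables occurring in the formula. Thus, if $A\in\mathcal{L}_{[\rightarrow]}$ and $\nvdash_{\mathsf{F}_{[\rightarrow]}}\! A$, Theorem~\ref{FF} yields a rooted $[\rightarrow]$-Kripke model $\mathfrak{M}$ and a world $w$ with $\mathfrak{M},w\nVdash A$. Because the class of rooted subintuitionistic frames used for $\mathsf{F}_{[\rightarrow]}$ (see Appendix~\ref{kripke}) coincides with the frame class for $\mathsf{F}$, one may extend $\mathfrak{M}$ to a full $\mathsf{F}$-Kripke model $\mathfrak{M}^{+}$ by keeping $V$ on propositional variables and computing $\Vdash$ inductively for all connectives. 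An easy induction on $A$ (the induction is trivial since $A$ uses only $\rightarrow$) shows $\mathfrak{M}^{+},w\nVdash A$. By completeness of $\mathsf{F}$ with respect to this frame class (established in the cited previous work), $\nvdash_{\mathsf{F}}\! A$.

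The main obstacle is not deep but requires care: one must be sure that the frame classes on the two sides really agree, so that an $[\rightarrow]$-model can be promoted to a full $\mathsf{F}$-model without changing the frame. This amounts to verifying that the completeness theorem for $\mathsf{F}_{[\rightarrow]}$ proved above (which implicitly works with \emph{all} rooted subintuitionistic frames) and the previously established completeness theorem for $\mathsf{F}$ refer to the same frame conditions. Once this is confirmed by consulting Appendix~\ref{kripke}, both directions reduce to one-line semantic arguments, and the conservativity of $\mathsf{F}$ over $\mathsf{F}_{[\rightarrow]}$ on the language $\mathcal{L}_{[\rightarrow]}$ follows.
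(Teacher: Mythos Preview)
Your proposal is correct and follows essentially the same route as the paper: the forward direction is syntactic bookkeeping, and the converse uses the fact that any rooted $[\rightarrow]$-Kripke countermodel for $A$ (in the paper, specifically the canonical model of $\mathsf{F}_{[\rightarrow]}$) is already a Kripke model for $\mathsf{F}$, so $A$ fails in $\mathsf{F}$ by soundness. Your talk of ``extending'' $\mathfrak{M}$ to $\mathfrak{M}^{+}$ is slightly roundabout---no extension is needed, since the frame and valuation are unchanged and the additional connectives are simply interpreted by the standard clauses---but this is cosmetic.
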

\begin{proof}
The proof from left to right is obvious, because in ${\sf  F}  $ we have the axioms and rules of ${\sf F_{[\rightarrow]}}$. For the other direction, assume $ A $ is not provable in ${\sf F_{[\rightarrow]}}$. So, in the canonical model of $ {\sf F_{[\rightarrow]}}$ we have $ A\notin \Delta $. On the other hand it is easy to see that the canonical model of $ {\sf F_{[\rightarrow]}}$  is a Kripke model for ${\sf F}  $. Hence we can conclude that $ A $ is not provable in ${\sf F}  $.
\end{proof}

And this final proof is quite general as well. We need the next rules for {\sf F} in the following.

\begin{lem}\label{4}
The following rules hold in the logic $ {\sf F_{[\rightarrow]}}  $ (applied when there are no assumptions):
\begin{enumerate}
\item[(a)]$  \dfrac{A\rightarrow B}{(B\rightarrow C)\rightarrow (A\rightarrow C)}$,
\item[(b)]$ \dfrac{A\rightarrow B}{(C\rightarrow A)\rightarrow (C\rightarrow B)}$.
\end{enumerate}
\end{lem}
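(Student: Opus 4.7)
My plan is to handle both rules as short direct derivations using only rule 3 together with a single application of rule 4 at depth $n=1$, with Axiom 1 supplying one of the two premises.

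For part (a), starting from a proof of $A\rightarrow B$ with no assumptions, I would first apply rule 3 to prepend the antecedent $B\rightarrow C$, obtaining $(B\rightarrow C)\rightarrow (A\rightarrow B)$. Axiom 1 then gives $(B\rightarrow C)\rightarrow (B\rightarrow C)$ outright. Now rule 4 in the case $n=1$ takes the premises $A_{1}\rightarrow (B'\rightarrow C')$ and $A_{1}\rightarrow (C'\rightarrow D')$ to $A_{1}\rightarrow (B'\rightarrow D')$; instantiating $A_{1}$ as $B\rightarrow C$, $B'$ as $A$, $C'$ as $B$, $D'$ as $C$ matches the two formulas just derived and yields $(B\rightarrow C)\rightarrow (A\rightarrow C)$, as required.

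For part (b) the move is symmetric. From $A\rightarrow B$ I apply rule 3 with antecedent $C\rightarrow A$ to obtain $(C\rightarrow A)\rightarrow (A\rightarrow B)$, and Axiom 1 supplies $(C\rightarrow A)\rightarrow (C\rightarrow A)$. Invoking rule 4 with $n=1$, this time instantiating $A_{1}$ as $C\rightarrow A$, $B'$ as $C$, $C'$ as $A$, $D'$ as $B$, produces $(C\rightarrow A)\rightarrow (C\rightarrow B)$.

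I do not anticipate a real obstacle here; the only point worth checking is that rule 3 and rule 4 are both legitimate in the assumption-free setting demanded by the lemma, which is exactly the regime in which the axiom system was presented. The small subtlety is simply to recognize that rule 4 with $n=1$ is the right instance, and that Axiom 1 on its own already furnishes the reflexivity premise $(X\rightarrow Y)\rightarrow (X\rightarrow Y)$ needed to activate it.
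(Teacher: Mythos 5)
Your proof is correct and follows essentially the same route as the paper: for (a) it is line-for-line the paper's derivation (rule 3 to get $(B\rightarrow C)\rightarrow(A\rightarrow B)$, Axiom 1 for $(B\rightarrow C)\rightarrow(B\rightarrow C)$, then rule 4 with $n=1$), and for (b) you carry out the symmetric argument that the paper leaves to the reader as ``similar.''
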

\begin{proof} We provide the proof for (a); the proof for (b) is similar.
\begin{enumerate}
\item $A\rightarrow B  $ ~~~~~~~~~~Assumption as a theorem
\item $ (B\rightarrow C)\rightarrow (A\rightarrow B ) $ ~~By 1 and rule 3
\item $  (B\rightarrow C)\rightarrow (B\rightarrow C)$ ~~Axiom 1

\item $ (B\rightarrow C)\rightarrow (A\rightarrow C) $ ~~By 2, 3 and rule 4
\end{enumerate}
\end{proof}

\subsection{The implication-conjunction fragment of {\sf F}}
In this subsection we prove strong completeness for a system $ {\sf F_{[\rightarrow, \wedge]}}  $ that characterizes the implication-conjunction fragment of {\sf F}. Frames and models for this fragment are the obvious extension of the ones in the previous subsection with the usual clauses for $\wedge$ added as are the definitions of validity and valid consequence.







\begin{thm} The axiomatization of $ {\sf F_{[\rightarrow, \wedge]}}  $ is obtained by adding to the axioms and rules 1-3 (see Thm.\ \ref{CFrr}) in $\mathcal{L}_{\rightarrow,\wedge}$ the axioms and rules
\begin{enumerate}
\item[5. ] $A\wedge B \rightarrow A$
\item[6. ] $ A\wedge B \rightarrow B$
\item[7. ] $\dfrac{A,B}{A\wedge B}$
\item[8. ] $(A\rightarrow B)\wedge(A\rightarrow C)\rightarrow(A\rightarrow B\wedge C)$
\item[9. ] $(B\rightarrow C)\wedge(C\rightarrow D)\rightarrow(B\rightarrow D)$
\end{enumerate}
\end{thm}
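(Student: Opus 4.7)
The plan is to adapt the canonical-model argument given for $\mathsf{F_{[\rightarrow]}}$ in the previous subsection, treating $\wedge$ by the obvious moves and using axioms 8 and 9 as the $\wedge$-internalized substitutes for rule 4. Soundness is a direct check: axioms 5, 6 and rule 7 are validated by the standard clauses for $\wedge$; for axiom 8, if $w\Vdash(A\rightarrow B)\wedge(A\rightarrow C)$ and $wRv$ with $v\Vdash A$, then $v\Vdash B$ and $v\Vdash C$, hence $v\Vdash B\wedge C$; axiom 9 is analogous, composing the two implications at every $R$-successor.

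For completeness I would first formulate a weak Deduction Theorem for this enlarged system along the lines of Theorem~\ref{lli}, with one extra induction case for rule 7: from $A\vdash B_{1}$ and $A\vdash B_{2}$ the hypothesis gives $\vdash A\rightarrow B_{1}$ and $\vdash A\rightarrow B_{2}$, so $\vdash(A\rightarrow B_{1})\wedge(A\rightarrow B_{2})$ by rule 7, and then axiom 8 plus weak $M\!P$ delivers $\vdash A\rightarrow(B_{1}\wedge B_{2})$. I would then define an $\mathsf{F_{[\rightarrow,\wedge]}}$-theory $\Delta$ by the three clauses: (a) $A,B\in\Delta\Rightarrow A\wedge B\in\Delta$; (b) $\vdash A\rightarrow B$ and $A\in\Delta\Rightarrow B\in\Delta$; (c) $\mathsf{F_{[\rightarrow,\wedge]}}\subseteq\Delta$. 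The closure condition (a) of Definition~\ref{r1} is no longer needed as a primitive clause, because from $\bar{A_{n}}\rightarrow(B\rightarrow C),\bar{A_{n}}\rightarrow(C\rightarrow D)\in\Delta$ we can first form their conjunction by (a), iterate axiom 8 under (b) to push $\wedge$ inside, and finally apply axiom 9 under (b) to obtain $\bar{A_{n}}\rightarrow(B\rightarrow D)$. The empty-theory proposition and the Lindenbaum-style Lemma~\ref{g} carry over verbatim, since the enumeration argument only appeals to the abstract closure properties.

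The canonical model $\mathfrak{M}_{\mathsf{F_{[\rightarrow,\wedge]}}}$ is defined with the same $R$ and the natural valuation; the Truth Lemma is proved by induction on the formula. The conjunction case is immediate from (a), clauses 5, 6 via (b), and the induction hypothesis. The implication case reuses the construction $\Sigma=\{E\mid A\rightarrow E\in\Gamma\}$, and the main task, and the only genuinely new technical point, is to show that $\Sigma$ is an $\mathsf{F_{[\rightarrow,\wedge]}}$-theory and that $\Gamma R\,\Sigma$. The key calculations are: if $E,F\in\Sigma$ then $A\rightarrow E,A\rightarrow F\in\Gamma$, so $(A\rightarrow E)\wedge(A\rightarrow F)\in\Gamma$ by (a), and axiom 8 plus (b) give $A\rightarrow(E\wedge F)\in\Gamma$, i.e.\ $E\wedge F\in\Sigma$; for closure under provable implication and for verifying $\Gamma R\,\Sigma$, given $E\rightarrow F$ (either provable, hence in $\Gamma$ by (c), or already in $\Gamma$) and $A\rightarrow E\in\Gamma$, form $(A\rightarrow E)\wedge(E\rightarrow F)\in\Gamma$ by (a) and apply axiom 9 via (b) to conclude $A\rightarrow F\in\Gamma$.

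The main obstacle is precisely this last bookkeeping: checking that the two-premise, nested transitivity condition from Definition~\ref{r1}(a), which was directly given by rule 4 in the $[\rightarrow]$-fragment, is now recovered entirely from axioms 8 and 9 together with $\wedge$-introduction. Once the Truth Lemma is in hand, completeness and soundness follow as in Theorem~\ref{FF}, and the analogue of Proposition~\ref{F12} showing that the canonical model is a Kripke model for $\mathsf{F}$ is again routine, giving $\mathsf{F_{[\rightarrow,\wedge]}}=\{A\in\mathcal{L}_{\rightarrow,\wedge}\mid\vdash_{\mathsf{F}} A\}$.
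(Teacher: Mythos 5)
Your proposal follows essentially the same route as the paper: soundness by direct checking, a weak deduction theorem, $[\rightarrow,\wedge]$-theories, a Lindenbaum lemma carried over, and the canonical model with the same $\Sigma=\{E\mid A\rightarrow E\in\Gamma\}$ construction, with axioms 8 and 9 plus rule 7 doing the work that rule 4 did in the purely implicational case; this is correct in substance. Two small points of comparison. First, your weak deduction theorem needs \emph{two} new induction cases, not one: besides rule 7, the weak-$M\!P$ case can no longer be discharged ``by rule 4'' as in Theorem \ref{lli}, since rule 4 is not part of the present system; the paper handles it exactly by the pattern you use later for $\Gamma R\,\Sigma$, namely from $\vdash A\rightarrow B$ and $\vdash B\rightarrow C$ form $(A\rightarrow B)\wedge(B\rightarrow C)$ by rule 7 and apply axiom 9 with $M\!P$, so this is a one-line fix rather than a real gap. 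Second, you spend effort re-deriving clause (a) of Definition \ref{r1} inside theories, whereas the paper simply omits that clause from the definition of $[\rightarrow,\wedge]$-theories and observes in the Truth Lemma that it is no longer needed (the $\Sigma$-verification goes through with conjunction closure, closure under provable implications, and axiom 9, exactly as in your own key calculations); your recovery is sound but the step ``iterate axiom 8 under (b)'' tacitly uses derived prefixing and transitivity of provable implications, which should be noted as obtainable from rule 3, rule 7, and axioms 8 and 9.
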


We see that the introduction of conjunction allows us to do without an infinite rule in the implication-conjunction fragment. This illustrates the fact that this fragment is often easier to describe than the purely implicational fragment.

\noindent The definitions of $\Gamma\vdash_{{\sf F_{[\rightarrow, \wedge]}} }\!A$ and $\Gamma\Vdash_{{\sf F_{[\rightarrow, \wedge]}} }\! A$ are straightforward (the new rule 7 has no restrictions).The ${\sf F}_{[\rightarrow, \wedge]}$-theories are then defined in the obvious way, as is the empty theory.
The definition of $R$ (Def.\ \ref{r1}) for theories is simply extended to  ${[\rightarrow, \wedge]}$-theories, and as in the previous subsection we have that the empty theory is a theory.







\begin{thm}
\em\textbf{(Weak Deduction Theorem)} $A\vdash_{{\sf F_{[\rightarrow, \wedge]}} }\!B  $ if and only if $\vdash_{{\sf F_{[\rightarrow, \wedge]}} }\! A\rightarrow B $.
\end{thm}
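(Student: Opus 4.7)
The plan is to mirror the proof of Theorem \ref{lli}, with axioms 8 and 9 together with rule 7 taking over the role that the infinitary rule 4 played in the pure implicational system. The $\Leftarrow$ direction is again immediate: from $\vdash_{\mathsf{F_{[\rightarrow,\wedge]}}} A\rightarrow B$ (no assumptions) and the new assumption $A$, one application of weak $M\!P$ delivers $A\vdash_{\mathsf{F_{[\rightarrow,\wedge]}}} B$.

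For $\Rightarrow$, I would induct on the length of the derivation of $A\vdash_{\mathsf{F_{[\rightarrow,\wedge]}}} B$. The base cases are exactly as before: if $B$ is a theorem of $\mathsf{F_{[\rightarrow,\wedge]}}$ (in particular, if $B$ is one of axioms 1, 5, 6, 8, 9), then rule 3 yields $\vdash A\rightarrow B$; and if $B=A$, then axiom 1 does the job. Rule 3 itself requires no separate inductive case, since by definition it is applied only in assumption-free subderivations, so any formula it introduces is a theorem and is handled by the first base case.

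The inductive step for weak $M\!P$ is where the arrangement departs from Theorem \ref{lli}. Suppose $A\vdash B$ arose from $A\vdash C$ and a theorem $\vdash C\rightarrow B$. By the induction hypothesis, $\vdash A\rightarrow C$. Using rule 7 (which has no restriction) one assembles $\vdash (A\rightarrow C)\wedge(C\rightarrow B)$, and then axiom 9 together with weak $M\!P$ yields $\vdash A\rightarrow B$. The new case is rule 7 applied with the assumption $A$: if $A\vdash B\wedge C$ was obtained from $A\vdash B$ and $A\vdash C$, then the induction hypothesis gives $\vdash A\rightarrow B$ and $\vdash A\rightarrow C$; rule 7 produces $\vdash (A\rightarrow B)\wedge(A\rightarrow C)$, and axiom 8 with weak $M\!P$ gives $\vdash A\rightarrow (B\wedge C)$.

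The conceptual point worth noting in the write-up is that axioms 8 and 9, combined with the now-unrestricted rule 7, internalize the content of the infinitary schema (rule 4) of Theorem \ref{CFrr} into two single formulas; this is the reason a finite axiomatization becomes available once $\wedge$ is present. The only technical check along the way, analogous to the one in Theorem \ref{lli}, is that every application of weak $M\!P$ introduced in the constructed derivations of $\vdash A\rightarrow B$, $\vdash A\rightarrow C$, etc.\ has an assumption-free major premise; this is clear because axioms 8 and 9 are theorems and rule 7 is applied to assumption-free formulas at those points, so no side condition is ever violated.
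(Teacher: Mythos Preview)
Your proof is correct and follows essentially the same approach as the paper's: the paper's argument also reduces to handling only the inductive steps for rule 7 and weak $M\!P$, using rule 7 together with axiom 8 for the conjunction case and rule 7 together with axiom 9 for the $M\!P$ case, exactly as you do. Your additional remarks on the base cases, the $\Leftarrow$ direction, and the role of axioms 8 and 9 in replacing the infinitary rule 4 are accurate elaborations of what the paper leaves implicit.
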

\begin{proof}
We just need to consider the induction step for rule 7 and for $M\!P$. If rule 7 is applied to obtain $B\wedge C$ from $B$ and $C$, we have, by the induction hypthesis,  $\vdash_{{\sf F_{[\rightarrow, \wedge]}} }\!A\rightarrow B  $ and  $\vdash_{{\sf F_{[\rightarrow, \wedge]}} }\!A\rightarrow C  $. Applying rule 7 we then get  $\vdash_{{\sf F_{[\rightarrow, \wedge]}} }\!(A\rightarrow B)\wedge(A\rightarrow C)  $. By axiom 8 and $M\!P$ we then obtain the conclusion $\vdash_{{\sf F_{[\rightarrow, \wedge]}} }\!A\rightarrow B\wedge C  $.

If $M\!P$ has been applied to obtain $A\vdash C$ from $A\vdash B$ and $\vdash B\rightarrow C$, then we have, by induction,
$\vdash A\rightarrow B$, whence by rule 7, $\vdash (A\rightarrow B)\wedge (B\rightarrow C)$. By axiom 9, and $M\!P$ we then obtain $\vdash A\rightarrow C$.
\end{proof}
  


We have the same lemma and definition of canonical model as in the previous section.

\begin{lem}
\label{g1}
 If $ \Sigma\nvdash_{{\sf F_{[\rightarrow, \wedge]}} }\!A$, then there is a $ \Pi\supseteq \Sigma $  such that $ \Pi $ is a $ {[\rightarrow, \wedge]} $-theory and  $ A \notin \Pi $.
\end{lem}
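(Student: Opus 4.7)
The plan is to mimic the Lindenbaum-style construction of Lemma~\ref{g}, adapted to the expanded language $\mathcal{L}_{\rightarrow,\wedge}$. Enumerate the formulas of $\mathcal{L}_{\rightarrow,\wedge}$ with infinitely many repetitions as $B_0, B_1, \ldots$, set $\Pi_0 = \Sigma \cup \mathsf{F}_{[\rightarrow,\wedge]}$, and put $\Pi_{n+1} = \Pi_n \cup \{B_n\}$ whenever $\Pi_n, B_n \nvdash_{\mathsf{F}_{[\rightarrow,\wedge]}} A$, and $\Pi_{n+1} = \Pi_n$ otherwise. Let $\Pi$ be the union. By construction $\Sigma \subseteq \Pi$ and, exactly as in Lemma~\ref{g}, $A \notin \Pi$. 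What remains is to verify that $\Pi$ is a $[\rightarrow,\wedge]$-theory.

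The clauses inherited from the $[\rightarrow]$-theory template, namely closure under provable implication and containment of $\mathsf{F}_{[\rightarrow,\wedge]}$, are handled by the same mechanism as in Lemma~\ref{g}: if $\vdash C\rightarrow D$ and $C \in \Pi$ but $D$ were permanently rejected at some stage $n$ with $B_n = D$, then $\Pi_n, D \vdash A$; yet $\Pi_n \vdash D$ by weak $M\!P$ (since $C \in \Pi_n$ for $n$ large enough, because the enumeration has infinite repetitions), giving $\Pi_n \vdash A$ and the desired contradiction.

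The new ingredient is closure under rule~7: if $C, D \in \Pi$ then $C\wedge D \in \Pi$. I would argue by contradiction. Using the infinite repetition in the enumeration, pick an index $n$ with $B_n = C\wedge D$ that is large enough that $C, D \in \Pi_n$. If $C\wedge D$ was not added at this stage, then $\Pi_n, C\wedge D \vdash A$. Since $C, D \in \Pi_n$, rule~7 (which has no restrictions) gives $\Pi_n \vdash C\wedge D$, and composing the two derivations yields $\Pi_n \vdash A$, contradicting $A \notin \Pi$. Note that axiom~9 together with rule~7 and closure under provable implication subsumes the infinite clause~(a) of Def.~\ref{r1}, so no analogous infinite condition has to be verified separately.

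There is no serious obstacle here. The argument is the general template of Lemma~\ref{g} with a single additional closure case for the conjunction introduction rule; the only bookkeeping point is the standard infinite-repetition trick in the enumeration, which ensures that whenever we wish to place $C\wedge D$ into $\Pi$, we can find a stage $n$ at which both $C$ and $D$ have already been admitted. As with the other generic proofs in this section, the pattern will carry over with minimal change to the richer fragments treated later.
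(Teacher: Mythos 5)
Your proposal is correct and follows essentially the same route as the paper, which states this lemma without proof by appealing to the generality of the Lindenbaum construction in Lemma~\ref{g}. Your added verification of closure under the unrestricted conjunction rule (via the infinite-repetition trick and composing derivations) is exactly the one new case that construction needs in the $[\rightarrow,\wedge]$ setting.
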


\begin{dfn}The definition of the \textbf{Canonical Model} $  \mathfrak{M}_{{\sf F_{[\rightarrow, \wedge]}}} $ is  as that of $ \mathfrak{M}_{{\sf F_{[\rightarrow]}}}$, the only difference being that instead of $ {[\rightarrow]} $-theory, we have ${[\rightarrow, \wedge]} $-theory.


\end{dfn}

\begin{lem}\label{D2}
\em\textbf{(Truth lemma)} For each $ \Gamma \in  W_{{\sf F_{[\rightarrow]}}}$ and for every formula $  C $, $$ \Gamma\Vdash   C~{\rm iff}~ C \in \Gamma .$$
\end{lem}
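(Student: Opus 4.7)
The plan is to proceed by induction on the complexity of $C$, following the template of Lemma \ref{D1}, with one extra clause for conjunction. The propositional variable case is immediate from the canonical valuation, so I would concentrate on the two inductive steps, pointing out that the definition of $[\rightarrow,\wedge]$-theory includes closure under rule 7 in addition to the clauses of Definition \ref{r1}.

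For the conjunction step $C = A\wedge B$: the forward direction $A\wedge B \in \Gamma \Rightarrow \Gamma\Vdash A\wedge B$ uses axioms 5 and 6 together with closure of $\Gamma$ under provable implication to extract $A\in\Gamma$ and $B\in\Gamma$; the induction hypothesis then gives $\Gamma\Vdash A$ and $\Gamma\Vdash B$, so $\Gamma\Vdash A\wedge B$. Conversely, if $\Gamma\Vdash A\wedge B$ then by induction $A,B\in\Gamma$, and closure of $\Gamma$ under rule 7 yields $A\wedge B\in\Gamma$.

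For the implication step $C = A\rightarrow B$, one direction (if $A\rightarrow B\in\Gamma$ then $\Gamma\Vdash A\rightarrow B$) is immediate from the definition of $R$ together with the induction hypothesis, exactly as in Lemma \ref{D1}. For the substantive direction I would mimic the earlier construction, setting $\Sigma = \{E \mid A\rightarrow E \in \Gamma\}$, and verifying in turn that $\Sigma$ is a $[\rightarrow,\wedge]$-theory, that $\Gamma R\,\Sigma$, and that $A\in\Sigma$ (using $\vdash A\rightarrow A$). Then $\Sigma\Vdash A$ by induction, $\Sigma\Vdash B$ by the semantic hypothesis on $\Gamma$, and $B\in\Sigma$ by induction, which means $A\rightarrow B\in\Gamma$ by the definition of $\Sigma$.

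The routine checks on $\Sigma$ (clauses (a)--(c) of Definition \ref{r1}) transfer verbatim from the proof of Lemma \ref{D1}. The genuinely new work is twofold, and this is the main obstacle: closure of $\Sigma$ under conjunction, and verification of $\Gamma R\,\Sigma$. For conjunction closure, from $E_1,E_2\in\Sigma$ I get $A\rightarrow E_1,\,A\rightarrow E_2 \in \Gamma$; rule 7 places their conjunction in $\Gamma$, and axiom 8 together with closure under provable implication gives $A\rightarrow(E_1\wedge E_2)\in\Gamma$, whence $E_1\wedge E_2\in\Sigma$. For $\Gamma R\,\Sigma$, given $E\rightarrow F\in\Gamma$ and $E\in\Sigma$ (so $A\rightarrow E\in\Gamma$), rule 7 puts $(A\rightarrow E)\wedge(E\rightarrow F)$ in $\Gamma$, and axiom 9 with closure produces $A\rightarrow F\in\Gamma$, i.e.\ $F\in\Sigma$. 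Thus axioms 8 and 9 play here precisely the role that the infinitary rule 4 played in the purely implicational fragment, which is exactly the simplification advertised in the remark following the axiomatization.
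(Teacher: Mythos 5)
Your proposal is correct and follows essentially the same route as the paper: induction on $C$, the conjunction case via axioms 5--6 and closure under rule 7, and the implication case via the same set $\Sigma=\{E \mid A\rightarrow E\in\Gamma\}$, with axioms 8 and 9 (plus rule 7) taking over the role of the infinitary rule 4. In fact your write-up spells out the $\Gamma R\,\Sigma$ and conjunction-closure checks that the paper's own proof only gestures at with the remark that case (a) is no longer necessary.
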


\begin{proof}
By induction on $C$.
We first treat conjunction.

$ (C:=A\wedge B) $ Let $ \Gamma \in  W_{{\sf F_{  [\rightarrow , \wedge]}}}$ and $  \Gamma\Vdash A\wedge B  $ then $  \Gamma\Vdash A  $ and $  \Gamma\Vdash B  $. By the induction hypothesis $ A \in \Gamma $ and $ B \in \Gamma $. $ \Gamma $ is a ${[\rightarrow, \wedge]}$-theory, so $ A\wedge B \in \Gamma $.

Now let $ A\wedge B \in \Gamma $. We have $\vdash_{{\sf F_{[\rightarrow , \wedge]}} }\!A\wedge B\rightarrow A $ and $ \vdash_{{\sf F_{[\rightarrow , \wedge]}} }\!A\wedge B\rightarrow B$, hence by definition  of ${[\rightarrow, \wedge]}$-theory we conclude that  $ A \in \Gamma $ and $ B \in \Gamma $. By the induction hypothesis $  \Gamma\Vdash A  $ and $  \Gamma\Vdash B  $ so $  \Gamma\Vdash A\wedge B  $.

For implication we just have to check the converse direction of the induction.
Again we define $  \Gamma\Vdash A\rightarrow B $, $ \Sigma = \left\lbrace  E ~|~A\rightarrow E \in \Gamma\right\rbrace $, and we see that the proof is easier than in the purely implicational case, case (a) is no longer necessary.
\end{proof}






\begin{thm}
\em\textbf{(Completeness and soundness Theorem)} $ \Sigma\vdash_{{\sf F_{[\rightarrow , \wedge]}} } A $ if and only if $ \Sigma\Vdash_{{\sf F_{[\rightarrow , \wedge]}} } A $.
\end{thm}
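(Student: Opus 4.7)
The plan is to mirror exactly the strategy used for Theorem \ref{FF}, since all the necessary machinery (the weak deduction theorem, Lemma \ref{g1}, the canonical model, and the truth lemma \ref{D2}) has already been set up for $\mathsf{F}_{[\rightarrow,\wedge]}$. Indeed, the author explicitly remarked after Theorem \ref{FF} that ``this proof is quite general'', so the aim is to reuse it with the obvious adjustments for the conjunction connective.

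For soundness ($\Rightarrow$), I would proceed by induction on the length of a derivation of $\Sigma \vdash_{\mathsf{F}_{[\rightarrow,\wedge]}} A$. The axioms 1, 5, 6, 8, 9 are each verified to hold in every rooted $[\rightarrow,\wedge]$-Kripke model by a direct unfolding of the forcing clauses for $\rightarrow$ and $\wedge$; for example, axiom 8 uses that if $w \Vdash (A\to B)\wedge (A\to C)$ and $wRv$ with $v\Vdash A$, then $v\Vdash B$ and $v\Vdash C$, hence $v\Vdash B\wedge C$. Rules 2 (weak $M\!P$), 3, and 4 preserve validity exactly as in the implicational fragment, and the new rule 7 is trivial from the clause for $\wedge$. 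The restriction on weak $M\!P$ is what keeps the argument honest, just as before.

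For completeness ($\Leftarrow$), I would argue by contraposition. Suppose $\Sigma \nvdash_{\mathsf{F}_{[\rightarrow,\wedge]}} A$. By Lemma \ref{g1} there is a $[\rightarrow,\wedge]$-theory $\Pi\supseteq \Sigma$ with $A\notin \Pi$. In the canonical model $\mathfrak{M}_{\mathsf{F}_{[\rightarrow,\wedge]}}$, the point $\Pi$ is a world (all theories are), and the truth lemma \ref{D2} gives, for every $B\in \Sigma$, that $\mathfrak{M}_{\mathsf{F}_{[\rightarrow,\wedge]}},\Pi\Vdash B$, while $\mathfrak{M}_{\mathsf{F}_{[\rightarrow,\wedge]}},\Pi\nVdash A$. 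Hence $\Sigma \nVdash_{\mathsf{F}_{[\rightarrow,\wedge]}} A$, as required. One should also note (as in Proposition~\ref{F12}) that the canonical frame is a legitimate Kripke frame for the fragment, which in the present setting simply amounts to observing that the empty theory is a $[\rightarrow,\wedge]$-theory and serves as the distinguished root $g$.

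The only real obstacle is conceptual rather than technical: one must make sure that the shift from $[\rightarrow]$-theories to $[\rightarrow,\wedge]$-theories does not break Lemma \ref{g1} or the construction of the set $\Sigma=\{E\mid A\to E\in \Gamma\}$ in the truth lemma. But as noted in the proof of Lemma \ref{D2}, the presence of axiom 9 (together with axiom 8 for the conjunction clause) actually streamlines the $\rightarrow$-case so that clause~(a) in the older style argument is no longer needed, so the whole argument goes through with no surprises.
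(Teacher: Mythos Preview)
Your proposal is correct and follows exactly the approach the paper intends: the theorem is stated without proof there, relying on the general pattern established in Theorem~\ref{FF} together with Lemma~\ref{g1} and the truth lemma~\ref{D2}. One minor slip: rule~4 is not part of the $\mathsf{F}_{[\rightarrow,\wedge]}$ axiomatization (only rules 1--3 are carried over, with axioms 8 and 9 taking its place), so you should drop it from the soundness check---though verifying its validity does no harm.
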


\begin{prop}
A formula $ A \in \mathcal{L}_{[\rightarrow, \wedge]} $ is provable in $ {\sf F_{[\rightarrow, \wedge]}}$ iff it is provable in ${\sf F}  $.
                                                  \end{prop}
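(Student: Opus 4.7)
The plan is to mirror the proof of Proposition \ref{F12} for the richer fragment. The left-to-right direction is immediate: the axioms $1, 5, 6, 8, 9$ and the rules $2, 3, 4, 7$ that axiomatize $\mathsf{F_{[\rightarrow,\wedge]}}$ are all among the axioms/rules (or easily derivable principles) of $\mathsf{F}$, so every $\mathsf{F_{[\rightarrow,\wedge]}}$-proof transports verbatim to an $\mathsf{F}$-proof.

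For the converse, I would argue by contraposition. Suppose $A\in\mathcal{L}_{[\rightarrow,\wedge]}$ is not provable in $\mathsf{F_{[\rightarrow,\wedge]}}$. By the Completeness Theorem for $\mathsf{F_{[\rightarrow,\wedge]}}$ proved just above, there is a $[\rightarrow,\wedge]$-theory $\Pi$ in the canonical model $\mathfrak{M}_{\mathsf{F_{[\rightarrow,\wedge]}}}$ with $\Pi\nVdash A$. I would then \emph{extend} this model into an $\mathsf{F}$-Kripke model by keeping the same frame $\langle W_{\mathsf{F_{[\rightarrow,\wedge]}}}, \Delta, R\rangle$ and the same valuation on propositional variables, and interpreting the missing connectives $\vee$ and $\bot$ according to the standard $\mathsf{F}$-clauses from Appendix~\ref{kripke}. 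Since the forcing clauses for $\rightarrow$ and $\wedge$ are identical in the two semantics, the Truth Lemma \ref{D2} continues to guarantee $\Pi\nVdash A$ in this extended model.

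The key step is to check that the extended structure is indeed a Kripke model for $\mathsf{F}$, i.e.\ that the canonical frame satisfies the frame conditions of $\mathsf{F}$ listed in Appendix~\ref{kripke}. These conditions are purely implicational in character (they are the relational counterparts of axiom $1$ and rules $3$ and $4$) and were already verified in the canonical construction for $\mathsf{F_{[\rightarrow]}}$ underlying Proposition~\ref{F12}; the same verifications go through here because the $[\rightarrow,\wedge]$-theories that populate $W_{\mathsf{F_{[\rightarrow,\wedge]}}}$ still satisfy condition~(a) of Definition~\ref{r1}, which is precisely what drives those frame properties.

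The main obstacle I anticipate is not a conceptual one but a bookkeeping one: one has to be careful that the extension of the valuation to formulas containing $\vee$ and $\bot$ does not accidentally disturb the forcing of subformulas of $A$, and that the frame conditions indeed depend only on the $\rightarrow$-behaviour already guaranteed by $[\rightarrow,\wedge]$-theoryhood. Once this is in place, $\mathfrak{M}_{\mathsf{F_{[\rightarrow,\wedge]}}}, \Pi\nVdash A$ witnesses a countermodel in $\mathsf{F}$, so by the soundness of $\mathsf{F}$ we conclude $\nvdash_{\mathsf{F}} A$, completing the argument. As the authors remark, this final proof is of the same general shape as Proposition~\ref{F12}, so a brief reference to that pattern should suffice.
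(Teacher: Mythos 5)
Your proposal is correct and is essentially the paper's own argument (the paper simply remarks that the proof of Proposition~\ref{F12} is general and carries over): left to right because all axioms and rules of $\mathsf{F_{[\rightarrow,\wedge]}}$ are available in $\mathsf{F}$, and right to left by viewing the canonical model of the fragment as a rooted Kripke model for $\mathsf{F}$ in which the unprovable formula fails, then invoking soundness of $\mathsf{F}$. One small correction: rooted subintuitionistic Kripke frames for $\mathsf{F}$ carry no frame conditions corresponding to the axioms or rules, so the only thing to verify is that the empty theory $\Delta$ is an omniscient root, i.e.\ $\Delta R \Gamma$ for every $[\rightarrow,\wedge]$-theory $\Gamma$, which follows at once from closure of theories under provable implications; the rest of your "bookkeeping" worry about $\vee$ and $\bot$ is harmless since $A$ contains neither.
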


\section{Fragments of extensions  of {\sf F}}\label{extf}

In this section we characterize the implicational fragments of $\mathsf{FT, FP, FP_T, FR. FRT, FRP}$.

\subsection{The implicational fragment of $ {\sf FT}  $}

We consider the following axiom schema:
$$T:~~~(A\rightarrow B)\rightarrow ((B\rightarrow C)\rightarrow (A\rightarrow C))      $$
This axiom has been proved to be strongly complete for transitive frames (see~\cite{DF2, Co}).
In the following ${\sf F}_{[\rightarrow]}+T$  means that in $  T $ only implicational formulas are substituted.
We just follow the proof of the completeness of $ {\sf F_{[\rightarrow]}} $ and the definition of the canonical model seeing that this model becomes transitive automatically. We can then claim


\begin{thm}
${\sf FT}_{[\rightarrow]}={\sf F}_{[\rightarrow]}+T$.
\end{thm}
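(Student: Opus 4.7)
The plan is to prove both inclusions. The easy direction $\mathsf{F}_{[\rightarrow]}+T \subseteq \mathsf{FT}_{[\rightarrow]}$ is immediate: every axiom and rule of $\mathsf{F}_{[\rightarrow]}$ is available in $\mathsf{FT}$ (by Proposition~\ref{F12} adapted), and $T$ is an axiom of $\mathsf{FT}$, so any derivation in $\mathsf{F}_{[\rightarrow]}+T$ is already a derivation in $\mathsf{FT}$.

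For the nontrivial direction $\mathsf{FT}_{[\rightarrow]} \subseteq \mathsf{F}_{[\rightarrow]}+T$, I would mimic the canonical model construction from Subsection~2.1 verbatim, but now working with $\mathsf{F}_{[\rightarrow]}+T$-theories (closed under (a), (b), (c) of Definition~\ref{r1} with ``$\mathsf{F}_{[\rightarrow]}$'' replaced everywhere by ``$\mathsf{F}_{[\rightarrow]}+T$''). Lemma~\ref{g}, the definition of the canonical model, and the Truth Lemma (Lemma~\ref{D1}) all carry over without change, since their proofs only used schematic theory closure. This yields completeness of $\mathsf{F}_{[\rightarrow]}+T$ with respect to its own canonical Kripke model; hence if $A$ is not derivable in $\mathsf{F}_{[\rightarrow]}+T$, there is a world of this model refuting $A$.

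The key step, and the only one that is not routine, is to show that the canonical accessibility relation $R$ on $\mathsf{F}_{[\rightarrow]}+T$-theories is transitive, so that the canonical model is a Kripke model for $\mathsf{FT}$. Suppose $\Gamma R \Delta$ and $\Delta R \Sigma$, and let $A\rightarrow B \in \Gamma$ with $A \in \Sigma$; I must show $B \in \Sigma$. Since every theory contains all theorems of $\mathsf{F}_{[\rightarrow]}+T$, the instance $(A\rightarrow B)\rightarrow((B\rightarrow B)\rightarrow(A\rightarrow B))$ of $T$ is in $\Gamma$, and so clause (b) of the theory definition (applied with $A\rightarrow B$) gives $(B\rightarrow B)\rightarrow(A\rightarrow B) \in \Gamma$. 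Because $B\rightarrow B$ is a theorem, it lies in $\Delta$, and $\Gamma R \Delta$ then forces $A\rightarrow B \in \Delta$. Applying $\Delta R \Sigma$ to $A\rightarrow B$ and $A \in \Sigma$ yields $B \in \Sigma$, as required.

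Once transitivity is in hand, the canonical model is an $\mathsf{FT}$-model, so soundness of $\mathsf{FT}$ with respect to transitive Kripke models gives: if $A\in \mathsf{FT}_{[\rightarrow]}$ then $A$ is valid in the canonical model of $\mathsf{F}_{[\rightarrow]}+T$, hence $A\in \mathsf{F}_{[\rightarrow]}+T$ by the Truth Lemma applied to the empty theory. The main obstacle, as indicated, is really just the transitivity argument; every other piece is an inessential relabelling of the material already developed for $\mathsf{F}_{[\rightarrow]}$.
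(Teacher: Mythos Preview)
Your proposal is correct and follows essentially the same approach as the paper: redo the canonical model construction over $\mathsf{F}_{[\rightarrow]}+T$ and observe that the resulting model is transitive, so that completeness for transitive $[\rightarrow]$-Kripke models yields the inclusion $\mathsf{FT}_{[\rightarrow]}\subseteq\mathsf{F}_{[\rightarrow]}+T$. The paper merely states that the canonical model ``becomes transitive automatically'' without spelling out why; your concrete argument via the instance $(A\rightarrow B)\rightarrow((B\rightarrow B)\rightarrow(A\rightarrow B))$ of $T$ is a clean way to fill in that step.
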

\subsection{The implicational fragment of $ {\sf FP}  $}



We examine the following axiom schema:

$$ P:~~~ p\rightarrow (\top\rightarrow p)~~~ (p ~a ~propositional~ letter)$$


This axiom has been shown to be strongly complete for persistent models (see~\cite{DF2}). Moreover, it is sufficient to establish the strong completeness of ${\mathsf F_{[\rightarrow]}+ P}$ for $\rightarrow$-models by the method used in the previous section. Similarly, we can then claim

\begin{thm}
$\mathsf{FP_{[\rightarrow]}}=\mathsf{F_{[\rightarrow]}}+ P$.
\end{thm}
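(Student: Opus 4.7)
The plan is to follow the template established by the preceding \textsf{FT}-theorem and, more broadly, the Henkin argument behind Theorem~\ref{CFrr}: fix an axiomatization, define a suitable notion of theory, build the canonical model, and show that the extra axiom $P$ forces the desired semantic condition on that model. The only conceptual subtlety is that $\top$ is not a symbol of $\mathcal{L}_\rightarrow$; I would read $\top$ as shorthand for a fixed theorem, e.g.\ $q\to q$ for a distinguished variable $q$, so that every instance of $P$ is a genuine formula of the fragment. With this reading, ``${\sf F}_{[\rightarrow]}+P$'' means the extension by all substitution instances of $p\to((q\to q)\to p)$ with $p$ a propositional letter.

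Next, I would redefine $(\mathsf{F}_{[\rightarrow]}+P)$-theories by taking the clauses of Definition~\ref{r1} verbatim, replacing the base set ${\sf F}_{[\rightarrow]}$ by ${\sf F}_{[\rightarrow]}+P$ in clause~(c). The Weak Deduction Theorem, the Lindenbaum-style extension (Lemma~\ref{g}), the canonical model definition and the Truth Lemma then transfer without change, since the authors have already emphasized that those arguments are ``quite general''. Soundness over persistent subintuitionistic $[\rightarrow]$-Kripke models is routine: one checks that $P$ is validated on any such model, and induction does the rest.

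The heart of the argument is the new step: verifying that the canonical model is \emph{persistent}, i.e.\ whenever $\Gamma R \Sigma$ and $p\in\Gamma$ for a propositional letter $p$, then $p\in\Sigma$. This is exactly what $P$ delivers. Since $p\to(\top\to p)$ is an axiom, it belongs to every theory, in particular $p\to(\top\to p)\in\Gamma$; together with $p\in\Gamma$ and clause~(b) of Definition~\ref{r1} this yields $\top\to p\in\Gamma$. Because $\top$ is a theorem it lies in $\Sigma$, and then $\Gamma R\Sigma$ applied to $\top\to p$ gives $p\in\Sigma$. With persistence in hand, the Truth Lemma ensures completeness; a final application of the analogue of Proposition~\ref{F12} — the canonical model of ${\sf F}_{[\rightarrow]}+P$ is a Kripke model for ${\sf FP}$, so any $\rightarrow$-formula not provable in ${\sf F}_{[\rightarrow]}+P$ is not provable in ${\sf FP}$ either — yields the claimed equality.

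The only genuine obstacle is the one just handled: ensuring that $P$, read in the implicational fragment, translates into persistence on atoms in the canonical model. Everything else (deduction theorem, Lindenbaum, truth lemma, fragment-matches-the-whole-logic step) is identical to the arguments already carried out in full for ${\sf F}_{[\rightarrow]}$ and is therefore safely inherited.
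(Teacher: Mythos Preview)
Your proposal is correct and follows essentially the same route the paper indicates: reuse the Henkin machinery of Theorem~\ref{CFrr} and check that the canonical model automatically satisfies the persistence condition, just as was done for transitivity in the ${\sf FT}$ case. You spell out explicitly both the reading of $\top$ in $\mathcal{L}_\rightarrow$ and the persistence verification via $P$ and the relation $R$, which the paper leaves entirely implicit in its one-line remark that ``it is sufficient to establish the strong completeness of ${\mathsf F_{[\rightarrow]}+ P}$ for $\rightarrow$-models by the method used in the previous section.''
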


\subsection{The implicational fragment of $ {\sf FTP}  $}

K. Kikuchi, in \cite{Kent}, introduced a system characterizing the implicational fragment of {\sf BPC} as below ({\sf FTP}~=~{\sf BPC}), and also proved its simple completeness. 

$ A\rightarrow A ~~~~~~~~~~~~~~~~~A\rightarrow (B\rightarrow A) $

$( \bar{A_{n}} \rightarrow (B\rightarrow D ))\rightarrow ((\bar{A_{n}}\rightarrow (C\rightarrow B))\rightarrow (\bar{A_{n}}\rightarrow (C\rightarrow D )))$

$ \frac{A ~~~ A\rightarrow B}{B} $

\noindent His system includes the axiom 

$$ P_{T}:~~~ A\rightarrow (B\rightarrow A)$$

This axiom has been proved to be strongly complete for persistent, transitive frames (see~\cite{DF2}) and is therefore equivalent to $T + P$. In the present paper, we replace Kikuchi's complex axiom by Rules 3 and 4 of $\mathsf{F_\rightarrow}$.


Also in this case the axiom $P_T$ is sufficient to prove the strong completeness of ${\mathsf F_{[\rightarrow]}}+P_T$ by the methods used in the previous sections for the corresponding $\rightarrow$-frames. Similarly, we can then claim

\begin{thm}
$\mathsf{ FP_{T}}_{[\rightarrow]}={\sf F}_{[\rightarrow]}+ P_{T}$.
\end{thm}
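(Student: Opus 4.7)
Mimic the strong completeness proof for $\mathsf{F}_{[\rightarrow]}$ given earlier in this section, adding $P_T$ to the axiom base throughout. Concretely, redefine $(\mathsf{F}_{[\rightarrow]}+P_T)$-theories exactly as in Definition \ref{r1}, only now requiring that all instances of $P_T$ lie in every theory; the Weak Deduction Theorem, the Lindenbaum-style construction of Lemma \ref{g}, the definition of the canonical model, and the Truth Lemma then transfer with no essential change, since the authors have already pointed out that these arguments are generic in the chosen base system. What must be verified anew is that the canonical model so obtained is a Kripke model for $\mathsf{FP_T}$, i.e.\ that the canonical relation $R$ is transitive and that the canonical valuation is persistent on propositional atoms (persistence on $\rightarrow$-formulas is then automatic from transitivity by the usual semantic argument).

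For transitivity, suppose $\Gamma R\,\Delta$ and $\Delta R\,\Sigma$ and let $A\rightarrow B \in \Gamma$ and $A \in \Sigma$; we aim at $B \in \Sigma$. The formula $(A\rightarrow B)\rightarrow((A\rightarrow A)\rightarrow(A\rightarrow B))$ is an instance of $P_T$, hence a theorem, so it lies in $\Gamma$, and closure clause (b) of Definition \ref{r1} gives $(A\rightarrow A)\rightarrow(A\rightarrow B) \in \Gamma$. Since $A\rightarrow A \in \Delta$ by Axiom 1, $\Gamma R\,\Delta$ yields $A\rightarrow B \in \Delta$, and then $\Delta R\,\Sigma$ together with $A\in\Sigma$ forces $B \in \Sigma$. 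For atomic persistence, if $p \in \Gamma$ and $\Gamma R\,\Delta$, the instance $p\rightarrow((p\rightarrow p)\rightarrow p)$ of $P_T$ combined with clause (b) yields $(p\rightarrow p)\rightarrow p \in \Gamma$, and since $p\rightarrow p \in \Delta$, $\Gamma R\,\Delta$ delivers $p \in \Delta$.

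With these two frame properties in hand, the Truth Lemma gives completeness of $\mathsf{F}_{[\rightarrow]}+P_T$ with respect to persistent transitive Kripke models. Soundness of $P_T$ over such models is already recorded in~\cite{DF2}, and the reverse inclusion---that every $\rightarrow$-formula provable in $\mathsf{FP_T}$ is already provable in $\mathsf{F}_{[\rightarrow]}+P_T$---follows exactly as in Proposition \ref{F12}, because the canonical model built here is itself an $\mathsf{FP_T}$-model. The step I expect to be least routine is the frame-condition verification, since the single axiom $P_T$ must carry the weight of both $T$ and $P$; the computations above show that, thanks to $A\rightarrow A$ serving as a ``surrogate $\top$,'' this causes no real difficulty and the overall argument goes through as in the earlier subsections.
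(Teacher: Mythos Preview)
Your proposal is correct and follows essentially the same approach as the paper, which merely asserts that ``the axiom $P_T$ is sufficient to prove the strong completeness of $\mathsf{F}_{[\rightarrow]}+P_T$ by the methods used in the previous sections for the corresponding $\rightarrow$-frames.'' You have spelled out in detail precisely the one step the paper leaves implicit---namely the verification that the canonical model is transitive and persistent---and your use of $A\rightarrow A$ as a surrogate for $\top$ to extract both properties from $P_T$ alone is exactly what is needed.
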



\subsection{The implicational fragment of $ {\sf FR}  $}

In this subsection we will characterize the implicational fragment of {\sf FR}. The procedure is in this case somewhat more complicated to make sure that the canonical model will be reflexive.

The logic {\sf FR} obtained by adding the axiom $A\wedge(A\rightarrow B)\rightarrow B$ to {\sf F} is known to be complete w.r.t.\ reflexive frames. We will add the following rule $ I_{Refl} $ to $\mathsf{F_{[\rightarrow]}}$  to obtain an axiomatization of ${\mathsf{FR_{[\rightarrow]}}}$.

$$ I_{Refl}: \dfrac{\bar{A_{n}}\rightarrow B~~~~~~~ \bar{A_{n}}\rightarrow (B\rightarrow C)}{\bar{A_{n}}\rightarrow C}.~~~~~~~~(n\geq 0)$$

In the definition of $\vdash_\mathsf{FR_{[\rightarrow]}}$, $ I_{Refl} $ can be applied without restrictions, and to define $\mathsf{ FR}_{[\rightarrow]}$-theories the corresponding clause is added. Note that full $M\!P$ is included in $ I_{Refl} $ by $n=0$. The  canonical models that  arise will be reflexive by the unrestricted use of  $M\!P$.





\begin{thm}
\em\textbf{(Weak Deduction Theorem)} $A\vdash_{{\sf FR_{[\rightarrow]}} }\!B  $ if and only if $\vdash_{{\sf FR_{[\rightarrow]}} }\! A\rightarrow B $.
\end{thm}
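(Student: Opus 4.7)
The plan is to mirror the proof of the Weak Deduction Theorem for $\mathsf{F_{[\rightarrow]}}$ (Theorem~\ref{lli}), since the only change in $\mathsf{FR_{[\rightarrow]}}$ is the replacement of weak $M\!P$ and rule~4 by the single unrestricted rule $I_{Refl}$ (which, via the case $n=0$, subsumes full $M\!P$). So the base cases (axiom 1, the case $B = A$) and the case of rule 3 (which still requires no assumptions) remain exactly as before; there is also nothing to do for steps obtained by applying rule 3 without assumptions, since such subproofs have no dependence on $A$ and we can re-apply rule 3 to get $\vdash A \to (D \to C)$ from $\vdash D \to C$.

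For the easy direction $\Leftarrow$: assume $\vdash_{\mathsf{FR_{[\rightarrow]}}} A \to B$. Since $A \vdash A$ and since $M\!P$ is now unrestricted in $\mathsf{FR_{[\rightarrow]}}$ (being the $n=0$ instance of $I_{Refl}$), one step of $M\!P$ yields $A \vdash_{\mathsf{FR_{[\rightarrow]}}} B$.

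For the hard direction $\Rightarrow$, by induction on the length of the derivation of $A \vdash_{\mathsf{FR_{[\rightarrow]}}} B$, the only new inductive step is when the last rule applied is $I_{Refl}$. Suppose $A \vdash \bar{A_n} \to C$ was obtained from $A \vdash \bar{A_n} \to B$ and $A \vdash \bar{A_n} \to (B \to C)$. By the induction hypothesis,
\[
\vdash_{\mathsf{FR_{[\rightarrow]}}} A \to (\bar{A_n} \to B) \qquad \text{and} \qquad \vdash_{\mathsf{FR_{[\rightarrow]}}} A \to (\bar{A_n} \to (B \to C)).
\]
But these two formulas are precisely the premises of an instance of $I_{Refl}$ of length $n+1$, taking $\bar{A_{n+1}}$ to be the sequence $A, A_1, \ldots, A_n$. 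Applying $I_{Refl}$ yields $\vdash_{\mathsf{FR_{[\rightarrow]}}} A \to (\bar{A_n} \to C)$, as required. A fully analogous argument handles an $M\!P$ step used with the assumption $A$ active: from $\vdash A \to D$ and $\vdash A \to (D \to B)$ (the induction hypothesis) one applies $I_{Refl}$ with $n=1$ and $\bar{A_1}=A$ to obtain $\vdash A \to B$.

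The main thing to notice, rather than any genuine obstacle, is that the form of the $I_{Refl}$ rule has been engineered precisely so that prepending an extra antecedent $A$ to each premise still yields a valid instance of the same rule. This is the same pattern that made rule~4 work in the $\mathsf{F_{[\rightarrow]}}$ proof, and it is why no ``auxiliary'' lemmas are needed here. The rest of the cases are literally those of Theorem~\ref{lli} and need no change.
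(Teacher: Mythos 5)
Your proof is correct and follows essentially the same route as the paper: by the already-proved Weak Deduction Theorem for $\mathsf{F_{[\rightarrow]}}$ only the new $I_{Refl}$ step needs treatment, and you handle it exactly as the paper does, by noting that prepending $A$ to the premises gives another instance of $I_{Refl}$ (with $n$ increased by one), so the conclusion $\vdash A\rightarrow(\bar{A_n}\rightarrow C)$ follows; your separate remark about unrestricted $M\!P$ being the $n=0$ case is a harmless elaboration of the same point.
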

\begin{proof} By Theorem \ref{lli}, we just need to consider the induction step of $ I_{Refl} $ in the nontrivial direction.
If $ A\vdash_\mathsf{ FR_{[\rightarrow]}}\!\bar{D}_n\rightarrow \!B $
and $  A\vdash_\mathsf{ FR_{[\rightarrow]}}\!\bar{D}_n\rightarrow\!(B\rightarrow C)  $. 
Then by induction hypothesis, $\vdash_\mathsf{ FR_{[\rightarrow]}}\!A\rightarrow (\bar{D}_n\rightarrow\!B)$
and $\vdash_{{\sf FR_{[\rightarrow]}} }\!A\rightarrow (\bar{D}_n\rightarrow\!(B\rightarrow C)) $, and by $ I_{Refl} $ we have $\vdash_{{\sf FR_{[\rightarrow]}} }\!A\rightarrow (\bar{D}_n\rightarrow\!
C)$.  
\end{proof}

As in the previous section, we conclude to

\begin{prop}
$ \Delta $ is an $\mathsf{FR_{[\rightarrow]}}$-theory $ \Longleftrightarrow $ $ \Delta\vdash_{{\sf FR_{[\rightarrow]}} }\!A $ if and only if $ A \in \Delta$.
\end{prop}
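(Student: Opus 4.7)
The plan is to mirror the analogous proposition for $\mathsf{F_{[\rightarrow]}}$ stated just after Definition~\ref{r1}, extending the argument to accommodate the additional rule $I_{Refl}$. First, I need to make explicit what an $\mathsf{FR_{[\rightarrow]}}$-theory is: it is a set $\Delta$ of $\rightarrow$-formulas satisfying the three clauses (a)--(c) of Definition~\ref{r1} (with $\mathsf{F_{[\rightarrow]}}$ replaced by $\mathsf{FR_{[\rightarrow]}}$) together with the new closure clause corresponding to $I_{Refl}$:
\begin{center}
(d) \ $\bar{A_n}\rightarrow B\in\Delta$ and $\bar{A_n}\rightarrow(B\rightarrow C)\in\Delta$ $\Rightarrow$ $\bar{A_n}\rightarrow C\in\Delta$, for every $n\geq 0$.
\end{center}

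For the right-to-left direction, assume $\Delta\vdash_{\mathsf{FR_{[\rightarrow]}}}A \Leftrightarrow A\in\Delta$. Each of the four closure conditions is verified by running the corresponding rule inside $\Delta$: if the hypotheses of (a), (b) or (d) lie in $\Delta$, then the conclusion is derivable from $\Delta$ by one application of rule 4, of weak $M\!P$ together with rule 3, or of $I_{Refl}$ respectively, and by the assumption the conclusion lies in $\Delta$; clause (c) follows because any theorem is an empty-hypothesis derivation from $\Delta$.

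For the left-to-right direction, assume $\Delta$ is an $\mathsf{FR_{[\rightarrow]}}$-theory and proceed by induction on the length of a derivation of $A$ from $\Delta$. The base cases (an axiom, an element of $\Delta$) are handled by clause (c) and by hypothesis. The induction steps for rule 3 (applied only when there are no assumptions, hence producing a theorem), for rule 4, and for weak $M\!P$ are exactly as in the $\mathsf{F_{[\rightarrow]}}$-analog: the first uses (c), the second uses (a), the third uses (b). The only genuinely new case is $I_{Refl}$: if $A=\bar{A_n}\rightarrow C$ was obtained from $\Delta\vdash\bar{A_n}\rightarrow B$ and $\Delta\vdash\bar{A_n}\rightarrow(B\rightarrow C)$, the induction hypothesis puts both premises in $\Delta$, and clause (d) delivers $A\in\Delta$.

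I do not expect a serious obstacle here; the argument is routine once the theory definition is aligned with the rule set. The one point worth flagging is that $I_{Refl}$ is allowed without side conditions, so the corresponding theory clause (d) is likewise unconditional. In particular, full $M\!P$ (the $n=0$ instance of $I_{Refl}$) is already absorbed into (d), so no separate unrestricted $M\!P$ case needs to be carved out in the induction, and the delicate bookkeeping that was necessary for weak $M\!P$ in the $\mathsf{F_{[\rightarrow]}}$ case is unnecessary here.
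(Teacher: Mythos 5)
Your proposal is correct and follows essentially the same route as the paper, which simply carries over the $\mathsf{F_{[\rightarrow]}}$ argument ("right to left trivial, left to right by induction on the length of the derivation") after adding the unrestricted theory clause for $I_{Refl}$; your observation that the $n=0$ instance of $I_{Refl}$ absorbs full $M\!P$ matches the paper's remark to the same effect. (Only a cosmetic slip: verifying clause (b) needs weak $M\!P$ alone, not rule 3.)
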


\begin{lem}
\label{g11}
 If $ \Sigma\nvdash_{{\sf FR_{[\rightarrow]}} }\!A$, then there is a $ \Pi\supseteq \Sigma $  such that $ \Pi $ is an $\mathsf{FR}_{[\rightarrow]} $-theory and  $ A \notin \Pi $.
\end{lem}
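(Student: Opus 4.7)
The plan is to mimic the Lindenbaum construction of Lemma~\ref{g} nearly verbatim, the only novelty being an additional closure clause to verify. First I enumerate all $\rightarrow$-formulas with infinitely many repetitions as $B_0, B_1, \ldots$, set $\Pi_0 = \Sigma \cup \mathsf{FR}_{[\rightarrow]}$, let $\Pi_{n+1} = \Pi_n \cup \{B_n\}$ whenever $\Pi_n, B_n \nvdash_{\mathsf{FR}_{[\rightarrow]}} A$ and $\Pi_{n+1} = \Pi_n$ otherwise, and take $\Pi = \bigcup_n \Pi_n$. That $\Sigma \subseteq \Pi$ and $\mathsf{FR}_{[\rightarrow]} \subseteq \Pi$ is immediate, and the invariant $\Pi_n \nvdash_{\mathsf{FR}_{[\rightarrow]}} A$ is preserved at every stage (base case from $\Sigma \nvdash A$, step built into the case split), so $A \notin \Pi$.

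To see that $\Pi$ is an $\mathsf{FR}_{[\rightarrow]}$-theory I must verify three closure clauses: closure under the rule-4 pattern, closure under provable implications, and the new clause corresponding to $I_{Refl}$. The first two transcribe from Lemma~\ref{g} without change. For the $I_{Refl}$ clause, suppose $\bar{A}_k \rightarrow B \in \Pi$ and $\bar{A}_k \rightarrow (B \rightarrow C) \in \Pi$ but $\bar{A}_k \rightarrow C \notin \Pi$. Using the infinite repetitions I pick $m$ so large that both premises already lie in $\Pi_m$ and $B_m = \bar{A}_k \rightarrow C$. Since $B_m \notin \Pi$, the construction must have produced $\Pi_m, B_m \vdash_{\mathsf{FR}_{[\rightarrow]}} A$. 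On the other hand, because $I_{Refl}$ may be applied in the presence of assumptions, from the two premises already in $\Pi_m$ one obtains $\Pi_m \vdash_{\mathsf{FR}_{[\rightarrow]}} B_m$. Splicing this derivation into the leaf uses of $B_m$ in the derivation of $A$ then yields $\Pi_m \vdash_{\mathsf{FR}_{[\rightarrow]}} A$, contradicting the invariant.

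The step I expect to demand the most care is the splicing used implicitly at the end, since derivations in $\mathsf{FR}_{[\rightarrow]}$ carry the restrictions on rules 2 and 3 (they are permitted only within assumption-free subproofs). What is needed is the observation that replacing leaf occurrences of $B_m$ by a derivation whose hypotheses already lie in $\Pi_m$ does not enlarge the hypothesis set of any subderivation, so every restricted rule application still sits in an assumption-free subproof. Once this cut-style move is justified the rest of the argument mirrors Lemma~\ref{g} and may be passed over in the same spirit the authors adopt there.
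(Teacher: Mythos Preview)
Your argument is correct and follows exactly the approach the paper intends: the paper gives no separate proof of this lemma, having remarked after Lemma~\ref{g} that ``this proof is quite general,'' so your Lindenbaum construction with the extra $I_{Refl}$ closure check is precisely the expected adaptation. Your care about the cut step actually goes beyond what the paper supplies (it calls the analogous move a ``trivial step'' to be skipped), and note that your worry about the rule~2 restriction is largely moot here since $I_{Refl}$ with $n=0$ already gives unrestricted $M\!P$; only the rule~3 restriction needs the observation about assumption-free subproofs, and your justification for that is sound.
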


\begin{lem}\label{D}
\em\textbf{(Truth lemma)} For each $ \Gamma \in  W_{{\sf FR_{[\rightarrow]}}}$ and for every formula $  C $, $$ \Gamma\Vdash   C~{\rm iff}~ C \in \Gamma .$$
\end{lem}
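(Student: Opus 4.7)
The plan is to follow the template of Lemma~\ref{D1} closely, since the only change relative to $\mathsf{F}_{[\rightarrow]}$ is the presence of the rule $I_{Refl}$ and, correspondingly, the extra closure clause in the definition of $\mathsf{FR}_{[\rightarrow]}$-theory. I proceed by induction on the complexity of $C$. The atomic case is the canonical valuation. In the implication case $C = A\rightarrow B$, the direction $A\rightarrow B \in \Gamma \Rightarrow \Gamma\Vdash A\rightarrow B$ is immediate from the definition of the accessibility relation $R$ together with the induction hypothesis, exactly as in Lemma~\ref{D1}.

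For the converse, assume $\Gamma\Vdash A\rightarrow B$ and set $\Sigma = \{E \mid A\rightarrow E \in \Gamma\}$ as before. I first verify that $\Sigma \in W_{\mathsf{FR}_{[\rightarrow]}}$ by checking all clauses of the $\mathsf{FR}_{[\rightarrow]}$-theory definition. The clauses (a), (b), (c) from Definition~\ref{r1} are checked exactly as in the proof of Lemma~\ref{D1}: each closure property of $\Sigma$ is derived by prefixing with $A\rightarrow$ and using the corresponding closure property of $\Gamma$. The only genuinely new thing to verify is the clause for $I_{Refl}$: if $\bar{A}_n\rightarrow B \in \Sigma$ and $\bar{A}_n\rightarrow (B\rightarrow C)\in \Sigma$, then by definition of $\Sigma$ we have $A\rightarrow (\bar{A}_n\rightarrow B)\in\Gamma$ and $A\rightarrow(\bar{A}_n\rightarrow (B\rightarrow C))\in\Gamma$; since $\Gamma$ is an $\mathsf{FR}_{[\rightarrow]}$-theory it is closed under $I_{Refl}$ with the extra prefix $A\rightarrow$, giving $A\rightarrow(\bar{A}_n\rightarrow C)\in\Gamma$ and hence $\bar{A}_n\rightarrow C\in \Sigma$.

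Next I check $\Gamma R\, \Sigma$ precisely as in Lemma~\ref{D1}: if $E\rightarrow F\in\Gamma$ and $E\in\Sigma$, then $A\rightarrow E\in\Gamma$, so by closure under rule~4 (clause (a)) we get $A\rightarrow F\in\Gamma$, i.e.\ $F\in\Sigma$. Moreover $A\in\Sigma$ because $A\rightarrow A$ is an axiom and $\Gamma$ contains all theorems. Combining these with the hypothesis $\Gamma\Vdash A\rightarrow B$ gives $\Sigma\Vdash A$ (by induction hypothesis applied to $A\in\Sigma$), hence $\Sigma\Vdash B$, hence by induction hypothesis $B\in \Sigma$, i.e.\ $A\rightarrow B\in\Gamma$, as required.

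The main obstacle is very minor and is really just the bookkeeping of the new closure clause: one must recognise that closure of $\Sigma$ under $I_{Refl}$ at prefix length $n$ corresponds to closure of $\Gamma$ under $I_{Refl}$ at prefix length $n+1$ (with the additional head $A\rightarrow$), which is covered by the definition of $\mathsf{FR}_{[\rightarrow]}$-theory applied to $\Gamma$. Because full $M\!P$ is the $n=0$ instance of $I_{Refl}$, this also confirms in passing that the canonical frame is reflexive, a fact not needed in the Truth Lemma itself but which will be used in the accompanying completeness argument.
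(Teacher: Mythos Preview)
Your proposal is correct and follows essentially the same approach as the paper: define $\Sigma = \{E \mid A\rightarrow E \in \Gamma\}$, verify it is an $\mathsf{FR}_{[\rightarrow]}$-theory (the only new check being closure under $I_{Refl}$, handled by pushing the extra $A\rightarrow$ prefix onto $\Gamma$), verify $\Gamma R\,\Sigma$, and conclude. If anything you are slightly more explicit than the paper, which only spells out what amounts to the $n=0$ instance of the $I_{Refl}$ closure check, whereas you state the general $n$ case.
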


\begin{proof}
By induction on $C$.
The proof runs the same as before  until we show for the converse direction that $ \Sigma = \left\lbrace  E ~|~A\rightarrow E \in\Gamma\right\rbrace\in W_{{\sf FR_{[\rightarrow]}}}$ and reach the point
that we assume that $ B\rightarrow C \in \Sigma$ and $ B \in \Sigma$ to prove that $C\in\Sigma$.
In the present case, by definition of $  \Sigma $, we have $A\rightarrow (B\rightarrow C) \in \Gamma $ and $A\rightarrow B \in \Gamma $. Then, by $ I_{Refl} $, $A\rightarrow C \in \Gamma $, and by definition of $  \Sigma $, $C \in  \Sigma $. So,  $ \Sigma \in W_{{\sf FR_{[\rightarrow]}}}$.

The next part of the proof is showing that $  \Gamma R\,  \Sigma $. Assume $ E\rightarrow F \in \Gamma $ and $ E \in \Sigma $. By definition of $ \Sigma $, $ A \rightarrow E \in \Gamma $. As $ \Gamma $ is an $\mathsf{FR_{[\rightarrow]}} $-theory, we have also $ A\rightarrow F \in \Gamma$, hence $F\in\Sigma$, and $ \Sigma \Vdash F$ by the induction hypothesis. 

The final conclusion is the same: we have  $  \Gamma\Vdash A\rightarrow B $, $  \Gamma R \, \Sigma $ and  $ \Sigma \Vdash A$. Thus  $ \Sigma \Vdash B$ holds, and by the induction hypothesis $ B \in \Sigma $, that is, $A\rightarrow B \in    \Gamma$.
\end{proof}



\begin{thm}
\em\textbf{(Completeness and Soundness Theorem)} $ \Sigma\vdash_{{\sf FR_{[\rightarrow ]}} } A $ if and only if $ \Sigma\vDash_{{\sf FR_{[\rightarrow]}} } A $.
\end{thm}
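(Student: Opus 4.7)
The plan is to mirror the proof of Theorem~\ref{FF} for $\mathsf{F}_{[\rightarrow]}$. Soundness (the left-to-right direction) proceeds by a routine induction on derivations: the axiom and the rules inherited from $\mathsf{F}_{[\rightarrow]}$ are handled exactly as there, so only the rule $I_{Refl}$ requires fresh attention. To see that $I_{Refl}$ is sound on reflexive Kripke models, suppose $w\Vdash\bar{A_n}\rightarrow B$ and $w\Vdash\bar{A_n}\rightarrow(B\rightarrow C)$ at a world $w$ of such a model. For any $R$-chain $w=w_0\,R\,w_1\,R\,\cdots\,R\,w_n$ with $w_i\Vdash A_i$ ($1\le i\le n$) we obtain $w_n\Vdash B$ and $w_n\Vdash B\rightarrow C$; reflexivity $w_n\,R\,w_n$ then yields $w_n\Vdash C$, so that $w\Vdash\bar{A_n}\rightarrow C$.

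For the other direction I would assume $\Sigma\nvdash_{\mathsf{FR}_{[\rightarrow]}}\!A$. By Lemma~\ref{g11} there is an $\mathsf{FR}_{[\rightarrow]}$-theory $\Pi\supseteq\Sigma$ with $A\notin\Pi$. Applying the truth lemma (Lemma~\ref{D}) inside the canonical model $\mathfrak{M}_{\mathsf{FR}_{[\rightarrow]}}$ gives $\mathfrak{M}_{\mathsf{FR}_{[\rightarrow]}},\Pi\Vdash\Sigma$ and $\mathfrak{M}_{\mathsf{FR}_{[\rightarrow]}},\Pi\nVdash A$, which will yield $\Sigma\nvDash_{\mathsf{FR}_{[\rightarrow]}}\!A$ as soon as we know that $\mathfrak{M}_{\mathsf{FR}_{[\rightarrow]}}$ is genuinely a reflexive Kripke model.

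The only delicate point, and hence the main obstacle, is therefore the check that the canonical frame is reflexive: that $\Gamma R\,\Gamma$ for every $\Gamma\in W_{\mathsf{FR}_{[\rightarrow]}}$. Unfolding the definition of $R$, this says that whenever $A\rightarrow B\in\Gamma$ and $A\in\Gamma$, also $B\in\Gamma$. But the $n=0$ instance of $I_{Refl}$ is exactly full $M\!P$, and by construction $\mathsf{FR}_{[\rightarrow]}$-theories are closed under $I_{Refl}$; hence reflexivity is immediate, as the paper already anticipates in its remark that ``the canonical models that arise will be reflexive by the unrestricted use of $M\!P$''. Combining this observation with the truth lemma closes both directions.
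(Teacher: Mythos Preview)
Your proposal is correct and follows exactly the route the paper intends: the paper states this theorem without proof because it is a straightforward transplant of Theorem~\ref{FF}, using Lemma~\ref{g11} and the Truth Lemma~\ref{D} together with the reflexivity of the canonical frame, which the paper already remarks follows from unrestricted $M\!P$ (the $n=0$ case of $I_{Refl}$). Your added detail---the explicit soundness check for $I_{Refl}$ via $R$-chains and reflexivity, and the unfolding of $\Gamma R\,\Gamma$---merely spells out what the paper leaves implicit.
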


\begin{prop}
A formula $ A \in \mathcal{L}_{[\rightarrow]} $ is provable in $ {\sf FR_{[\rightarrow]}}$ iff it is provable in ${\sf FR}  $.
\end{prop}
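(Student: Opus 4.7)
The plan is to mirror the two-direction argument of Proposition \ref{F12}, with the reflexivity of the canonical frame being the only genuinely new ingredient.

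For the forward direction, I would check that every axiom and every rule of $\mathsf{FR}_{[\rightarrow]}$ is derivable in $\mathsf{FR}$. The axioms and rules inherited from $\mathsf{F}_{[\rightarrow]}$ are immediate, since $\mathsf{FR}$ extends $\mathsf{F}$. The only new item is the rule $I_{Refl}$, which I would derive in $\mathsf{FR}$ from the reflexivity axiom $A\wedge(A\rightarrow B)\rightarrow B$: given $\bar{A_n}\rightarrow B$ and $\bar{A_n}\rightarrow(B\rightarrow C)$, one uses conjunction introduction and axiom~8 in the style of \ref{lli} to derive $\bar{A_n}\rightarrow(B\wedge(B\rightarrow C))$, and then applies rule~4 together with the reflexivity axiom to conclude $\bar{A_n}\rightarrow C$.

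For the backward direction, I would argue by contraposition, exactly as in Proposition \ref{F12}. Suppose $A\in\mathcal{L}_{[\rightarrow]}$ is not provable in $\mathsf{FR}_{[\rightarrow]}$. Then by the completeness theorem for $\mathsf{FR}_{[\rightarrow]}$, $A$ fails at the empty theory $\Delta$ in the canonical model $\mathfrak{M}_{\mathsf{FR}_{[\rightarrow]}}$. The key claim is that this canonical model, once the natural clauses for the other connectives are added to the valuation, is a Kripke model for $\mathsf{FR}$; equivalently, the canonical accessibility relation $R$ is reflexive. This is the content of the remark in the excerpt: the case $n=0$ of $I_{Refl}$ is unrestricted $M\!P$, so for every theory $\Gamma$, whenever $A\rightarrow B\in\Gamma$ and $A\in\Gamma$ we get $B\in\Gamma$, i.e.\ $\Gamma R\,\Gamma$. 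Hence $A$ fails in an $\mathsf{FR}$-model and, by soundness of $\mathsf{FR}$ with respect to reflexive frames, $A$ is not provable in $\mathsf{FR}$.

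The main obstacle, if any, is verifying the derivation of $I_{Refl}$ inside $\mathsf{FR}$, since one must manage the parameter $n$ uniformly; but this is routine given the weak deduction theorem and axioms~8 and~9 of $\mathsf{F}_{[\rightarrow,\wedge]}$ which are available in $\mathsf{FR}$. The rest of the argument is essentially a repetition of the template already established, which is precisely why the authors remarked earlier that this kind of final proposition ``is quite general.''
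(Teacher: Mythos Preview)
Your proposal is correct and follows essentially the same approach as the paper: the forward direction checks that the axioms and rules of $\mathsf{FR}_{[\rightarrow]}$ (in particular $I_{Refl}$) are derivable in $\mathsf{FR}$, and the backward direction uses contraposition via the canonical model, noting that the $n=0$ instance of $I_{Refl}$ gives unrestricted $M\!P$ and hence reflexivity of the canonical frame. The paper in fact omits the proof entirely, relying on the earlier remark that the argument of Proposition~\ref{F12} ``is quite general,'' together with the observation preceding the Weak Deduction Theorem that the canonical model is reflexive for exactly the reason you give.
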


\subsection{The implicational fragment of $ {\sf FRT}  $ and $ {\sf FRP}  $}

Similarly to the arguments presented in the previous subsections, the following two theorems can easily be derived:

\begin{thm}
${\sf FRP}_{[\rightarrow]}={\sf FR}_{[\rightarrow]}+  P $.
\end{thm}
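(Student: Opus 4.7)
The plan is to combine the canonical-model techniques already developed for $\mathsf{FR}_{[\rightarrow]}$ and $\mathsf{FP}_{[\rightarrow]}$ in the preceding subsections. Let $\mathsf{FR}_{[\rightarrow]}+P$ denote the Hilbert system obtained by adjoining the axiom scheme $P:~p\rightarrow(\top\rightarrow p)$, with $p$ a propositional variable, to $\mathsf{FR}_{[\rightarrow]}$. Soundness, i.e.\ $\mathsf{FR}_{[\rightarrow]}+P\subseteq\mathsf{FRP}_{[\rightarrow]}$, is routine, since every added principle is valid on reflexive persistent Kripke frames. For the nontrivial direction it suffices, exactly as in Proposition~\ref{F12}, to build a canonical model for $\mathsf{FR}_{[\rightarrow]}+P$ whose underlying frame is reflexive and persistent.

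First I would reprove the Weak Deduction Theorem for $\mathsf{FR}_{[\rightarrow]}+P$; the only new case is when a $P$-instance is invoked, and this is handled via rule~3 in the standard way. An $\mathsf{FR}_{[\rightarrow]}+P$-theory is then an $\mathsf{FR}_{[\rightarrow]}$-theory which additionally contains every instance of $P$. The Lindenbaum-style construction of Lemma~\ref{g11} extends verbatim once one starts $\Pi_0$ from $\Sigma$ together with all provable formulas (now including all $P$-instances). The canonical model is defined exactly as for $\mathsf{FR}_{[\rightarrow]}$, using theories in this new sense, and the Truth Lemma is proved by the same induction as in Lemma~\ref{D}.

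The crucial additional check is that the canonical frame is both reflexive and persistent. Reflexivity is inherited from the $\mathsf{FR}_{[\rightarrow]}$ argument: $I_{Refl}$ with $n=0$ is full $M\!P$, so every theory $\Gamma$ satisfies $\Gamma R\Gamma$. For persistence of atoms, suppose $\Gamma R\Sigma$ and $p\in\Gamma$. Since $\Gamma$ contains the theorem $p\rightarrow(\top\rightarrow p)$ and is closed under full $M\!P$, we obtain $\top\rightarrow p\in\Gamma$; because $\top$ is a theorem it lies in $\Sigma$, and then $\Gamma R\Sigma$ together with $\top\rightarrow p\in\Gamma$ yields $p\in\Sigma$. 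Hence the canonical model is a reflexive persistent Kripke model for $\mathsf{FRP}$, and the closing step as in Proposition~\ref{F12} gives $\mathsf{FRP}_{[\rightarrow]}=\mathsf{FR}_{[\rightarrow]}+P$.

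The main obstacle I expect is precisely this persistence verification; once one observes that closure of $\mathsf{FR}_{[\rightarrow]}$-theories under full $M\!P$ combines cleanly with axiom $P$ and the definition of $R$, the remainder is bookkeeping parallel to the $\mathsf{FR}$ and $\mathsf{FP}$ cases already in the paper.
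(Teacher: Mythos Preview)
Your proposal is correct and follows precisely the approach the paper indicates: it merely says ``Similarly to the arguments presented in the previous subsections, the following two theorems can easily be derived,'' and your sketch spells out exactly that combination of the $\mathsf{FR}_{[\rightarrow]}$ canonical-model construction with the persistence check coming from axiom $P$ (as in Lemma~\ref{FTPR}(b)). The only point worth double-checking is that $\top$ in the $[\rightarrow]$-fragment must be read as a fixed implicational theorem such as $q\rightarrow q$, but your argument goes through unchanged with that reading.
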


\begin{thm}
${\sf FRT}_{[\rightarrow]}={\sf FR}_{[\rightarrow]}+ T $.
\end{thm}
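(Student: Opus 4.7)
The plan is to follow the same template used for $\mathsf{FR}_{[\rightarrow]}$, $\mathsf{FT}_{[\rightarrow]}$, and $\mathsf{FRP}_{[\rightarrow]}$. I define $\mathsf{FR}_{[\rightarrow]}+T$ as the Hilbert system obtained by adjoining to $\mathsf{FR}_{[\rightarrow]}$ the axiom scheme $T$ restricted to $\rightarrow$-formulas. Soundness is clear: $T$ is valid on every transitive Kripke frame and $I_{Refl}$ on every reflexive one. The definitions of $\vdash_{\mathsf{FR}_{[\rightarrow]}+T}$, of $(\mathsf{FR}_{[\rightarrow]}+T)$-theory, and of the empty theory carry over from the previous subsections without alteration; the Weak Deduction Theorem, Lindenbaum lemma and canonical model definition follow the same pattern, and the closure clauses of a theory absorb the new axiom $T$ automatically since $T$ contributes no new rule.

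The only real novelty is in the verification that the canonical model is transitive as well as reflexive. Reflexivity is already inherited from the $\mathsf{FR}_{[\rightarrow]}$ construction, since $I_{Refl}$ with $n=0$ gives unrestricted $M\!P$ inside theories. For transitivity, suppose $\Gamma R \Delta R \Sigma$ and $A\rightarrow B\in\Gamma$, $A\in\Sigma$; I show $B\in\Sigma$. The $T$-instance $(A\rightarrow B)\rightarrow((B\rightarrow B)\rightarrow(A\rightarrow B))$ lies in $\Gamma$, and since $A\rightarrow B\in\Gamma$, unrestricted $M\!P$ yields $(B\rightarrow B)\rightarrow(A\rightarrow B)\in\Gamma$. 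Because $B\rightarrow B$ is a theorem, it belongs to every theory, in particular to $\Delta$; therefore $\Gamma R \Delta$ forces $A\rightarrow B\in\Delta$. Applying $\Delta R \Sigma$ to $A\rightarrow B\in\Delta$ and $A\in\Sigma$ yields $B\in\Sigma$, as required.

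With reflexivity and transitivity of the canonical relation in hand, the Truth Lemma goes through exactly as in Lemma \ref{D}, so completeness follows. The final step, analogous to Proposition \ref{F12}, observes that the canonical model of $\mathsf{FR}_{[\rightarrow]}+T$ is a reflexive transitive Kripke model and hence already a Kripke model for $\mathsf{FRT}$; any $\rightarrow$-formula provable in $\mathsf{FRT}$ must therefore already be provable in $\mathsf{FR}_{[\rightarrow]}+T$, and the converse is soundness. I do not anticipate any serious obstacle: the only step that is not pure repetition is the transitivity argument above, and that argument depends essentially on the availability of unrestricted $M\!P$ inside theories, which was secured by the $I_{Refl}$ rule.
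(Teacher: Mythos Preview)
Your proposal is correct and follows the same template the paper indicates (the paper gives no proof beyond ``similarly to the arguments presented in the previous subsections''). Your explicit transitivity check for the canonical model is exactly the kind of detail the paper suppresses, and it is sound.

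One small remark: your final sentence overstates the role of unrestricted $M\!P$. The $T$-instance $(A\rightarrow B)\rightarrow((B\rightarrow B)\rightarrow(A\rightarrow B))$ is a \emph{theorem}, so passing from $A\rightarrow B\in\Gamma$ to $(B\rightarrow B)\rightarrow(A\rightarrow B)\in\Gamma$ only needs closure of theories under provable implications (weak $M\!P$), not full $M\!P$. This is precisely why the same argument already gives canonical transitivity for $\mathsf{F}_{[\rightarrow]}+T$ without any reflexivity assumption (cf.\ the $\mathsf{FT}$ subsection). So the transitivity verification is not genuinely new to the $\mathsf{FR}$ setting; it is inherited verbatim from the $\mathsf{FT}$ case, and the only contribution of $I_{Refl}$ here is reflexivity.
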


\section{Fragments of the subintuitionistic Logic {\sf WF}}\label{fwf}

We now switch to the weaker logics with neighborhood models, starting with the basic logic {\sf WF}, discussing again both the $[\rightarrow]$- and the  $[\rightarrow,\wedge]$-fragment.

\subsection{The implicational fragment of {\sf WF} }

A \textbf{$[ \rightarrow ]$-Neighborhood Model} of subintuitionistic logic is simply the restriction to $ \rightarrow $-formulas of a Neighborhood model of {\sf WF} as introduced in~\cite{FD} and sketched in Appendix B.

\begin{thm}\label{wf}
The following formulas and rules in $ \mathcal{L} _{\rightarrow} $ axiomatize the fragment $ {\sf WF_{[\rightarrow]}} $:
\begin{enumerate}
\item $  A\rightarrow A$

\item $  \frac{A ~~A\rightarrow B}{B}$

\item $   \frac{A}{B\rightarrow A}$

\item $  \frac{A\rightarrow B~~B\rightarrow C}{A\rightarrow C}$

\item $ \frac{A\rightarrow B~~B \rightarrow A~~C\rightarrow D~~D\rightarrow C}{(A\rightarrow C)\rightarrow (B\rightarrow D)} $

\end{enumerate}
\end{thm}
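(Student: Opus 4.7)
The plan is to mirror the template of the $\mathsf{F_{[\rightarrow]}}$ case, adapted to neighborhood semantics. Soundness of rules 1--5 in $\mathsf{WF}$-neighborhood frames is routine; rule 5 in particular reflects the fact that $w\Vdash A\to B$ depends only on the truth-sets $|A|$ and $|B|$. For the Weak Deduction Theorem I would restrict rules 3, 4, 5 and $M\!P$ to applications with no open assumptions (weak $M\!P$ in the sense of the paper). Under this restriction the induction on derivation length is immediate: at the steps for rules 4 and 5 the conclusion is itself a theorem, and rule 3 then supplies $\vdash A\to B$ for the assumption $A$.

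I would then define a $\mathsf{WF_{[\rightarrow]}}$-theory $\Delta$ as a set closed under (i) transitivity ($A\to B,\,B\to C\in\Delta\Rightarrow A\to C\in\Delta$), (ii) $\vdash A\to B$ and $A\in\Delta\Rightarrow B\in\Delta$, and (iii) containing all theorems. The standard arguments of Section~2 establish that the empty theory is a theory, that $\Delta\vdash A$ iff $A\in\Delta$ for theories $\Delta$, and supply the Lindenbaum extension lemma. The canonical neighborhood model has all theories as worlds, $\Delta\Vdash p$ iff $p\in\Delta$, and neighborhood function
\[
N_\Gamma \;=\; \bigl\{\,(|A|,|B|) : A\to B\in\Gamma\,\bigr\},\qquad |A|=\{\Delta\in W_{\mathsf{WF_{[\rightarrow]}}}: A\in\Delta\}.
\]

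The Truth Lemma is then proved by induction on $C$, and the main obstacle is the converse direction in the case $C=A\to B$. Suppose $\Gamma\Vdash A\to B$; by the induction hypothesis the semantic truth-set of $A$ coincides with $|A|$ and similarly for $B$, so $(|A|,|B|)\in N_\Gamma$. By definition of $N_\Gamma$ there exist $A',B'$ with $|A'|=|A|$, $|B'|=|B|$, and $A'\to B'\in\Gamma$. But $|A|=|A'|$ says that every theory contains $A$ iff it contains $A'$; applying the Lindenbaum lemma contrapositively together with the Weak Deduction Theorem forces $\vdash A\to A'$ and $\vdash A'\to A$, and similarly $\vdash B\to B'$ and $\vdash B'\to B$. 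Rule 5 now yields $\vdash(A'\to B')\to(A\to B)$, and closure (ii) of $\Gamma$ delivers $A\to B\in\Gamma$. This is precisely the step that makes rule 5 indispensable, and it simultaneously confirms that $N_\Gamma$ respects truth-set equality so that $\mathfrak M_{\mathsf{WF_{[\rightarrow]}}}$ is a genuine $\mathsf{WF}$-neighborhood model. The Completeness and Soundness Theorem, and the analog of Proposition~\ref{F12}, then follow exactly as in the $\mathsf{F_{[\rightarrow]}}$ case.
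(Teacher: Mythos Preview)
Your overall strategy matches the paper's, and your identification of rule~5 as the key to the converse direction of the Truth Lemma (using $|A|=|A'|$ to obtain $\vdash A\to A'$ and $\vdash A'\to A$ via Lindenbaum plus the Weak Deduction Theorem, then applying rule~5) is exactly right. However, there is a genuine error in your definition of $\mathsf{WF_{[\rightarrow]}}$-theory: clause~(i), closure under transitivity, must be dropped. You correctly restrict rule~4 to applications without open assumptions, since it is not locally sound in neighborhood semantics (from $(|A|,|B|),(|B|,|C|)\in N\!B(w)$ one cannot conclude $(|A|,|C|)\in N\!B(w)$). Consequently, sets closed under $\vdash_{\mathsf{WF_{[\rightarrow]}}}$ are in general \emph{not} closed under transitivity, and insisting on~(i) breaks the Lindenbaum lemma: take $\Sigma=\{p\to q,\,q\to r\}$; then $\Sigma\nvdash p\to r$, yet every transitively closed extension of $\Sigma$ contains $p\to r$. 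Since the crucial step $|A|=|A'|\Rightarrow{}\vdash A\to A'$ in your Truth Lemma rests on Lindenbaum, the whole argument collapses. The paper's definition of $\mathsf{WF_{[\rightarrow]}}$-theory requires only your clauses~(ii) and~(iii); clause~(i) is neither needed nor allowed.

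A second, smaller gap: your canonical neighborhood function $N_\Gamma=\{(|A|,|B|):A\to B\in\Gamma\}$ does not satisfy the frame condition $X\subseteq Y\Rightarrow(X,Y)\in N\!B(w)$ for arbitrary $X,Y\subseteq W$, since not every subset of $W$ is a truth-set; in particular the root fails to be omniscient in the sense required of $\mathsf{WF}$-frames. The paper repairs this by adding to $N\!B(\Gamma)$ all pairs $(X,Y)$ with $X\subseteq Y$. This does not disturb your Truth Lemma argument: if $(|A|,|B|)$ enters via that extra clause one has $|A|\subseteq|B|$, whence $\vdash A\to B$ and $A\to B\in\Gamma$ directly.
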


The proof runs very much like the proof for $\mathsf{F_{[\rightarrow]}}$. We will just indicate the points where there may be doubts.

 \begin{dfn}\label{wf1}
$\Gamma\vdash_{{\sf WF_{[\rightarrow]}} }\! A$ iff there is a derivation of $A$ from $\Gamma$ using the rules 3, 4, 5 of Theorem \ref{wf} only when there are no assumptions, and the rule 2 of Theorem \ref{wf} only when the derivation of $A\to B$ contains no assumptions.
\end{dfn}  

In the next definition and proposition no changes are needed, and the proof of the proposition remains unchanged.
\begin{dfn}
A set of sentences $ \Delta$ is a \textbf{$\mathsf{WF_{[ \rightarrow ]}}$-theory} if and only if
\begin{enumerate}
\item  $ \vdash A\rightarrow B ~\Rightarrow~( {\rm if} ~ A \in \Delta$, then  $ B \in \Delta), $

\item  If  $ ~\vdash A~~~\Rightarrow~A \in \Delta $.
\end{enumerate}
 \end{dfn}

\begin{prop}\label{a}
$ \Delta $ is a $[ \rightarrow ]$-theory $ \Longleftrightarrow $ $ \Delta\vdash A $ if and only if $ A \in \Delta $.
\end{prop}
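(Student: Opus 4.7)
The plan is to follow the pattern of the corresponding proposition for $\mathsf{F_{[\rightarrow]}}$, with the induction adapted to the five rules of Theorem~\ref{wf}. The ``$\Leftarrow$'' direction is essentially immediate from the definitions. Assume that $\Delta\vdash A$ iff $A\in\Delta$. For condition~(1), suppose $\vdash A\rightarrow B$ and $A\in\Delta$; then $\Delta\vdash A$, and since the derivation of $A\rightarrow B$ uses no assumptions we may apply weak $M\!P$ to conclude $\Delta\vdash B$, hence $B\in\Delta$. For condition~(2), if $\vdash A$ then trivially $\Delta\vdash A$, and so $A\in\Delta$.

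For the ``$\Rightarrow$'' direction, assume $\Delta$ is a $\mathsf{WF_{[\rightarrow]}}$-theory. The right-to-left equivalence ($A\in\Delta\Rightarrow\Delta\vdash A$) is obvious: $A$ counts as a one-line derivation from $\Delta$. For the left-to-right equivalence I would proceed by induction on the length of a derivation of $A$ from $\Delta$, treating each rule in turn. If $A\in\Delta$, there is nothing to show. If $A$ is an instance of Axiom~1, then $\vdash A$ and clause~(2) of the theory definition gives $A\in\Delta$. If $A$ has been inferred by one of the rules 3, 4, 5, then by Definition~\ref{wf1} the entire subderivation leading to that application has no assumptions, so again $\vdash A$ and clause~(2) applies. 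The only non-trivial case is rule~2 (weak $M\!P$): here $A$ was obtained from some $B$ and $B\rightarrow A$, where the derivation of $B\rightarrow A$ contains no assumptions, so $\vdash B\rightarrow A$. The induction hypothesis gives $B\in\Delta$, and clause~(1) then delivers $A\in\Delta$.

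I do not expect a real obstacle here, since the interlocking of the restrictions on rules~3--5 and on $M\!P$ in Definition~\ref{wf1} is exactly what makes the theory-closure conditions (1)--(2) sufficient: the restrictions guarantee that whenever one of these rules produces a formula, either that formula is already a theorem (covered by~(2)) or the step is a detachment by a proved implication (covered by~(1)). The only point that requires attention is to verify, when writing it out in detail, that the ``no assumption'' proviso in the rule applications genuinely ensures $\vdash A$ in cases 3--5, which is immediate since a derivation without assumptions is by definition a proof.
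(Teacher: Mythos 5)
Your proof is correct and follows essentially the same route as the paper: the paper's own argument (carried over unchanged from the corresponding proposition for $\mathsf{F_{[\rightarrow]}}$) is just ``right to left is trivial, the other direction is an easy induction on the length of the proof,'' and your case analysis on Axiom~1, the restricted rules 3--5, and weak $M\!P$ fills in exactly those details. No gap; the observation that the ``no assumptions'' provisos reduce every non-$M\!P$ step to clause~(2) and every $M\!P$ step to clause~(1) is precisely the intended argument.
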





The proof of the weak deduction theorem also needs no changes. Just note that any rule that only applies to theorems cannot influence the validity of this theorem. The proof of the following Lindenbaum-like lemma needs no repetition either. 

\begin{thm}\label{b}
\rm{\textbf{(Weak Deduction Theorem)}} $A\vdash B  $ if and only if $ \vdash A\rightarrow B $.
\end{thm}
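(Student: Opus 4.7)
The plan is to prove this Weak Deduction Theorem by imitating the proof of Theorem \ref{lli} for $\mathsf{F_{[\rightarrow]}}$, adapted to the axiomatization of $\mathsf{WF_{[\rightarrow]}}$. The right-to-left direction is immediate: if $\vdash A\rightarrow B$, then since the derivation of $A\rightarrow B$ has no assumptions, weak $M\!P$ applies to $A$ and $A\rightarrow B$ to yield $A\vdash B$.

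For the left-to-right direction I would proceed by induction on the length of the derivation witnessing $A\vdash_{\mathsf{WF_{[\rightarrow]}}}B$. The base cases are standard: if $B$ is axiom 1 (or more generally a theorem already proved outright) then $\vdash B$, and rule 3 gives $\vdash A\rightarrow B$; if $B$ is the assumption $A$ itself, then axiom 1 directly provides $\vdash A\rightarrow A$. For the induction step the only genuinely non-trivial case is weak $M\!P$: if $A\vdash B$ arises from $A\vdash C$ and $\vdash C\rightarrow B$, then by the induction hypothesis $\vdash A\rightarrow C$. Since $\vdash C\rightarrow B$ by assumption, an application of rule 4 (which is allowed here because both premises are theorems) yields $\vdash A\rightarrow B$.

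The remaining rules 3, 4, and 5 of Theorem \ref{wf} do not need separate treatment, because by the definition of $\vdash_{\mathsf{WF_{[\rightarrow]}}}$ in Definition \ref{wf1} they may only be applied inside assumption-free subderivations. Hence whenever one of these rules is the last step of a derivation of $B$ from $A$, the formula $B$ is in fact a theorem, and the case reduces to the base case handled by rule 3. This is precisely where the change from $\mathsf{F_{[\rightarrow]}}$ matters: the $n$-indexed rule of $\mathsf{F_{[\rightarrow]}}$ has been replaced by the simpler rules 4 and 5 of $\mathsf{WF_{[\rightarrow]}}$, and the substantive step of the old proof (threading the induction hypothesis through the $\bar{A_n}$-prefixed forms) collapses to a single appeal to the transitivity rule 4.

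The main obstacle, though mild, is to verify carefully that the restrictions placed on rules 3, 4, 5 in Definition \ref{wf1} genuinely suffice to keep the induction going — in particular, that no application of rule 5 could in principle arise in a context that would require a deduction-theorem step analogous to the MP case. Because the definition forces each such application to occur in a subderivation with empty assumption set, the issue does not arise, and the induction goes through exactly as sketched.
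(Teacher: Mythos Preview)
Your proposal is correct and matches the paper's own treatment essentially verbatim: the paper simply remarks that the proof of Theorem~\ref{lli} carries over unchanged, with the observation that rules applied only to theorems (here rules 3, 4, 5 under Definition~\ref{wf1}) cannot affect the induction, which is exactly the point you spell out in your last two paragraphs. Your handling of the weak $M\!P$ step via rule~4 is the intended argument.
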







\begin{thm}
\label{1c}
If $ \Sigma \nvdash D$ then there is a $[ \rightarrow ]$-theory $ \Delta $ such that $\Delta \supseteq \Sigma, ~D \notin \Delta$.
\end{thm}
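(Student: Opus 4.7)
The plan is to adapt the Lindenbaum-style construction from Lemma \ref{g} to this weaker logic. First I would enumerate all $[\rightarrow]$-formulas with infinite repetitions as $B_0, B_1, \ldots$ and set $\Pi_0 = \Sigma \cup \{A \mid \vdash_{\sf WF_{[\rightarrow]}} A\}$. Since adding theorems to $\Sigma$ does not enlarge the set of derivable consequences, $\Sigma \nvdash D$ gives $\Pi_0 \nvdash D$. Then I would define $\Pi_{n+1} = \Pi_n \cup \{B_n\}$ if $\Pi_n, B_n \nvdash D$, and $\Pi_{n+1} = \Pi_n$ otherwise, and put $\Delta := \bigcup_n \Pi_n$. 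Maintaining the invariant $\Pi_n \nvdash D$ at each stage is straightforward by the case split in the construction.

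Next I would verify $D \notin \Delta$. Since derivations are finite, $\Delta \vdash D$ would force $\Pi_n \vdash D$ for some $n$, contradicting the invariant; and explicitly $D$ itself is never added, because if $B_n = D$ then $\Pi_n, D \vdash D$ trivially, so the construction skips it.

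The main work is showing $\Delta$ is a $[\rightarrow]$-theory. Condition (2) of the definition, that all theorems lie in $\Delta$, holds by construction of $\Pi_0$. For condition (1), suppose $\vdash A \rightarrow B$, $A \in \Delta$, and toward contradiction $B \notin \Delta$. Choose $n$ large enough that $A \in \Pi_n$ and $B_n = B$; such an $n$ exists because $B$ is enumerated infinitely often. Then $B$ was not added, so $\Pi_n, B \vdash D$. Because rules 3, 4, 5 and MP in this system can only combine assumptions with theorems, a derivation from $\Pi_n \cup \{B\}$ of $D$ must either be entirely from $\Pi_n$ (contradicting the invariant $\Pi_n \nvdash D$) or route through $B$ via some theorem $\vdash B \rightarrow D$ obtained by chaining rule 4 on theorems. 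In the latter case, combining $\vdash A \rightarrow B$ with $\vdash B \rightarrow D$ using rule 4 (the transitivity rule of $\mathsf{WF_{[\rightarrow]}}$) yields $\vdash A \rightarrow D$; together with $A \in \Pi_n$ and weak $M\!P$ this gives $\Pi_n \vdash D$, again contradicting the invariant.

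The delicate step, and the one I expect to be the main obstacle, is justifying the characterization of $\Pi_n, B \vdash D$ that I used: namely, that any such derivation either proceeds entirely inside $\Pi_n$ or reduces to the existence of a theorem $\vdash B \rightarrow D$. This depends on the tight restriction that rules 3, 4, 5 and the $A \rightarrow B$ premise of $M\!P$ require no undischarged assumptions, so the elements of $\Pi_n \cup \{B\}$ used in a derivation can only interact with theorems by weak $M\!P$. Once that analysis of derivations is in place, the rest of the proof is routine, parallel to the earlier treatment in Lemma \ref{g}.
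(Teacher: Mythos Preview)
Your construction is exactly the paper's: the same enumeration, the same $\Pi_0=\Sigma\cup\mathsf{WF_{[\rightarrow]}}$, the same case split, and the same invariant $\Pi_n\nvdash D$. The paper simply remarks that the proof is identical to that of Lemma~\ref{g} and gives no further details.

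The one place where you diverge is the verification of closure under provable implications. You embark on a structural analysis of derivations from $\Pi_n\cup\{B\}$, arguing for a dichotomy (either the derivation avoids $B$, or $\vdash B\rightarrow D$). That dichotomy is in fact true in $\mathsf{WF_{[\rightarrow]}}$, because assumptions can only interact with theorems via weak $M\!P$ and chains of weak $M\!P$ collapse by rule~4; but it is unnecessary work. The paper's ``same mechanism'' is the direct cut: from $A\in\Pi_n$ and $\vdash A\rightarrow B$ one has $\Pi_n\vdash B$ by weak $M\!P$, and then $\Pi_n,B\vdash D$ together with $\Pi_n\vdash B$ gives $\Pi_n\vdash D$ by substituting the derivation of $B$ for each use of $B$ as an assumption. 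This avoids the derivation analysis you flag as the ``main obstacle'' entirely.
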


\begin{dfn}
Let  $ W_{{\sf WF_{[\rightarrow]}}}$ be the set of all  $ [\rightarrow] $-theories. Given a formula $ A $, we define the set $|| A ||$ as follows,
$$ ||A|| = \left\lbrace  \Delta~|~\Delta \in W_{{\sf WF_{[\rightarrow]}}}, ~ A \in \Delta\right\rbrace  .$$
\end{dfn}

\begin{lem}
\label{1kk}
Let C and D are formulas. Then
$$ || C|| \subseteq  || D|| ~~{\rm iff}~~\vdash C\rightarrow D .$$

\end{lem}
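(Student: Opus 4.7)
The plan is to prove both implications by exploiting the machinery already developed for $\mathsf{WF}_{[\rightarrow]}$, namely the definition of a $[\rightarrow]$-theory, the Weak Deduction Theorem (Theorem \ref{b}), and the Lindenbaum-style extension result (Theorem \ref{1c}).

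For the right-to-left direction, suppose $\vdash C\rightarrow D$. Given any $\Delta\in\|C\|$, by definition $\Delta$ is a $[\rightarrow]$-theory with $C\in\Delta$. Clause (1) of the definition of $[\rightarrow]$-theory, together with $\vdash C\rightarrow D$, immediately yields $D\in\Delta$, hence $\Delta\in\|D\|$. This gives $\|C\|\subseteq\|D\|$.

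For the left-to-right direction I would argue by contraposition. Suppose $\nvdash C\rightarrow D$. By the Weak Deduction Theorem (Theorem \ref{b}), this is equivalent to $C\nvdash D$. Applying Theorem \ref{1c} with $\Sigma=\{C\}$, there exists a $[\rightarrow]$-theory $\Delta\supseteq\{C\}$ with $D\notin\Delta$. Such a $\Delta$ witnesses $\Delta\in\|C\|$ but $\Delta\notin\|D\|$, so $\|C\|\not\subseteq\|D\|$, as required.

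Since both directions reduce to standard applications of previously established facts, there is no real obstacle: the only point one has to be slightly alert to is that Theorem \ref{1c} and Theorem \ref{b} are formulated with the restricted notion of derivation from Definition \ref{wf1}, but this is exactly the derivability relation used in clause (1) of the definition of a $[\rightarrow]$-theory, so the two sides match up without friction.
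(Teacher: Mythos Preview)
Your proof is correct and follows essentially the same approach as the paper: both directions are handled exactly as in the paper's own argument, using the definition of $[\rightarrow]$-theory for the right-to-left direction and the Weak Deduction Theorem together with the Lindenbaum-style extension (Theorem~\ref{1c}) for the contrapositive of the left-to-right direction.
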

\begin{proof}  Let $ \nvdash C\rightarrow D $. Then by the Weak Deduction Theorem $ C\nvdash D $. Let $ \Sigma= \left\lbrace C\right\rbrace  $, then by Theorem \ref{1c},  there exist a  $[ \rightarrow ]$-theory $ \Gamma $ such that, $ C \in \Gamma $ and $ D \notin \Gamma $. That is $  || C|| \nsubseteq  || D||  $.

Now let $ \vdash C\rightarrow D $. Assume $ \Gamma \in W_{{\sf WF_{[\rightarrow]}}}, ~C\in \Gamma $. Then by definition of $ [\rightarrow] $-theory $ D \in \Gamma $. So $  || C || \subseteq  || D||$.
\end{proof}

Next we define the canonical model for ${\sf WF_{[\rightarrow]}}$.

\begin{dfn}
The \textbf{Canonical model} $\mathfrak{M}^{{\sf WF_{[\rightarrow]}}}=\langle W_{{\sf WF_{[\rightarrow]}}} , g, N\!B_{{\sf WF_{[\rightarrow]}}}, V\rangle$ of ${\sf WF_{[\rightarrow]}}$ is defined by:
\begin{enumerate}
\item $g$ is the set of theorems of ${\sf WF_{[\rightarrow]} }$,
\item For each $ \Gamma \in W_{{\sf WF_{[\rightarrow]}}} $ and all formulas $ A $ and $ B $, if $A\rightarrow B \in \Gamma  $ then $( ||  A||  , ||  B ||  ) {\in} N\!B_{{\sf WF_{[\rightarrow]}}} (\Gamma )$,
\item  For each $ \Gamma \in W_{{\sf WF_{[\rightarrow]}}} $  and $ X, Y \in \mathcal{P}(W_{{\sf WF_{[\rightarrow]}}})  $, if $ X\subseteq Y $ then $(X, Y ) \in  N\!B_{{\sf WF_{[\rightarrow]}}} (\Gamma )$,
\item If $ p\in P$, then $ V(p)= ||  p||   =\left\lbrace \Gamma ~|~\Gamma \in  W_{{\sf WF_{[\rightarrow]}}} ~and~ p \in \Gamma\right\rbrace  .$
\end{enumerate}
\end{dfn}
It is easy to see that in the canonical model $g={\sf WF_{[\rightarrow]}} $ is omniscient. We now use a slightly more convenient way than in~\cite{FD} and later articles to reach a Truth Lemma.

\begin{lem}
\label{xxxx}
$( ||  A||  , ||  B ||  ) {\in} N\!B_{{\sf WF_{[\rightarrow]}}} (\Gamma )$ iff $ A \rightarrow B \in\Gamma$.
\end{lem}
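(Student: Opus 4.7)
The lemma has two directions, and the right-to-left one is essentially built into the canonical model: clause 2 of the definition of $\mathfrak{M}^{{\sf WF_{[\rightarrow]}}}$ says exactly that whenever $A\rightarrow B\in\Gamma$ the pair $(||A||,||B||)$ is in $N\!B_{{\sf WF_{[\rightarrow]}}}(\Gamma)$. So the plan reduces to proving the forward direction: if $(||A||,||B||)\in N\!B_{{\sf WF_{[\rightarrow]}}}(\Gamma)$, then $A\rightarrow B\in\Gamma$.

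My reading of the canonical model definition is that $N\!B_{{\sf WF_{[\rightarrow]}}}(\Gamma)$ is the \emph{smallest} set of pairs closed under clauses 2 and 3 (otherwise the model would not be well-defined). With this minimality, I would argue by cases on how the pair $(||A||,||B||)$ got into $N\!B_{{\sf WF_{[\rightarrow]}}}(\Gamma)$.

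\textbf{Case 1} (from clause 2): there exist formulas $C,D$ with $C\rightarrow D\in\Gamma$ such that $||C||=||A||$ and $||D||=||B||$. By Lemma~\ref{1kk}, the four provabilities $\vdash A\rightarrow C$, $\vdash C\rightarrow A$, $\vdash B\rightarrow D$ and $\vdash D\rightarrow B$ hold. Now I would apply rule 5 of Theorem~\ref{wf} to the instance where its antecedent pair is $C,A$ and its succedent pair is $D,B$; that yields $\vdash (C\rightarrow D)\rightarrow(A\rightarrow B)$. Since $\Gamma$ is a theory containing $C\rightarrow D$, closure under provable implication gives $A\rightarrow B\in\Gamma$.

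\textbf{Case 2} (from clause 3): $||A||\subseteq ||B||$, in which case Lemma~\ref{1kk} directly delivers $\vdash A\rightarrow B$, and so $A\rightarrow B\in\Gamma$ since theories contain all theorems.

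The main thing to watch is that rule 5 is precisely what the proof needs, and the reason such a rule was placed in the axiomatization of $\mathsf{WF_{[\rightarrow]}}$ is almost surely to make this step go through; the minor obstacle is only keeping the variable matching in rule 5 straight so that the conclusion is $(C\rightarrow D)\rightarrow(A\rightarrow B)$ rather than some transposed version. Apart from this, everything is forced by Lemma~\ref{1kk} and the definitions.
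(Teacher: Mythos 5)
Your proof is correct and follows the same route as the paper: the right-to-left direction is by definition of the canonical model, and the $||A||\subseteq||B||$ case is handled exactly as in the paper via Lemma~\ref{1kk}. In fact your Case 1 --- reading $N\!B_{{\sf WF_{[\rightarrow]}}}(\Gamma)$ as the smallest set closed under the two clauses and then using rule 5 to pass from $C\rightarrow D\in\Gamma$ with $||C||=||A||$ and $||D||=||B||$ to $\vdash(C\rightarrow D)\rightarrow(A\rightarrow B)$ and hence $A\rightarrow B\in\Gamma$ --- is a genuine case that the paper's one-line proof leaves implicit, and your instantiation of rule 5 is the right one, so your write-up is actually the more complete of the two.
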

\begin{proof} From right to left follows immediately from the definition of canonical model. For the other direction assume $( || A||  ,  ||  B||   ) \in N\!B_{{\sf WF_{[\rightarrow]}}} (\Gamma )$. If $ ||  A||  \subseteq  ||  B||   $, then by Lemma \ref{1kk}, $ \vdash A\rightarrow B $ and so $ A\rightarrow B \in \Gamma  $.
\end{proof}

\begin{lem}\label{uuu}
\em\textbf{(Truth lemma)} For each $ \Gamma \in  W_{{\sf WF_{[\rightarrow]}}}$ and for every formula $  E $, $$ \Gamma\Vdash   E~{\rm iff}~ E \in \Gamma .$$
\end{lem}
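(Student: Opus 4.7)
The plan is to prove the Truth Lemma by induction on the complexity of the formula $E$, where the only two cases are atomic formulas and implications (since we are in $\mathcal{L}_\rightarrow$).

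For the base case, when $E$ is a propositional variable $p$, the equivalence $\Gamma \Vdash p \Leftrightarrow p \in \Gamma$ is immediate from clause 4 of the definition of the canonical model, which sets $V(p) = \|p\|$.

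For the inductive step, suppose $E = A \rightarrow B$ and assume the induction hypothesis for $A$ and $B$, which gives us that the semantic extension of $A$ in the canonical model coincides with $\|A\|$, and likewise for $B$. In neighborhood semantics, $\Gamma \Vdash A \rightarrow B$ holds iff $(\|A\|, \|B\|) \in N\!B_{\mathsf{WF_{[\rightarrow]}}}(\Gamma)$, where these sets are understood as the extensions in the model. By the induction hypothesis these extensions are exactly $\|A\|$ and $\|B\|$ as defined syntactically. So the equivalence reduces to $(\|A\|, \|B\|) \in N\!B_{\mathsf{WF_{[\rightarrow]}}}(\Gamma) \Leftrightarrow A \rightarrow B \in \Gamma$, which is precisely Lemma \ref{xxxx}.

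The step I expect to require the most care is ensuring that the induction hypothesis genuinely matches the semantic extensions against the syntactic sets $\|A\|$, $\|B\|$; this is what allows the neighborhood clause to be evaluated against pairs of syntactic extensions. The heavy lifting, however, has already been done in Lemma \ref{xxxx}, whose proof rests on Lemma \ref{1kk} and the monotonicity clause 3 of the canonical neighborhood: if $(\|A\|,\|B\|) \in N\!B(\Gamma)$ holds merely because $\|A\| \subseteq \|B\|$, then Lemma \ref{1kk} gives $\vdash A \rightarrow B$, and rule 3 together with closure of $\Gamma$ under provable implications forces $A \rightarrow B \in \Gamma$; otherwise $(\|A\|,\|B\|)$ was placed in $N\!B(\Gamma)$ precisely because $A \rightarrow B \in \Gamma$. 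Hence no further work is needed at the implication step beyond invoking these earlier results.
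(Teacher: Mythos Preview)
Your proof is correct and matches the paper's argument exactly: induction on $E$, the base case by the definition of $V$, and the implication step reduced to Lemma~\ref{xxxx} once the induction hypothesis identifies the semantic truth sets $A^{\mathfrak{M}},B^{\mathfrak{M}}$ with $\|A\|,\|B\|$. One small caveat about your informal recap of Lemma~\ref{xxxx} (not the Truth Lemma itself): your ``otherwise'' clause overlooks the possibility that $(\|A\|,\|B\|)$ entered $N\!B(\Gamma)$ via clause~2 from some \emph{other} $C\rightarrow D\in\Gamma$ with $\|C\|=\|A\|$ and $\|D\|=\|B\|$; handling that case is exactly where Lemma~\ref{1kk} and Rule~5 (the equivalence rule) are needed to transfer membership to $A\rightarrow B\in\Gamma$.
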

\begin{proof} By induction on $E$. The base case is trivial. Let $E:=A\rightarrow B  $, then,

  ~~~~~~~~~~~~~~~~$\Gamma \Vdash  A\rightarrow B~~~~~~~~~~~~\Longleftrightarrow~ (A^{\mathfrak{M}^{{\sf WF_{[\rightarrow]}}}}, B^{\mathfrak{M}^{{\sf WF_{[\rightarrow]}}}} ) \in N\!B_{{\sf WF_{[\rightarrow]}}}(\Gamma) $

~~~~~~(by induction hypothesis) $~~\,\Longleftrightarrow~ (||A|| ,  || B||) \in N\!B_{{\sf WF_{[\rightarrow]}}}(\Gamma)~$

~~~~~~~~~~~~ (by Lemma~\ref{xxxx}) $~~~~~~\,\Longleftrightarrow~A\rightarrow B \in \Gamma .$
  \end{proof}

\begin{thm}
\em\textbf{(Completeness and Soundness Theorem)} $ \Sigma\vdash_{{\sf WF_{[\rightarrow ]}} }\!A $ if and only if $ \Sigma\Vdash_{{\sf WF_{[\rightarrow ]}}}\!A $.
\end{thm}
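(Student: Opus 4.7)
The plan is to mirror the completeness proof for $\mathsf{F_{[\rightarrow]}}$ (Theorem~\ref{FF}), using the canonical model $\mathfrak{M}^{\mathsf{WF_{[\rightarrow]}}}$ and the supporting lemmas that have already been assembled. Soundness is handled by induction on derivations; completeness is a one-line contrapositive using the Lindenbaum-like Theorem~\ref{1c} and the Truth Lemma~\ref{uuu}.

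For soundness (left to right), I would argue by induction on the length of a derivation witnessing $\Sigma\vdash_{\mathsf{WF_{[\rightarrow]}}}A$. Axiom 1 is immediate from the subset-closure clause satisfied by any $[\rightarrow]$-neighborhood model, since $\|A\|\subseteq\|A\|$ forces $(\|A\|,\|A\|)\in N\!B(\Gamma)$ at every world $\Gamma$. The rules that are restricted to empty-assumption premises (weak $M\!P$ and rules 3, 4, and 5) preserve validity at the base point $g$ because $g$ is omniscient: a theorem $\vdash C\rightarrow D$ reads semantically as $\|C\|\subseteq\|D\|$, so the premises of each rule translate to inclusions between truth sets that yield the inclusion needed for the conclusion. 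The genuinely new case relative to $\mathsf{F_{[\rightarrow]}}$ is rule 5, whose soundness reduces to the observation that $\vdash A\rightarrow B$ together with $\vdash B\rightarrow A$ forces $\|A\|=\|B\|$, and similarly $\|C\|=\|D\|$, so that $(\|A\rightarrow C\|,\|B\rightarrow D\|)$ sits in $N\!B(g)$ via subset-closure.

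For completeness (right to left), I would contrapose. Assume $\Sigma\nvdash_{\mathsf{WF_{[\rightarrow]}}}A$. By Theorem~\ref{1c}, extend $\Sigma$ to a $[\rightarrow]$-theory $\Pi$ with $\Pi\supseteq\Sigma$ and $A\notin\Pi$. Viewing $\Pi$ as a world of the canonical model $\mathfrak{M}^{\mathsf{WF_{[\rightarrow]}}}$, the Truth Lemma~\ref{uuu} gives $\Pi\Vdash B$ for every $B\in\Sigma$ (since $\Sigma\subseteq\Pi$) while $\Pi\nVdash A$, so $\Sigma\nVdash_{\mathsf{WF_{[\rightarrow]}}}A$. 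The main obstacle is on the soundness side, in making precise that the omniscience of $g$ together with subset-closure of $N\!B$ validates rule 5; once that is in hand, completeness is essentially a one-line assembly of Theorem~\ref{1c} and Lemma~\ref{uuu}, and the remainder is the routine bookkeeping that Lemma~\ref{xxxx} has already reduced to a correspondence between membership of $A\rightarrow B$ in $\Gamma$ and membership of $(\|A\|,\|B\|)$ in $N\!B(\Gamma)$.
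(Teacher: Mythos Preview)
Your proposal is correct and follows essentially the same route as the paper: soundness by a routine induction on derivations (the paper simply calls this direction ``easy''), and completeness by the contrapositive using the Lindenbaum-type extension (Theorem~\ref{1c}) together with the Truth Lemma~\ref{uuu}, exactly as in the template proof of Theorem~\ref{FF}. If anything you supply more detail than the paper does, in particular your explanation of why omniscience of $g$ forces $A^{\mathfrak M}=B^{\mathfrak M}$ from the validity of $A\leftrightarrow B$ and hence $(A\rightarrow C)^{\mathfrak M}=(B\rightarrow D)^{\mathfrak M}$, which is precisely the content needed for the soundness of rule~5.
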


And as in the case of {\sf F} we get

\begin{prop}
A formula $  A \in \mathcal{L}_{[\rightarrow]}  $ is provable in ${\sf WF_{[\rightarrow]}} $ iff it is provable in ${\sf WF}  $.
\end{prop}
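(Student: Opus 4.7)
The left-to-right direction is routine: axioms~1 and rules~2--5 of Theorem~\ref{wf} are all sound for, and derivable in, the full system $\mathsf{WF}$, so any $\mathsf{WF_{[\rightarrow]}}$-proof of a $\rightarrow$-formula is already a $\mathsf{WF}$-proof of the same formula.

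For the converse (conservativity) I would reason by contraposition, in exact analogy with Proposition~\ref{F12}. Suppose $A \in \mathcal{L}_{[\rightarrow]}$ is not provable in $\mathsf{WF_{[\rightarrow]}}$. By the Completeness Theorem for $\mathsf{WF_{[\rightarrow]}}$ together with the Truth Lemma (Lemma~\ref{uuu}), there is some $\Gamma \in W_{\mathsf{WF_{[\rightarrow]}}}$ with $\Gamma \not\Vdash A$ in the canonical model $\mathfrak{M}^{\mathsf{WF_{[\rightarrow]}}}$. I now view $\mathfrak{M}^{\mathsf{WF_{[\rightarrow]}}}$ as a model for the full language $\mathcal{L}$: the universe $W_{\mathsf{WF_{[\rightarrow]}}}$, the omniscient point $g$ and the neighborhood function $N\!B_{\mathsf{WF_{[\rightarrow]}}}$ are kept as they are, while the valuation $V(p) = ||p||$ is extended to compound formulas by the standard neighborhood-model clauses for $\wedge$, $\vee$ and $\bot$. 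Since $A$ contains only $\rightarrow$, the induction of Lemma~\ref{uuu} still applies unchanged to $A$, yielding $\Gamma \not\Vdash A$ in the enriched model.

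The main obstacle is to check that this enriched structure really is a $\mathsf{WF}$ neighborhood model, i.e.\ that $N\!B_{\mathsf{WF_{[\rightarrow]}}}$ satisfies all the frame conditions of $\mathsf{WF}$ listed in Appendix~\ref{neighbor}. The monotonicity (subset) clause is built directly into the definition of the canonical model. Each remaining closure condition concerns pairs which, via Lemma~\ref{xxxx}, are either trivial subset pairs or else of the form $(||A||,||B||)$ with $A \rightarrow B \in \Gamma$, and in the latter case the condition translates into a closure principle on the theory $\Gamma$ itself. These principles are precisely what the rules of $\mathsf{WF_{[\rightarrow]}}$ guarantee: rule~4 of Theorem~\ref{wf} yields the transitivity-type closure $(||A||,||B||), (||B||,||C||) \in N\!B_{\mathsf{WF_{[\rightarrow]}}}(\Gamma) \Rightarrow (||A||,||C||) \in N\!B_{\mathsf{WF_{[\rightarrow]}}}(\Gamma)$, while rule~5 produces the $\rightarrow$-congruence condition for equivalent pairs of truth sets.

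Once all of the Appendix~\ref{neighbor} conditions have been verified in this manner, $\mathfrak{M}^{\mathsf{WF_{[\rightarrow]}}}$ is a genuine $\mathsf{WF}$-model that refutes $A$ at $\Gamma$; by soundness of $\mathsf{WF}$, $A$ is then not provable in $\mathsf{WF}$, completing the conservativity argument. The expected bottleneck is simply the bookkeeping of this final verification, since each $\mathsf{WF}$-frame condition has to be matched up with a rule or axiom of $\mathsf{WF_{[\rightarrow]}}$ via Lemma~\ref{xxxx}; no new ideas beyond those used for $\mathsf{F_{[\rightarrow]}}$ should be required.
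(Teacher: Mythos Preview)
Your overall strategy is exactly the paper's: show that the canonical model $\mathfrak{M}^{\mathsf{WF_{[\rightarrow]}}}$ is already a neighborhood model for full $\mathsf{WF}$ and then invoke soundness. However, you have misread Appendix~\ref{neighbor}. A $\mathsf{WF}$ neighborhood frame (Definition~\ref{11}) carries only two conditions: (i) $X\subseteq Y\Rightarrow (X,Y)\in N\!B(w)$ for every $w$, and (ii) omniscience of $g$, namely $N\!B(g)=\{(X,Y)\mid X\subseteq Y\}$. There is \emph{no} transitivity-type closure and no congruence condition on arbitrary frames; those are the extra properties characterising the extensions $\mathsf{WFI}$, $\mathsf{WF_N}$, etc.\ (Proposition~\ref{4bprop}), so your discussion of rules~4 and~5 producing such closures is beside the point here.

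Condition~(i) is, as you note, built into the canonical-model definition. What you do not mention, and what is the one substantive check, is condition~(ii): for the empty theory $g$ one must see that any pair put into $N\!B(g)$ via clause~2 of the canonical model, i.e.\ any $(||A||,||B||)$ with $\vdash_{\mathsf{WF_{[\rightarrow]}}} A\to B$, already satisfies $||A||\subseteq||B||$; this follows at once from Lemma~\ref{1kk}. (Rule~5 does matter, but elsewhere: it is what makes Lemma~\ref{xxxx} well-defined when $(||A||,||B||)=(||A'||,||B'||)$ for syntactically different formulas.) So the bookkeeping is shorter than you anticipated, not longer.
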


This is a good moment to discuss the strength of the equivalence rule (5) in the system $\mathsf{WF_{[\rightarrow]}}$. This strength is considerable; the system $\mathsf{WF_{[\rightarrow]}^-}$ using rules (1)-(4) only is very weak.
\begin{prop} $\mathsf{WF_{[\rightarrow]}^-}\vdash C$ iff $C$ is of the form $\bar{B_n}\rightarrow (A\rightarrow A)$.
\end{prop}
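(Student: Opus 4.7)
The plan is to introduce the set
\[
S \;=\; \{\,\bar{B_n}\rightarrow(A\rightarrow A)\,:\,n\geq 0,\ A,B_1,\dots,B_n\text{ formulas}\,\}
\]
and show that the theorems of $\mathsf{WF_{[\rightarrow]}^-}$ coincide exactly with $S$.

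The easy direction (every formula in $S$ is provable) is a one-liner: start from axiom~(1), $A\rightarrow A$, and apply rule~(3) $n$ times to prepend the antecedents $B_n,B_{n-1},\dots,B_1$ in turn. This derivation uses no assumptions, so the restrictions on rules~(2)--(5) are irrelevant.

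For the harder direction I would induct on the length of an $\mathsf{WF_{[\rightarrow]}^-}$-proof, checking closure of $S$ under each of the available rules. Axiom~(1) sits in $S$ with $n=0$. Rule~(3) turns a member $X=\bar{B_n}\rightarrow(A\rightarrow A)$ of $S$ into $Y\rightarrow X$, which is again in $S$ with parameter $n{+}1$. For rule~(4), with premises $X\rightarrow Y\in S$ and $Y\rightarrow Z\in S$, I would do a case split on the shape of $Y\rightarrow Z$: if it is of the degenerate form $D\rightarrow D$ (the $n{=}0$ case) then $Y=Z=D$, so the conclusion $X\rightarrow Z$ equals $X\rightarrow Y$ and is already in $S$; otherwise the representation has $m\geq 1$, so $Y$ is the first antecedent and $Z$ is the suffix $B_2\rightarrow\cdots\rightarrow(B_m\rightarrow(A\rightarrow A))$, which is itself in $S$, whence $X\rightarrow Z$ is obtained by one ``prepend'' step and lies in $S$. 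Weak MP is handled by the same dichotomy on $X\rightarrow Y\in S$: either $X=Y$ (so $Y=X\in S$ by the other premise), or $Y$ is the inner suffix of the $m\geq 1$ representation of $X\rightarrow Y$ and is therefore already in $S$. Since the statement is about theorems, both premises of MP are assumption-free, so the weak-MP side condition is vacuous.

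The main obstacle is purely notational: one must separate the degenerate $n=0$ representation in the analyses of rules~(4) and MP, because there $Y\rightarrow Z$ (resp.\ $X\rightarrow Y$) is literally $D\rightarrow D$ and the ``strip the leading antecedent'' decomposition I used for $m\geq 1$ collapses. Once that case is isolated, the induction is routine, and nothing deeper than this case analysis is required.
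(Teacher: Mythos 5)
Your proof is correct and follows essentially the same route as the paper: induction on proof length for the hard direction, with Rule 3 trivially preserving the form, and repeated applications of Rule 3 to Axiom 1 for the easy direction. Your explicit dichotomy for Rule 4 and weak MP (degenerate $D\rightarrow D$ versus $m\geq 1$) is just a spelled-out version of the paper's parenthetical remark that when $A=B$ the rules have no effect, while otherwise the consequent of a formula of the right form again has the right form.
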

\begin{proof} $\Rightarrow:$ By induction on the length of the proof. For length 1 we have an axiom $A\rightarrow A$, and hence $n=0$. For Rule 3 it is immediately obvious that it keeps the given form intact. For Rules 2 and 4 note that in those rules, if $A\rightarrow B$ has the right form, then so does $B$ (except when $A=B$, but then the rules don't have any effect). From this observation it is clear that the proper form remains intact when applying Rules 2 and 4.

This extreme weakness persists when we add more connectives and axioms only. Even adding some standard rules leaves us with a weak system.

\noindent $\Leftarrow:$ Formulas of the given form can be produced by applying Rule 3 repeatedly to the Axiom 1.
\end{proof} 

One may further note that Axiom 1 and Rule 3 are consequences of the weak deduction theorem, and Rule 4 is a consequence of the weak deduction theorem plus restricted modus ponens. This means that one can see the whole system $\mathsf{WF_{[\rightarrow]}^-}$ as representing the weak deduction theorem plus weak modus ponens.

\subsection{The implication-conjunction fragment of {\sf WF}}
In this subsection we prove strong completeness for a system $ {\sf WF_{[\rightarrow, \wedge]}}  $ characterizing the implication-conjunction fragment of {\sf WF}. $[\rightarrow , \wedge]$-neighborhood frames and models are defined in the obvious manner as are truth, validity and valid consequence.

\begin{thm}
The axiomatization of $ {\sf WF_{[\rightarrow, \wedge]}}  $ is obtained by adding to the axioms and rules  of Thm.\ \ref{wf} in $\mathcal{L}_{\rightarrow,\wedge}$ the axioms and rules
\begin{enumerate}
\item[6.] $ A\wedge B\rightarrow A $
\item[7.] $ A\wedge B\rightarrow B $
\item[8.] $\dfrac{A~~B}{A\wedge B} $
\item[9.] $\dfrac{A\rightarrow B~~A\rightarrow C}{A\rightarrow B\wedge C}$
\end{enumerate}
\end{thm}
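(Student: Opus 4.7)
The plan is to mirror the completeness argument for $\mathsf{WF_{[\rightarrow]}}$ given in Theorem \ref{wf}, adjusting definitions to account for $\wedge$. First I define $\vdash_{\mathsf{WF_{[\rightarrow,\wedge]}}}$: rules 3, 4, 5, and 9 are restricted to derivations with no assumptions; rule 2 keeps the weak-MP restriction; only rule 8 (adjunction) may be applied freely. The notion of $\mathsf{WF_{[\rightarrow,\wedge]}}$-theory is obtained from the analog of Definition \ref{wf1} by adding the clause that $A,B\in\Delta$ implies $A\wedge B\in\Delta$. Combined with axioms 6 and 7 and closure under provable implications, this yields the biconditional $A\wedge B\in\Delta$ iff ($A\in\Delta$ and $B\in\Delta$), which is exactly what a Truth Lemma for conjunction will require.

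Next I verify the Weak Deduction Theorem. The only genuinely new induction case is for rule 8: from $A\vdash E$ and $A\vdash F$, the induction hypothesis delivers $\vdash A\rightarrow E$ and $\vdash A\rightarrow F$, and applying rule 9 to these theorems gives $\vdash A\rightarrow E\wedge F$. Rule 9 itself is restricted to assumption-free derivations, so its induction step is absorbed by the ``$B$ a theorem'' base case via rule 3. The proposition characterizing theories and the Lindenbaum-style extension lemma (analog of Theorem \ref{1c}) transfer by the same enumeration argument, with the extra step of checking that the new adjunction closure survives the inductive construction.

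The canonical model $\mathfrak{M}^{\mathsf{WF_{[\rightarrow,\wedge]}}}$ is defined exactly as in the previous subsection. Lemmas \ref{1kk} and \ref{xxxx} carry over verbatim, since their proofs use only the Weak Deduction Theorem and the extension lemma. The Truth Lemma is then proved by induction on $E$: the implication case is identical to that of Lemma \ref{uuu}, while for $E:=A\wedge B$ the standard neighborhood clause $\Gamma\Vdash A\wedge B$ iff $\Gamma\Vdash A$ and $\Gamma\Vdash B$ combines with the induction hypothesis and the theory biconditional above. Completeness, soundness, and conservativity over $\mathsf{WF}$ then follow as in the previous subsection. The main point I expect to have to monitor is that the canonical neighborhood function satisfies whatever extra semantic condition rule 9 imposes at the origin $g$; since $g$ is the set of theorems, rule 9 itself ensures that $\vdash A\rightarrow B$ and $\vdash A\rightarrow C$ yield $\vdash A\rightarrow B\wedge C$, which translates directly into the needed closure of $N\!B(g)$ on pairs coming from formulas via Lemma \ref{xxxx}.
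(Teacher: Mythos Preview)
Your proposal is correct and follows essentially the same route as the paper: restrict rules 3, 4, 5, 9 to assumption-free derivations with rule 8 unrestricted, prove the Weak Deduction Theorem by handling only the new rule 8 case via rule 9, carry over the extension lemma and canonical model, and add the conjunction clause to the Truth Lemma. One small remark: your closing worry about rule 9 imposing an ``extra semantic condition'' is unnecessary, since rule 9 is already rule 5 of {\sf WF} itself and is sound on \emph{all} neighborhood frames; the omniscience of $g$ in the canonical model follows directly from Lemma~\ref{1kk} as in the $[\rightarrow]$-case, with no special role for rule 9.
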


\noindent The definitions of $\Gamma\vdash_{{\sf WF_{[\rightarrow, \wedge]}} }\!A$ and $\Gamma\Vdash_{{\sf WF_{[\rightarrow, \wedge]}} }\! A$ are straightforward (the new rule 8 has no restrictions).The $\mathsf{WF_{[ \rightarrow, \wedge ]}}$-theories are then defined in the obvious way, as is the empty theory.



\begin{thm}
\textbf{(Weak Deduction Theorem)} $A\vdash_{{\sf WF_{[\rightarrow, \wedge]}} } B  $ if and only if $ \vdash_{{\sf WF_{[\rightarrow, \wedge]}} } A\rightarrow B $.
\end{thm}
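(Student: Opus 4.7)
The plan is to follow the inductive strategy used in the proof of the Weak Deduction Theorem for $\mathsf{F_{[\rightarrow]}}$ (Thm.~\ref{lli}) and in the $\mathsf{F_{[\rightarrow,\wedge]}}$ version, adapting the rule-by-rule bookkeeping to the rule pattern of $\mathsf{WF_{[\rightarrow]}}$. The $\Leftarrow$ direction is immediate from weak $M\!P$. For the forward direction I would induct on the length of a derivation of $A\vdash_{\mathsf{WF_{[\rightarrow,\wedge]}}} B$.

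The base case splits in the standard way: if $B$ is one of the axioms 1, 6, or 7 of the system, then $\vdash B$, and rule 3 yields $\vdash A\rightarrow B$; if $B=A$, then axiom 1 gives $\vdash A\rightarrow A$ directly.

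For the inductive step I would classify by the last rule applied. For rules 3, 4, 5, and 9, each of which is restricted to assumption-free subderivations, the formula $B$ produced at this step is in fact a theorem, so a further application of rule 3 delivers $\vdash A\rightarrow B$. For rule 2 (weak $M\!P$) applied to $C$ and $C\rightarrow B$, the restriction on $M\!P$ ensures $\vdash C\rightarrow B$ outright; the induction hypothesis provides $\vdash A\rightarrow C$, and rule 4 of $\mathsf{WF_{[\rightarrow]}}$ applied to these two theorems yields $\vdash A\rightarrow B$. The genuinely new case is the unrestricted conjunction-introduction, rule 8: if $B=B_1\wedge B_2$ is obtained from $A\vdash B_1$ and $A\vdash B_2$, the induction hypothesis supplies $\vdash A\rightarrow B_1$ and $\vdash A\rightarrow B_2$, and an application of rule 9 then produces $\vdash A\rightarrow(B_1\wedge B_2)$.

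The only delicate point is this last case, and it is also the one that explains why rule 8 can afford to be unrestricted while rule 9 cannot. The use of rule 9 at that step is legitimate precisely because the induction hypothesis gives genuine theorems $\vdash A\rightarrow B_i$ rather than derivations still carrying the assumption $A$, so the restriction on rule 9 is automatically met. This is the rule-based counterpart of the way axiom 8 combined with $M\!P$ absorbed unrestricted conjunction-introduction in the $\mathsf{F_{[\rightarrow,\wedge]}}$ proof, and it is where the asymmetric restriction status of rules 8 and 9 does the real work.
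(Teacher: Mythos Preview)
Your proposal is correct and follows essentially the same approach as the paper. The paper's own proof only spells out the induction step for rule~8 (using rule~9 on the two theorems $\vdash A\rightarrow B_1$ and $\vdash A\rightarrow B_2$ obtained from the induction hypothesis), tacitly deferring the remaining cases to the earlier $\mathsf{WF_{[\rightarrow]}}$ argument, whereas you have written out all cases explicitly; the content is the same.
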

\begin{proof} We just need to consider the induction step for rule 8.  
If $ A\vdash B   $ and  $A\vdash C $. By induction hypothesis $ \vdash A\rightarrow B $ and $ \vdash A\rightarrow C $, so by rule 9, $ \vdash A\rightarrow B\wedge C  $.
\end{proof}

We have the same lemmas and definition of canonical model as in the previous section.





\begin{thm}
\label{c}
If $ \Sigma \nvdash_{{\sf WF_{[\rightarrow, \wedge]}} } D$ then there is a $[\rightarrow , \wedge]$-theory $ \Delta $ such that $\Delta \supseteq \Sigma, ~D \notin \Delta$.
\end{thm}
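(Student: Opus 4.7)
The plan is to follow the Lindenbaum-style construction exactly as in Lemma \ref{g} and Lemma \ref{g1}, accounting for the one extra closure condition that $[\rightarrow,\wedge]$-theories now carry. First I would enumerate all $[\rightarrow,\wedge]$-formulas with infinitely many repetitions as $B_0,B_1,\ldots$, set $\Pi_0=\Sigma\cup \mathsf{WF_{[\rightarrow,\wedge]}}$, and at each stage define
\[
\Pi_{n+1}=\begin{cases}\Pi_n\cup\{B_n\}&\text{if }\Pi_n,B_n\nvdash_{\mathsf{WF_{[\rightarrow,\wedge]}}}D,\\ \Pi_n&\text{otherwise.}\end{cases}
\]
Let $\Pi=\bigcup_{n}\Pi_n$. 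Since $\Sigma\nvdash D$ and enlarging the premise set by theorems cannot enable new derivations, $\Pi_0\nvdash D$; a straightforward induction on $n$ then gives $\Pi_n\nvdash D$ throughout, so $D\notin\Pi$ by construction, while $\Sigma\subseteq\Pi$ is immediate.

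What remains is to verify that $\Pi$ satisfies the three conditions of a $[\rightarrow,\wedge]$-theory: (i) it contains every theorem; (ii) if $\vdash A\to B$ and $A\in\Pi$, then $B\in\Pi$; and (iii) if $A,B\in\Pi$, then $A\wedge B\in\Pi$, this last clause corresponding to the unrestricted Rule~8. Clause (i) is immediate from $\mathsf{WF_{[\rightarrow,\wedge]}}\subseteq\Pi_0$. Clause (ii) is the familiar argument: if $\vdash A\to B$ and $A\in\Pi$ yet $B\notin\Pi$, pick a stage $n$ with $B_n=B$ and $A\in\Pi_n$; then $\Pi_n,B\vdash D$, while $\Pi_n\vdash B$ by weak $M\!P$ applied to $A\in\Pi_n$ and the theorem $A\to B$. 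Splicing the latter derivation into the former in place of the assumption $B$ yields $\Pi_n\vdash D$, contradicting the invariant.

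Clause (iii) is the one new to the conjunction fragment, and it is where I would focus the verification. Suppose $A,B\in\Pi$ but $A\wedge B\notin\Pi$; choose $n$ large enough that both $A,B\in\Pi_n$ and $B_n=A\wedge B$, which is possible thanks to the infinite repetitions in the enumeration. Then by construction $\Pi_n,A\wedge B\vdash D$, yet Rule~8 is unrestricted, so a single application of it from $A,B\in\Pi_n$ gives $\Pi_n\vdash A\wedge B$, and a splicing argument produces $\Pi_n\vdash D$, again contradicting the invariant. The main obstacle I expect is precisely this splicing step, because in $\mathsf{WF_{[\rightarrow,\wedge]}}$ Rules~3,~4,~5 and $M\!P$ are only available on assumption-free sub-derivations; however, since the newly supplied sub-derivation of $A\wedge B$ (resp.\ of $B$ in clause (ii)) is either obtained by the unrestricted Rule~8 or by weak $M\!P$ from a theorem, replacing the corresponding assumption lines by it preserves every restriction on sub-derivations, so the cut-like step goes through exactly as in the analogous Lemma~\ref{g1}.
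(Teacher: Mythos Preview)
Your proposal is correct and follows essentially the same Lindenbaum-style construction as the paper's Lemma~\ref{g} (and its $[\rightarrow,\wedge]$ analogue Lemma~\ref{g1}), which is exactly what the paper intends here since it gives no separate proof for this theorem. Your explicit handling of the infinite-repetition trick to choose $n$ large enough, and your careful justification of the splicing step under the restricted rules, are more detailed than the paper's own treatment but entirely in the same spirit.
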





\begin{dfn}The definition of \textbf{Canonical Model} $  \mathfrak{M}_{{\sf WF_{[\rightarrow, \wedge]}}} $ is  as that of $ \mathfrak{M}_{{\sf WF_{[\rightarrow]}}}$, the only difference being that instead of $ {[\rightarrow]} $-theory, we have ${[\rightarrow, \wedge]} $-theory.
\end{dfn}



\begin{thm}
\label{uuu}
\em\textbf{(Truth lemma)} For each $ \Gamma \in  W_{{\sf WF_{[\rightarrow, \wedge\wedge]}}}$ and for every formula $  E $, $$ \Gamma\Vdash   E~{\rm iff}~ E \in \Gamma .$$
\end{thm}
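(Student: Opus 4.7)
The plan is to proceed by induction on the complexity of $E$, paralleling the proof of Lemma \ref{uuu} for $\mathsf{WF_{[\rightarrow]}}$ and inserting a new case for conjunction. The base case $E = p \in P$ is immediate from clause 4 of the canonical-model definition: $\Gamma \Vdash p$ iff $\Gamma \in V(p) = ||p||$ iff $p \in \Gamma$.

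For the conjunction case $E = A \wedge B$, I would use the standard pointwise truth clause for $\wedge$ in a $[\rightarrow,\wedge]$-neighborhood model, so that $\Gamma \Vdash A \wedge B$ iff $\Gamma \Vdash A$ and $\Gamma \Vdash B$. The induction hypothesis turns this into $A \in \Gamma$ and $B \in \Gamma$. The forward direction of what remains uses the axioms $\vdash A \wedge B \rightarrow A$ and $\vdash A \wedge B \rightarrow B$ together with closure of $[\rightarrow,\wedge]$-theories under provable implications. The converse direction uses that rule 8 applies without restrictions, which is exactly the extra closure condition built into the definition of a $[\rightarrow,\wedge]$-theory; hence $A, B \in \Gamma$ forces $A \wedge B \in \Gamma$.

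For the implication case $E = A \rightarrow B$, I would mirror the $\mathsf{WF_{[\rightarrow]}}$ argument essentially verbatim. First I need the $[\rightarrow,\wedge]$-analogues of Lemmas \ref{1kk} and \ref{xxxx}: namely $||C|| \subseteq ||D|| \Longleftrightarrow\, \vdash_{\mathsf{WF_{[\rightarrow,\wedge]}}} C \rightarrow D$, and $(||A||, ||B||) \in NB_{\mathsf{WF_{[\rightarrow,\wedge]}}}(\Gamma) \Longleftrightarrow A \rightarrow B \in \Gamma$. Their proofs go through without change: the first uses the Weak Deduction Theorem for $\mathsf{WF_{[\rightarrow,\wedge]}}$ together with Theorem \ref{c}, while the second uses the first in combination with clauses 2 and 3 of the definition of the canonical model. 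Once these are in hand, the truth clause for $\rightarrow$ combined with the inductive identifications $A^{\mathfrak{M}} = ||A||$ and $B^{\mathfrak{M}} = ||B||$ closes the case, exactly as in Lemma \ref{uuu}.

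The main point that needs care, rather than the conjunction case (which is routine), is the implication-case analogue of Lemma \ref{xxxx}: from $(||A||, ||B||) \in NB(\Gamma)$ one must conclude $A \rightarrow B \in \Gamma$. If the pair entered $NB(\Gamma)$ via clause 3, then $||A|| \subseteq ||B||$, and the analogue of Lemma \ref{1kk} gives $\vdash A \rightarrow B$, so closure under provable implication yields $A \rightarrow B \in \Gamma$. If instead it entered via clause 2 through some $C \rightarrow D \in \Gamma$ with $||C|| = ||A||$ and $||D|| = ||B||$, then the analogue of Lemma \ref{1kk} supplies $\vdash A \leftrightarrow C$ and $\vdash B \leftrightarrow D$, and rule 5 of Theorem \ref{wf} delivers $\vdash (C \rightarrow D) \rightarrow (A \rightarrow B)$, so that $A \rightarrow B \in \Gamma$ again by closure. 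This is precisely the role played by the equivalence rule in the axiomatization, and once this point is recorded the remainder of the proof is purely routine.
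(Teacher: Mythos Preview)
Your proposal is correct and follows essentially the same approach as the paper: induction on $E$, with the conjunction case handled via the pointwise truth clause, closure of $[\rightarrow,\wedge]$-theories under rule~8, and the axioms $A\wedge B\rightarrow A$, $A\wedge B\rightarrow B$, while the implication case is carried over unchanged from the $\mathsf{WF_{[\rightarrow]}}$ Truth Lemma via the analogues of Lemmas~\ref{1kk} and~\ref{xxxx}. The paper's own proof is terser (it only writes out one direction of the conjunction case and leaves the rest implicit), but your more detailed version matches its strategy exactly.
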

\begin{proof} By induction on $E$. We just treat conjunction.

Let $ \Gamma \in  W_{{\sf WF_{  [\rightarrow , \wedge]}}}$ and $  \Gamma\Vdash A\wedge B  $ then $  \Gamma\Vdash A  $ and $  \Gamma\Vdash B  $. By the induction hypothesis $ A \in \Gamma $ and $ B \in \Gamma $. $ \Gamma $ is a $[\rightarrow , \wedge]$-theory so $ A\wedge B \in \Gamma $.




  \end{proof}

\begin{thm}
\em\textbf{(Completeness and Soundness Theorem)} $ \Sigma\vdash_{{\sf WF_{[\rightarrow , \wedge]}} } A $ if and only if $ \Sigma\Vdash_{{\sf WF_{[\rightarrow , \wedge]}} } A $.
\end{thm}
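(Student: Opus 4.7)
The plan is to follow the template established in the parallel completeness proofs for $\mathsf{F_{[\rightarrow,\wedge]}}$ and $\mathsf{WF_{[\rightarrow]}}$, since every ingredient---the Weak Deduction Theorem, the Lindenbaum-style extension in Theorem \ref{c}, the canonical model $\mathfrak{M}_{\mathsf{WF_{[\rightarrow,\wedge]}}}$, and the Truth Lemma just proved---is already in place.

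For soundness (left to right) I would argue by induction on the length of the derivation that each axiom is valid in every $[\rightarrow,\wedge]$-neighborhood model of $\mathsf{WF}$ and that each rule preserves valid consequence. The implicational axioms and rules carry over directly from the soundness direction of the $\mathsf{WF_{[\rightarrow]}}$-theorem. Axioms $6$ and $7$ and rule $8$ are immediate from the semantic clause for $\wedge$. Rule $9$ uses the closure of $NB(\Gamma)$ under $\subseteq$: from $(||A||,||B||)$ and $(||A||,||C||)$ in $NB(\Gamma)$ one obtains $(||A||,||B||\cap||C||)=(||A||,||B\wedge C||)\in NB(\Gamma)$.

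For completeness (right to left), given $\Sigma \nvdash_{\mathsf{WF_{[\rightarrow,\wedge]}}} A$, I would invoke Theorem \ref{c} to extract a $[\rightarrow,\wedge]$-theory $\Pi \supseteq \Sigma$ with $A \notin \Pi$. Placing $\Pi$ in $\mathfrak{M}_{\mathsf{WF_{[\rightarrow,\wedge]}}}$ and applying the Truth Lemma gives $\Pi \Vdash B$ for every $B \in \Sigma$ while $\Pi \nVdash A$, whence $\Sigma \nVdash_{\mathsf{WF_{[\rightarrow,\wedge]}}} A$.

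The main obstacle, and essentially the only step that is not a verbatim transcription of the purely implicational case, is checking that the canonical structure really is a $[\rightarrow,\wedge]$-neighborhood model. This reduces to establishing the $[\rightarrow,\wedge]$-analogue of Lemma \ref{1kk} and to verifying the identity $||B\wedge C||=||B||\cap||C||$ at the level of $[\rightarrow,\wedge]$-theories. The inclusion $||B\wedge C|| \subseteq ||B||\cap||C||$ uses axioms $6$ and $7$ together with closure of theories under provable implication, while the reverse inclusion uses the unrestricted rule $8$, which is built into the closure of $[\rightarrow,\wedge]$-theories. Once this identification is in hand, the implication step of the Truth Lemma is just the argument of Lemma \ref{xxxx} applied to the enlarged language.
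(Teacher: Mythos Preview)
Your completeness direction is correct and follows exactly the paper's template: invoke the Lindenbaum-style extension (Theorem~\ref{c}) and the Truth Lemma to refute $\Sigma \Vdash_{\mathsf{WF_{[\rightarrow,\wedge]}}} A$. Your remarks about $||B\wedge C|| = ||B|| \cap ||C||$ and the $[\rightarrow,\wedge]$-analogue of Lemma~\ref{1kk} are also right and supply detail the paper leaves implicit.

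The soundness sketch for rule~$9$, however, has a gap. You claim that from $(||A||,||B||),\,(||A||,||C||) \in N\!B(\Gamma)$ one obtains $(||A||,\,||B||\cap||C||) \in N\!B(\Gamma)$; but general $\mathsf{WF}$ neighborhood frames are \emph{not} closed under intersection in the second coordinate---that closure is precisely the condition corresponding to axiom $C$, which $\mathsf{WF_{[\rightarrow,\wedge]}}$ does not contain. (You also conflate arbitrary models with the canonical one by writing $||A||$ and $N\!B(\Gamma)$ in a soundness argument.) The correct argument uses the restriction on rule~$9$: it applies only when there are no assumptions, so its premises $A\rightarrow B$ and $A\rightarrow C$ are theorems and hence valid in every model. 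Evaluating them at the omniscient world $g$ yields $A^{\mathfrak{M}} \subseteq B^{\mathfrak{M}}$ and $A^{\mathfrak{M}} \subseteq C^{\mathfrak{M}}$, so $A^{\mathfrak{M}} \subseteq B^{\mathfrak{M}}\cap C^{\mathfrak{M}} = (B\wedge C)^{\mathfrak{M}}$; then the frame condition $X\subseteq Y \Rightarrow (X,Y)\in N\!B(w)$ gives $(A^{\mathfrak{M}},(B\wedge C)^{\mathfrak{M}}) \in N\!B(w)$ for every $w$, i.e.\ $A\rightarrow B\wedge C$ is a theorem.
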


\begin{prop}
A formula $ A \in \mathcal{L}_{[\rightarrow, \wedge]} $ is provable in $ {\sf WF_{[\rightarrow, \wedge]}}$ iff it is provable in ${\sf WF}  $.
 \end{prop}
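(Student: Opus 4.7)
The plan is to mirror the blueprint already established by Proposition \ref{F12} and its $\mathsf{WF_{[\rightarrow]}}$-analogue, since this is explicitly the pattern the authors have flagged as ``quite general''. The left-to-right direction is essentially bookkeeping: each of axioms 1, 6, 7 and each of rules 2--5, 8, 9 listed for $\mathsf{WF_{[\rightarrow,\wedge]}}$ is either literally an axiom/rule of $\mathsf{WF}$ or an easy consequence of its axioms and rules (the congruence rule 5 and the conjunction rules 8, 9 in particular come directly from the standard axiomatisation of $\mathsf{WF}$). Hence any derivation in $\mathsf{WF_{[\rightarrow,\wedge]}}$ transcribes verbatim into a derivation in $\mathsf{WF}$.

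For the right-to-left direction I would argue by contraposition. Suppose $A\in\mathcal{L}_{[\rightarrow,\wedge]}$ is not provable in $\mathsf{WF_{[\rightarrow,\wedge]}}$. By the Completeness and Soundness Theorem just established, together with the Truth Lemma, there is a $[\rightarrow,\wedge]$-theory $\Gamma\in W_{\mathsf{WF_{[\rightarrow,\wedge]}}}$ with $A\notin\Gamma$ and hence $\mathfrak{M}_{\mathsf{WF_{[\rightarrow,\wedge]}}},\Gamma\nVdash A$ in the canonical model of the fragment.

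The substantive step is to observe that this canonical model is (or can be extended to) a genuine $\mathsf{WF}$-neighborhood model in the sense of Appendix \ref{neighbor}. The two clauses in the definition of the canonical $N\!B$ guarantee the superset closure and the presence of the pairs $(\|A\|,\|B\|)$ whenever $A\rightarrow B\in\Gamma$; the distinguished world $g=\mathsf{WF_{[\rightarrow,\wedge]}}$ is omniscient exactly as in the $[\rightarrow]$-case; and the conjunction clause in the Truth Lemma, together with axioms 6, 7 and rule 9, delivers the usual meet behaviour of $\wedge$. To interpret the missing connectives $\vee$ and $\bot$ we extend the valuation in the obvious trivial way (for instance $V(\bot)=\emptyset$ and reading $\vee$ as set-theoretic union on sentence-denotations), which does not disturb the truth value of any formula of $\mathcal{L}_{[\rightarrow,\wedge]}$.

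Once $\mathfrak{M}_{\mathsf{WF_{[\rightarrow,\wedge]}}}$ is certified as a $\mathsf{WF}$-model, it witnesses $\nVdash_{\mathsf{WF}} A$, and by soundness of $\mathsf{WF}$ we conclude $\nvdash_{\mathsf{WF}} A$. The main obstacle, though essentially notational, is the verification that the full list of $\mathsf{WF}$ frame conditions really is satisfied by the fragment's canonical neighborhood structure after the trivial extension; this is the analogue of the passing remark in the proof of Proposition \ref{F12} and requires one to inspect each condition from Appendix \ref{neighbor} and note that it mentions only features already enforced by the $[\rightarrow,\wedge]$-theory definition, the canonical clause for $N\!B$, and the chosen valuation for $\bot$ and $\vee$.
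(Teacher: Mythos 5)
Your proposal is correct and follows essentially the same route the paper intends: the left-to-right direction is a direct transcription of the fragment's axioms and rules into $\mathsf{WF}$, and the right-to-left direction observes that the canonical model of $\mathsf{WF_{[\rightarrow,\wedge]}}$ is already a neighborhood model for full $\mathsf{WF}$ (its canonical clause giving superset closure and omniscience of $g$), so a non-theorem of the fragment is refuted in a $\mathsf{WF}$-model and hence unprovable in $\mathsf{WF}$ by soundness. The only small quibble is that no ``extension of the valuation'' for $\vee$ and $\perp$ is needed: their semantics is fixed by the truth definition on any neighborhood model and does not affect formulas of $\mathcal{L}_{[\rightarrow,\wedge]}$.
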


\section{Implicational Fragments of extensions of {\sf WF}}\label{extwf}
In the Subsections \ref{WFI} to \ref{WFCD}, we characterize the implicational fragments of ${\sf WFI}$, ${\sf WF}\widehat{{\sf C}}$, ${\sf WF}\widehat{{\sf D}}$, and ${\sf WF}\widehat{{\sf C}} \widehat{{\sf D}}$. These were the extensions for which it is easily possible to determine their fragments. 
We conclude in Subsection \ref{unresolve} with a discussion of the fragments for which we have not yet been successful and make some conjectures.

\subsection{The implicational fragment of ${\sf WFI } $}\label{WFI}
In this subsection we will characterize the implicational fragment of {\sf WFI }. The procedure is in this case somewhat more complicated to make sure that the canonical model will be transitive.

The logic {\sf WFI } obtained by adding the axiom $(A\rightarrow B)\wedge (B\rightarrow C)\rightarrow (A\rightarrow C) $ to {\sf WF} is known to be complete w.r.t.\ generalized transitive neighborhood frames. We will add the following rule $ I_{Tran} $ to $\mathsf{WF_{[\rightarrow]}}$  to obtain an axiomatization of ${\mathsf{WFI_{[\rightarrow]}}}$.
$$ I_{Tran}:   \frac{A\rightarrow (B\rightarrow C)~~~~A\rightarrow (C\rightarrow D)}{A\rightarrow (B\rightarrow D)}$$ 
In the definition of $ \vdash_{{\sf WFI_{[\rightarrow]}} }\! A $, $ I_{Tran} $ can be applied when there are no assumptions. But note that the following rule can be applied without restrictions, and to define ${\sf WFI_{[\rightarrow]}}$-theories the corresponding clause is added:
$$ \dfrac{A\rightarrow B~~~~B\rightarrow C}{A\rightarrow C} $$
\begin{thm}
\textbf{(Weak Deduction Theorem)} $A\vdash_{{\sf WFI_{[\rightarrow]}} } B  $ if and only if $ \vdash_{{\sf WFI_{[\rightarrow]}} } A\rightarrow B $.
\end{thm}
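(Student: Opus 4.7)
The plan is to adapt the proof pattern of Theorem \ref{lli} and its analogues in the earlier sections, proceeding by induction on the length of the derivation witnessing $A\vdash_{\sf WFI_{[\rightarrow]}} B$. The direction $\Leftarrow$ is immediate by weak modus ponens, since $\vdash A\rightarrow B$ is by definition an assumption-free derivation and so Rule~2 (with $A$ as the assumption) gives $A\vdash B$. So all the work lies in the direction $\Rightarrow$.

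The base cases are handled exactly as before: if $B=A$, use Axiom~1; if $B$ is derivable without assumptions, then $B$ is a theorem and Rule~3 yields $\vdash A\rightarrow B$. The same observation disposes of every inductive step in which the last rule applied is one of those restricted to assumption-free derivations (Rules~3 and~5, $I_{Tran}$, and the restricted form of MP): in each of these cases the conclusion $B$ is itself a theorem, so Rule~3 directly supplies $\vdash A\rightarrow B$.

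The genuinely new induction step, and the main obstacle, is the \emph{unrestricted} transitivity rule $\dfrac{D\rightarrow E~~E\rightarrow F}{D\rightarrow F}$, which in $\mathsf{WFI_{[\rightarrow]}}$ is allowed even in the presence of assumptions. Suppose $A\vdash D\rightarrow E$ and $A\vdash E\rightarrow F$ are the premises used to derive $A\vdash D\rightarrow F$. By the induction hypothesis, $\vdash A\rightarrow(D\rightarrow E)$ and $\vdash A\rightarrow(E\rightarrow F)$. Now the restricted rule $I_{Tran}$, applied to these two theorems (legitimately, since the derivations are assumption-free), delivers $\vdash A\rightarrow(D\rightarrow F)$, which is exactly what is needed.

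The design is tailored precisely for this: $I_{Tran}$ is introduced so that the unrestricted chaining rule can be internalized as an implication under the hypothesis $A$. Without $I_{Tran}$ the theorem would fail, because there would be no way to transform the two theorems obtained from the induction hypothesis into $\vdash A\rightarrow(D\rightarrow F)$ within $\mathsf{WF_{[\rightarrow]}}$. Any remaining cases (notably weak MP applied with an assumption on the minor premise) go through exactly as in Theorem~\ref{lli}, so no further work is needed.
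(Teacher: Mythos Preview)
Your proof is correct and follows essentially the same approach as the paper: reduce to the Weak Deduction Theorem for $\mathsf{WF_{[\rightarrow]}}$ and handle the one genuinely new induction step---the unrestricted transitivity rule---by applying $I_{Tran}$ to the two theorems $\vdash A\rightarrow(D\rightarrow E)$ and $\vdash A\rightarrow(E\rightarrow F)$ obtained from the induction hypothesis. The paper's proof is terser (it simply cites Theorem~\ref{b} and writes out only this new step), but the content is identical.
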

\begin{proof} By Theorem \ref{b}, we just need to consider the following step. 
If $ A\vdash B\rightarrow C   $ and  $A\vdash C\rightarrow D $. By induction hypothesis $ \vdash A\rightarrow (B\rightarrow C) $ and $ \vdash A\rightarrow (C\rightarrow D) $, so by $ I_{Tran} $ , $ \vdash A\rightarrow (B\rightarrow D)$.
\end{proof}

As in the previous section, we conclude to

\begin{prop}
$ \Delta $ is a ${\sf WFI_{[\rightarrow]}}$-theory $ \Longleftrightarrow $ $ \Delta\vdash_{{\sf WFI_{[\rightarrow]}} } A $ if and only if $ A \in \Delta $.
\end{prop}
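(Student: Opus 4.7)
The plan is to mimic the proof of the analogous proposition for $\mathsf{F_{[\rightarrow]}}$ (namely the one right after Definition \ref{r1}), adapting it to reflect the fact that the defining clauses of a $\mathsf{WFI_{[\rightarrow]}}$-theory now include, besides the usual closure under theorems and under provable implication, the extra clause coming from the unrestricted rule $\dfrac{A\rightarrow B\ \ B\rightarrow C}{A\rightarrow C}$ added just above.

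For the direction ($\Leftarrow$), I would assume the biconditional $\Delta\vdash_{\mathsf{WFI_{[\rightarrow]}}} A \Longleftrightarrow A\in \Delta$ and verify each defining clause of a theory in turn. The closure under theorems is immediate since $\vdash A$ gives $\Delta\vdash A$; closure under provable implication follows from weak $M\!P$ applied to a theorem $\vdash A\rightarrow B$ and $A\in\Delta$; closure under the unrestricted transitivity rule follows by using this rule (which does not need the ``no assumptions'' restriction) on the derivations witnessing $\Delta\vdash A\rightarrow B$ and $\Delta\vdash B\rightarrow C$, yielding $\Delta\vdash A\rightarrow C$ and hence $A\rightarrow C\in\Delta$.

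For the direction ($\Rightarrow$), I would assume $\Delta$ is a $\mathsf{WFI_{[\rightarrow]}}$-theory and prove by induction on the length of a derivation witnessing $\Delta\vdash A$ that $A\in\Delta$; the converse is trivial from $A\vdash A$. The cases split cleanly according to how the derivation ends. If $A$ is an axiom or an assumption from $\Delta$, we use the clause that $\mathsf{WFI_{[\rightarrow]}}$ is contained in $\Delta$, resp.\ the assumption $A\in \Delta$ directly. If the last step is Rule 3, Rule 4, Rule 5 of Theorem \ref{wf} or $I_{Tran}$, each is applied only when there are no assumptions, so $A$ is a theorem of $\mathsf{WFI_{[\rightarrow]}}$ and lies in $\Delta$ by closure under theorems. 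If the last step is weak $M\!P$ from a theorem $\vdash A'\rightarrow A$ and $\Delta\vdash A'$, the induction hypothesis gives $A'\in\Delta$ and closure under provable implication yields $A\in\Delta$. Finally, if the last step is the unrestricted rule $\dfrac{A'\rightarrow B\ \ B\rightarrow C}{A'\rightarrow C}$ with $A = A'\rightarrow C$, the induction hypothesis gives $A'\rightarrow B, B\rightarrow C\in\Delta$ and the corresponding new closure clause in the definition of $\mathsf{WFI_{[\rightarrow]}}$-theory yields $A'\rightarrow C\in\Delta$.

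There is no real obstacle here; the only point to watch is that the induction is set up exactly to match the side-conditions ``no assumptions'' attached to each rule in Definition \ref{wf1} (suitably extended for $I_{Tran}$), so that each restricted rule produces only theorems at the inductive step, while each unrestricted rule corresponds to one of the closure clauses in the definition of $\mathsf{WFI_{[\rightarrow]}}$-theory. Since this bookkeeping is identical to that for $\mathsf{F_{[\rightarrow]}}$, the proof is routine and, as the paper says for the analogous earlier propositions, need not be spelled out in detail.
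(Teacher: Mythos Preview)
Your proposal is correct and follows exactly the approach the paper intends: the paper gives no explicit proof here, writing only ``As in the previous section, we conclude to\ldots'', which in turn points back to the $\mathsf{F_{[\rightarrow]}}$ case where the proof is ``From right to left is trivial. The other direction is by an easy induction on the length of the proof.'' Your write-up is a faithful unpacking of that sketch, correctly incorporating the one new ingredient for $\mathsf{WFI_{[\rightarrow]}}$, namely that the simple transitivity rule is now unrestricted and has a matching closure clause in the definition of theory. One small redundancy: you list Rule~4 among the rules ``applied only when there are no assumptions'' and then also treat it separately as the unrestricted rule; in $\mathsf{WFI_{[\rightarrow]}}$ only the latter case is needed, so Rule~4 should be dropped from the restricted list---but this does not affect correctness.
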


\begin{thm}
\label{2ci}
If $ \Sigma \nvdash_{{\sf WFI_{[\rightarrow]}} } D$ then there is a ${\sf WFI_{[\rightarrow]}}$-theory $ \Delta $ such that $\Delta \supseteq \Sigma, ~D \notin \Delta$.
\end{thm}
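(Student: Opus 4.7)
The plan is to mimic the Lindenbaum-style construction of Lemma \ref{g}, adapting it to handle the one additional closure condition in the definition of a ${\sf WFI_{[\rightarrow]}}$-theory, namely closure under the unrestricted rule $\frac{A\rightarrow B,\ B\rightarrow C}{A\rightarrow C}$.

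First I would enumerate all $[\rightarrow]$-formulas with infinitely many repetitions as $B_0, B_1, \dots$ and set $\Pi_0 = \Sigma \cup {\sf WFI_{[\rightarrow]}}$ (where the second summand means the set of theorems). Then define $\Pi_{n+1} = \Pi_n \cup \{B_n\}$ if $\Pi_n, B_n \nvdash_{{\sf WFI_{[\rightarrow]}}} D$, and $\Pi_{n+1} = \Pi_n$ otherwise. Take $\Delta = \bigcup_n \Pi_n$. By an immediate induction each $\Pi_n$ satisfies $\Pi_n \nvdash D$, which together with the finite character of provability gives $\Delta \nvdash D$, in particular $D \notin \Delta$. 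Clearly $\Sigma \subseteq \Delta$ and every theorem of ${\sf WFI_{[\rightarrow]}}$ lies in $\Delta$ by the construction of $\Pi_0$.

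The remaining work is to verify that $\Delta$ is a ${\sf WFI_{[\rightarrow]}}$-theory. The clause ``$\vdash A\rightarrow B$ and $A \in \Delta$ imply $B \in \Delta$'' is routine and runs exactly as in Lemma \ref{g}: if $B = B_n \notin \Delta$, then $\Pi_n, B \vdash D$, hence $\Pi_n, A \vdash D$ (using $\vdash A\rightarrow B$ and the unrestricted form of $MP$ applicable when $A\rightarrow B$ is a theorem), contradicting $A \in \Delta$. For the new clause — closure under $\frac{A\rightarrow B,\ B\rightarrow C}{A\rightarrow C}$ — suppose $A\rightarrow B, B\rightarrow C \in \Delta$ but $A\rightarrow C = B_n \notin \Delta$. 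Then $\Pi_n, A\rightarrow C \vdash D$. Since the rule $\frac{A\rightarrow B,\ B\rightarrow C}{A\rightarrow C}$ is available without restrictions in the definition of $\vdash_{\sf WFI_{[\rightarrow]}}$, we may replace $A\rightarrow C$ by the two assumptions $A\rightarrow B$ and $B\rightarrow C$, obtaining $\Pi_n, A\rightarrow B, B\rightarrow C \vdash D$; as both of these lie in $\Delta$, a standard compactness argument yields $\Delta \vdash D$, contradicting $D \notin \Delta$.

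The main point, and the only real novelty compared to Lemma \ref{g}, is precisely this last verification: it is exactly the unrestricted availability of the transitivity rule (as opposed to the restricted $I_{Tran}$, which is used only in proving the weak deduction theorem) that allows the Lindenbaum closure argument to go through. Everything else is a word-for-word reprise of the ${\sf F}_{[\rightarrow]}$-case.
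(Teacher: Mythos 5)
Your proposal is correct and is essentially the proof the paper intends: the paper states Theorem \ref{2ci} without proof, relying on the ``quite general'' Lindenbaum construction of Lemma \ref{g}, and your argument reproduces exactly that construction, with the only new work being closure under the unrestricted rule $\frac{A\rightarrow B,\ B\rightarrow C}{A\rightarrow C}$, handled just as the paper handles rule 4 there. (Only a cosmetic slip: the final contradiction is with $\Delta\nvdash_{{\sf WFI_{[\rightarrow]}}}D$, which you had already established, rather than with $D\notin\Delta$ itself.)
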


\begin{lem}
\label{2kk}
Let C and D are formulas. Then
$$ || C|| \subseteq  || D|| ~~{\rm iff}~~\vdash_{{\sf WFI_{[\rightarrow]}} } C\rightarrow D .$$
\end{lem}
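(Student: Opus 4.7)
The plan is to mimic verbatim the proof of Lemma \ref{1kk}, replacing every ingredient with its ${\sf WFI_{[\rightarrow]}}$-counterpart: use the Weak Deduction Theorem for ${\sf WFI_{[\rightarrow]}}$ in place of the one for ${\sf WF_{[\rightarrow]}}$, and use Theorem \ref{2ci} in place of Theorem \ref{1c}. The key point is that the argument of Lemma \ref{1kk} is purely formal and only invokes (a) the Weak Deduction Theorem, (b) a Lindenbaum-style extension lemma, and (c) the clause in the definition of theory which says that provable implications transport membership. All three of these are already available in the ${\sf WFI_{[\rightarrow]}}$-setting, so no new ideas are required.

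In detail, I would argue as follows. For the right-to-left direction, assume $\vdash_{{\sf WFI_{[\rightarrow]}} } C\rightarrow D$ and let $\Gamma\in W_{{\sf WFI_{[\rightarrow]}}}$ with $C\in\Gamma$. By the defining clause of a ${\sf WFI_{[\rightarrow]}}$-theory (the one inherited from a ${\sf WF_{[\rightarrow]}}$-theory stating that $\vdash A\to B$ together with $A\in\Delta$ forces $B\in\Delta$), we get $D\in\Gamma$, whence $\Gamma\in\|D\|$. Thus $\|C\|\subseteq\|D\|$.

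For the left-to-right direction, I would argue by contrapositive. Suppose $\nvdash_{{\sf WFI_{[\rightarrow]}} } C\rightarrow D$. By the Weak Deduction Theorem for ${\sf WFI_{[\rightarrow]}}$ (just established), $C\nvdash_{{\sf WFI_{[\rightarrow]}} } D$. Taking $\Sigma=\{C\}$, Theorem \ref{2ci} supplies a ${\sf WFI_{[\rightarrow]}}$-theory $\Gamma\supseteq\Sigma$ with $D\notin\Gamma$. Then $\Gamma\in\|C\|\setminus\|D\|$, so $\|C\|\nsubseteq\|D\|$.

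I do not anticipate any real obstacle: the only thing one might double-check is that the new rule $I_{Tran}$ does not disturb the argument, but this is guaranteed by the fact that $I_{Tran}$ has already been absorbed into the notion of ${\sf WFI_{[\rightarrow]}}$-theory (via the extra closure clause) and is respected by the Lindenbaum construction of Theorem \ref{2ci}. Consequently the proof is essentially a one-line appeal to Theorem \ref{2ci} plus the theory closure axiom, exactly as in Lemma \ref{1kk}.
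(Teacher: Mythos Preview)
Your proposal is correct and follows exactly the (implicit) approach of the paper, which simply omits the proof of Lemma~\ref{2kk} because it is a verbatim repetition of the proof of Lemma~\ref{1kk} with ${\sf WFI_{[\rightarrow]}}$ in place of ${\sf WF_{[\rightarrow]}}$. One tiny inaccuracy in your commentary: the extra closure clause added to ${\sf WFI_{[\rightarrow]}}$-theories is for the unrestricted transitivity rule $\frac{A\to B\quad B\to C}{A\to C}$, not for $I_{Tran}$ itself (which is applied only without assumptions), but this has no bearing on the argument, which uses only the clause that provable implications transport membership.
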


The canonical model for ${\sf WFI_{[\rightarrow]}}$ is defined exactly in the same way as the canonical model for ${\sf WF_{[\rightarrow]}}$, and it can easily be shown that this canonical model is transitive.  


\begin{lem}
\label{xxxx22}
$( ||  A||  , ||  B ||  ) {\in} N\!B_{{\sf WFI_{[\rightarrow]}}} (\Gamma )$ iff $A\rightarrow B \in \Gamma  $.
\end{lem}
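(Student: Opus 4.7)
The plan is to mirror the proof of Lemma \ref{xxxx} from the \textsf{WF}-case, substituting Lemma \ref{2kk} for Lemma \ref{1kk}. The canonical model $\mathfrak{M}^{\mathsf{WFI_{[\rightarrow]}}}$ is defined exactly as $\mathfrak{M}^{\mathsf{WF_{[\rightarrow]}}}$, so the neighborhood $N\!B_{\mathsf{WFI_{[\rightarrow]}}}(\Gamma)$ is built up from precisely two kinds of pairs: those inserted via clause (2) of the canonical model definition (i.e., pairs $(||A||,||B||)$ arising from $A\rightarrow B\in\Gamma$) and those inserted via clause (3) (i.e., pairs $(X,Y)$ with $X\subseteq Y$).

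First I would dispatch the right-to-left direction, which is immediate: if $A\rightarrow B\in\Gamma$, then by clause (2) the pair $(||A||,||B||)$ belongs to $N\!B_{\mathsf{WFI_{[\rightarrow]}}}(\Gamma)$ directly.

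For the left-to-right direction, suppose $(||A||,||B||)\in N\!B_{\mathsf{WFI_{[\rightarrow]}}}(\Gamma)$. If this pair entered the neighborhood via clause (2), then by definition $A\rightarrow B\in\Gamma$ and we are done. Otherwise it entered via clause (3), so $||A||\subseteq ||B||$; then Lemma \ref{2kk} yields $\vdash_{\mathsf{WFI_{[\rightarrow]}}} A\rightarrow B$, and since $\Gamma$ is a $\mathsf{WFI_{[\rightarrow]}}$-theory it contains every theorem, whence $A\rightarrow B\in\Gamma$.

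The only subtlety, as in Lemma \ref{xxxx}, is the tacit assumption that $N\!B_{\mathsf{WFI_{[\rightarrow]}}}(\Gamma)$ is generated \emph{only} by the two clauses of the canonical-model definition—no extraneous pairs slip in. Once this minimality reading of the definition is accepted, the argument is routine; I do not expect the added transitivity rule $I_{Tran}$ to cause difficulty here, since it is used to ensure generalized transitivity of the resulting neighborhood frame (a separate verification) rather than to enlarge $N\!B(\Gamma)$ with new pairs beyond those listed.
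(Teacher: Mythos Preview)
Your proposal is correct and follows the same route as the paper, which leaves this lemma unproved precisely because it is a verbatim repetition of the argument for Lemma~\ref{xxxx} with Lemma~\ref{2kk} in place of Lemma~\ref{1kk}. One small point worth tightening: when you say ``if this pair entered via clause~(2), then by definition $A\rightarrow B\in\Gamma$,'' strictly speaking clause~(2) only guarantees the existence of formulas $C,D$ with $C\rightarrow D\in\Gamma$ and $(\|C\|,\|D\|)=(\|A\|,\|B\|)$; you then need Lemma~\ref{2kk} plus rule~5 to pass from $C\rightarrow D\in\Gamma$ to $A\rightarrow B\in\Gamma$ --- but the paper's own proof of Lemma~\ref{xxxx} glosses over exactly the same step.
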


\begin{thm}
\textbf{(Truth Lemma)} 
For each $ \Gamma \in  W_{{\sf WFI_{[\rightarrow, \wedge]}}}$ and for every formula $  E $, $$ \Gamma\Vdash   E~{\rm iff}~ E \in \Gamma .$$
\end{thm}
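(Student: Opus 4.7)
The proof is by induction on the complexity of $E$. The base case $E := p$ atomic is immediate from the canonical valuation $V(p) = \|p\|$. There are two inductive cases to handle: conjunction and implication.

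For the conjunction case $E := A \wedge B$, the forward direction unfolds $\Gamma \Vdash A \wedge B$ via the semantic clause to $\Gamma \Vdash A$ and $\Gamma \Vdash B$; the induction hypothesis then yields $A, B \in \Gamma$, and since the $[\rightarrow, \wedge]$-theory $\Gamma$ is closed under the unrestricted rule 8 (the analog of Rule 7 from the $\mathsf{F_{[\rightarrow, \wedge]}}$ section), we obtain $A \wedge B \in \Gamma$. Conversely, from $A \wedge B \in \Gamma$ together with axioms 6 and 7 and closure of theories under provable implication, we get $A, B \in \Gamma$; the induction hypothesis then delivers $\Gamma \Vdash A$ and $\Gamma \Vdash B$, hence $\Gamma \Vdash A \wedge B$.

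For the implication case $E := A \rightarrow B$, I chain equivalences in parallel with the proof of Lemma \ref{uuu} for $\mathsf{WF_{[\rightarrow]}}$:
\[
\Gamma \Vdash A \rightarrow B \iff (A^{\mathfrak{M}}, B^{\mathfrak{M}}) \in N\!B_{\sf WFI_{[\rightarrow, \wedge]}}(\Gamma) \iff (\|A\|, \|B\|) \in N\!B_{\sf WFI_{[\rightarrow, \wedge]}}(\Gamma) \iff A \rightarrow B \in \Gamma,
\]
where the middle step uses the induction hypothesis to replace the truth-sets $A^{\mathfrak{M}}$, $B^{\mathfrak{M}}$ by the proof-sets $\|A\|$, $\|B\|$, and the last step is the $[\rightarrow, \wedge]$-analog of Lemma \ref{xxxx22} for the canonical model of $\mathsf{WFI_{[\rightarrow, \wedge]}}$.

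The main obstacle is securing the last equivalence in this chain. It reduces to establishing the $[\rightarrow, \wedge]$-version of Lemma \ref{2kk}, namely that $\|C\| \subseteq \|D\|$ iff $\vdash_{\sf WFI_{[\rightarrow, \wedge]}} C \rightarrow D$. The nontrivial right-to-left direction applies the Weak Deduction Theorem for $\mathsf{WFI_{[\rightarrow, \wedge]}}$ to pass from $\nvdash C \rightarrow D$ to $C \nvdash D$, and then invokes the $[\rightarrow, \wedge]$-analog of Theorem \ref{2ci} to furnish a $[\rightarrow, \wedge]$-theory containing $C$ but excluding $D$. All ingredients carry over from the $\mathsf{WF_{[\rightarrow, \wedge]}}$ development with no structural change; the one point to check is that the additional rule $I_{Tran}$ integrates cleanly into the theory-existence and Weak Deduction Theorem arguments, and this is unproblematic because $I_{Tran}$ is used only when there are no assumptions, exactly as in the purely implicational treatment of $\mathsf{WFI_{[\rightarrow]}}$.
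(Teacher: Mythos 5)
Your proof is correct and follows essentially the same route the paper intends: induction on $E$ with the implication case handled by the chain of equivalences from Lemma \ref{uuu}, reducing via the analog of Lemma \ref{xxxx22} to Lemma \ref{2kk}, which in turn rests on the Weak Deduction Theorem and Theorem \ref{2ci}. The only superfluous part is the conjunction case: the subscript $[\rightarrow,\wedge]$ in the theorem statement is evidently a typo carried over from the {\sf WF} section, since Subsection \ref{WFI} concerns the purely implicational fragment ${\sf WFI_{[\rightarrow]}}$, so no conjunction clause is needed (though including it does no harm).
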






\begin{thm}
\em\textbf{(Completeness and Soundness Theorem)} $ \Sigma\vdash_{{\sf WFI_{[\rightarrow ]}} } A $ if and only if $ \Sigma\vDash_{{\sf WFI_{[\rightarrow ]}} } A $.
\end{thm}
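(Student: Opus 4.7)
The plan is to follow exactly the template established for $\mathsf{WF_{[\rightarrow]}}$ in the previous section, reusing the Lindenbaum-style extension lemma (Theorem~\ref{2ci}), the canonical model construction, Lemma~\ref{xxxx22}, and the Truth Lemma stated just above. The new ingredient compared to $\mathsf{WF_{[\rightarrow]}}$ is that the canonical model must live in the class of generalized transitive $[\rightarrow]$-neighborhood frames, which is where the rule $I_{Tran}$ does its work.

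For soundness I would argue by induction on the length of a derivation in $\mathsf{WFI_{[\rightarrow]}}$. All axioms and rules inherited from $\mathsf{WF_{[\rightarrow]}}$ have already been shown sound against $[\rightarrow]$-neighborhood models, so the only thing left is to verify that $I_{Tran}$ preserves truth in any generalized transitive neighborhood model, which is a direct unfolding of the generalized transitivity condition on $N\!B$. Because $I_{Tran}$ is restricted to assumption-free derivations, the usual subtlety about weak versus full modus ponens does not interfere.

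For completeness, suppose $\Sigma\nvdash_{\mathsf{WFI_{[\rightarrow]}}} A$. By Theorem~\ref{2ci} there is a $\mathsf{WFI_{[\rightarrow]}}$-theory $\Pi\supseteq\Sigma$ with $A\notin\Pi$. Forming the canonical model $\mathfrak{M}^{\mathsf{WFI_{[\rightarrow]}}}$ as specified, the Truth Lemma just stated gives $\mathfrak{M}^{\mathsf{WFI_{[\rightarrow]}}},\Pi\Vdash B$ for every $B\in\Sigma$ and $\mathfrak{M}^{\mathsf{WFI_{[\rightarrow]}}},\Pi\nVdash A$, so $\Sigma\nVdash_{\mathsf{WFI_{[\rightarrow]}}} A$, which is the contrapositive of the desired implication.

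The main obstacle, and the only place where the argument genuinely differs from the $\mathsf{WF_{[\rightarrow]}}$ case, is the verification that the canonical model really is generalized transitive: given $\Gamma\in W_{\mathsf{WFI_{[\rightarrow]}}}$ with $(\|A\|,\|B\|),(\|B\|,\|C\|)\in N\!B_{\mathsf{WFI_{[\rightarrow]}}}(\Gamma)$, one needs $(\|A\|,\|C\|)\in N\!B_{\mathsf{WFI_{[\rightarrow]}}}(\Gamma)$. Using Lemma~\ref{xxxx22} this reduces to showing that if $A\rightarrow B,\,B\rightarrow C\in\Gamma$ then $A\rightarrow C\in\Gamma$, which is exactly the unrestricted rule on theories that was built into the definition of $\mathsf{WFI_{[\rightarrow]}}$-theory (and which, via $I_{Tran}$ applied at the theorem level when $n=0$, reflects the needed closure). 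Once this closure check is in place, everything else in the completeness argument is a verbatim reprise of the $\mathsf{WF_{[\rightarrow]}}$ proof.
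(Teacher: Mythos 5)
Your proposal is correct and matches the paper's (largely implicit) proof: the paper likewise reuses the $\mathsf{WF_{[\rightarrow]}}$ canonical-model template and only notes that the canonical model is transitive, which is exactly your reduction via Lemma~\ref{xxxx22} to the unrestricted closure clause $A\rightarrow B, B\rightarrow C \in \Gamma \Rightarrow A\rightarrow C \in \Gamma$ built into the definition of $\mathsf{WFI_{[\rightarrow]}}$-theories. Only your parenthetical remark about ``$I_{Tran}$ with $n=0$'' is a slight misstatement ($I_{Tran}$ has no parameter $n$; you seem to be thinking of $I_{Refl}$), but it plays no role in the argument.
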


\begin{prop}
A formula $ A \in \mathcal{L}_{[\rightarrow]} $ is provable in ${\sf WFI_{[\rightarrow]}} $ iff it is provable in ${\sf WFI}  $.
\end{prop}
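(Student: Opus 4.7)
The plan is to imitate Proposition~\ref{F12}. For the left-to-right direction, the axioms and rules 1--5 of Theorem~\ref{wf} are inherited from $\mathsf{WF}\subseteq \mathsf{WFI}$, so one only has to derive the new rule $I_{Tran}$ inside $\mathsf{WFI}$. Given premises $A\rightarrow(B\rightarrow C)$ and $A\rightarrow(C\rightarrow D)$, the $\mathsf{WF}$-style rule $\frac{X\rightarrow Y~~X\rightarrow Z}{X\rightarrow Y\wedge Z}$ (available because conjunction is present in $\mathsf{WFI}$) yields $A\rightarrow((B\rightarrow C)\wedge(C\rightarrow D))$, and composing this with the $\mathsf{WFI}$-axiom $(B\rightarrow C)\wedge(C\rightarrow D)\rightarrow(B\rightarrow D)$ via transitivity of implication in $\mathsf{WF}$ produces $A\rightarrow(B\rightarrow D)$.

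For the converse, suppose $A\in\mathcal{L}_{[\rightarrow]}$ is not provable in $\mathsf{WFI_{[\rightarrow]}}$. Then $A$ lies outside the empty theory $g$, which serves as the distinguished world in $\mathfrak{M}^{\mathsf{WFI_{[\rightarrow]}}}$, so by the Truth Lemma $g\nVdash A$. I would then extend this model to the full language by any valuation for the remaining connectives, noting that forcing of the purely implicational formula $A$ at $g$ is unaffected. It now suffices to show that the resulting neighborhood frame lies in the class defining $\mathsf{WFI}$-models; by soundness of $\mathsf{WFI}$ this would preclude $A$ from being a $\mathsf{WFI}$-theorem.

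The main point requiring care is therefore verifying that the canonical frame is generalized transitive, i.e.\ that $(X,Y),(Y,Z)\in N\!B_{\mathsf{WFI_{[\rightarrow]}}}(\Gamma)$ implies $(X,Z)\in N\!B_{\mathsf{WFI_{[\rightarrow]}}}(\Gamma)$. By the definition of the canonical neighborhood function, each such pair either arises from the inclusion clause $X\subseteq Y$, in which case transitivity is automatic, or it has the form $(||B||,||C||)$ with $B\rightarrow C\in\Gamma$ by Lemma~\ref{xxxx22}. Because $\mathsf{WFI_{[\rightarrow]}}$-theories are by definition closed under the unrestricted rule $\frac{B\rightarrow C~~C\rightarrow D}{B\rightarrow D}$, the formula-generated pairs are closed under composition, and the mixed cases reduce straightforwardly to these two, completing the argument.
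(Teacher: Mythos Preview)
Your overall strategy matches the paper's intended one: for the forward direction you derive $I_{Tran}$ in $\mathsf{WFI}$, and for the converse you use the canonical model of $\mathsf{WFI_{[\rightarrow]}}$ as a $\mathsf{WFI}$-countermodel. The paper gives no detailed argument here; it simply asserts that ``it can easily be shown that this canonical model is transitive'' and relies on the general pattern of Proposition~\ref{F12}.

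However, your verification of transitivity has a genuine gap in the mixed case, and the claim that these cases ``reduce straightforwardly'' is not correct. Suppose $(X,Y)\in N\!B_{\mathsf{WFI_{[\rightarrow]}}}(\Gamma)$ via the inclusion clause, so $X\subseteq Y$, and $(Y,Z)\in N\!B_{\mathsf{WFI_{[\rightarrow]}}}(\Gamma)$ via the formula clause, so $Y=\|C\|$, $Z=\|D\|$ with $C\rightarrow D\in\Gamma$. You need $(X,\|D\|)\in N\!B_{\mathsf{WFI_{[\rightarrow]}}}(\Gamma)$. But $X$ may be an arbitrary subset of $\|C\|$ that is not of the form $\|A\|$ for any $[\rightarrow]$-formula $A$ and is not contained in $\|D\|$; such $X$ exist whenever $\|C\|\not\subseteq\|D\|$, since there are only countably many formulas but uncountably many subsets of $W$. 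Neither defining clause of the canonical neighborhood function then applies, so the canonical frame as written is \emph{not} transitive. The symmetric mixed case (formula pair followed by inclusion pair with a non-definable right component) fails for the same reason. The paper's bare assertion faces the same difficulty.

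The repair is to replace each $N\!B(\Gamma)$ by its transitive closure and check that Lemma~\ref{xxxx22} survives. Any chain in the original $N\!B(\Gamma)$ linking $\|A\|$ to $\|B\|$ can be shortened so that all intermediate sets are truth sets (a link through a non-truth-set is necessarily an inclusion link and can be absorbed into its neighbor); the resulting chain of pairs $(\|A\|,\|C_1\|),\ldots,(\|C_n\|,\|B\|)$ yields $A\rightarrow C_1,\ldots,C_n\rightarrow B\in\Gamma$ by Lemma~\ref{xxxx22}, and closure of $\Gamma$ under the unrestricted rule $\dfrac{E\rightarrow F\quad F\rightarrow G}{E\rightarrow G}$ gives $A\rightarrow B\in\Gamma$. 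Hence the closure adds no new pairs of the form $(\|A\|,\|B\|)$, the Truth Lemma is preserved, and since $N\!B(g)$ was already transitive the root remains omniscient. With this modification your argument goes through.
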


\subsection{The implicational fragment of ${\sf WF}\widehat{{\sf C}}  $}
The logic ${\sf WF}\widehat{{\sf C}}  $ obtained by adding the axiom $(A\rightarrow B\wedge C)\rightarrow (A\rightarrow B)\wedge (A\rightarrow C)$ to {\sf WF} is known to be complete w.r.t.\ the class of all generalized neighborhood frames that are closed under upset. We will add the following rule $ {\sf I_{L}} $ to $\mathsf{WF_{[\rightarrow]}}$  to obtain an axiomatization of ${\sf WF}\widehat{{\sf C}}_{[\rightarrow]} $.
$$ I_{L}:~~~~ \frac{A\rightarrow B}{(C\rightarrow A)\rightarrow (C\rightarrow B)}$$

\begin{lem}$\cite{DF2}$
The logic ${\sf WF}\widehat{{\sf C}}  $ equals the logic {\sf WF}  plus the rule $ I_{L} $.
\end{lem}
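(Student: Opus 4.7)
The plan is to establish mutual derivability of the $\widehat{{\sf C}}$-axiom and the rule $I_L$ over the base logic ${\sf WF}$ (with full conjunction available).

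For the direction ${\sf WF} + I_L \vdash (A \rightarrow B\wedge C) \rightarrow (A \rightarrow B) \wedge (A \rightarrow C)$, I would apply $I_L$ to the conjunction-elimination axioms $B\wedge C \rightarrow B$ and $B\wedge C \rightarrow C$, obtaining $(A \rightarrow B\wedge C) \rightarrow (A \rightarrow B)$ and $(A \rightarrow B\wedge C) \rightarrow (A \rightarrow C)$ respectively. The standard conjunction-introduction rule on implications (the analogue of rule 9 in the $[\rightarrow,\wedge]$-axiomatization) then combines these two theorems into the desired $\widehat{{\sf C}}$-axiom.

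For the converse direction ${\sf WF} + \widehat{{\sf C}} \vdash I_L$, suppose $\vdash A \rightarrow B$; the goal is $\vdash (C \rightarrow A) \rightarrow (C \rightarrow B)$. First I would establish $\vdash A \leftrightarrow A\wedge B$: the forward implication follows from $A \rightarrow A$ (axiom 1) together with the hypothesis $A \rightarrow B$ by conjunction introduction on implications, and the backward implication is just the projection axiom. Now rule 5 of ${\sf WF}$, applied with $C \leftrightarrow C$ on the left and $A \leftrightarrow A\wedge B$ on the right, yields $(C \rightarrow A) \rightarrow (C \rightarrow A\wedge B)$. Chaining this (by rule 4, transitivity for theorems) with the $\widehat{{\sf C}}$-instance $(C \rightarrow A\wedge B) \rightarrow (C \rightarrow A)\wedge(C \rightarrow B)$ and then with the projection $(C \rightarrow A)\wedge(C \rightarrow B) \rightarrow (C \rightarrow B)$ produces exactly the conclusion $(C \rightarrow A) \rightarrow (C \rightarrow B)$ required by $I_L$.

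The main obstacle is merely bookkeeping: making sure the auxiliary rules I invoke—conjunction introduction on theorem-implications, and the equivalence rule 5 of ${\sf WF}$—really are available and applicable here (rule 5 in particular has strong premises but in this application the ``left-hand'' pair is the trivial $C \leftrightarrow C$, so the real work is carried by the derived equivalence $A \leftrightarrow A\wedge B$). Once this is verified, both directions reduce to short syntactic derivations, and no new semantic argument is needed.
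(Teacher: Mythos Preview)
Your argument is correct. Note, however, that the paper does not actually supply a proof of this lemma: the citation marker indicates that the result is imported from the earlier paper \cite{DF2}, and no derivation appears in the present text. So there is no ``paper's own proof'' to compare against here.

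A small bookkeeping remark: the rules you call ``rule~4'' and ``rule~5 of {\sf WF}'' match the numbering of the \emph{implicational} system in Theorem~\ref{wf}, not the full {\sf WF} of Appendix~\ref{neighbor} (where transitivity is rule~13 and the equivalence rule is rule~14, and rule~5 is conjunction-introduction on the consequent). Since the lemma concerns the full logic {\sf WF} with all connectives, it would be cleaner to cite the Appendix~\ref{neighbor} numbering. The substance of your derivations is unaffected: in each direction you invoke only rules and axioms genuinely present in {\sf WF}, and the chaining via the equivalence rule with the trivial pair $C\leftrightarrow C$ is exactly the right trick.
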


Note that in the following ${\sf WF}_{[\rightarrow]}+I_{L}$   means that in $ I_{L}$ only implicational formulas are substituted.
We just follow the proof of the completeness of $ {\sf WF_{[\rightarrow]}} $ and the definition of the canonical model insuring that this model is closed under upset. We can then claim

\begin{thm} \label{C}
${\sf WF}_{[\rightarrow]}+ I_{L}={\sf WF}\widehat{{\sf C}}_{[\rightarrow]}  $.
\end{thm}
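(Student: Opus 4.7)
The plan is to adapt the canonical-model completeness argument for $\mathsf{WF_{[\rightarrow]}}$ so that the extra rule $I_L$ forces the canonical neighborhood function to be closed under upset in its second coordinate, making the canonical model a $\mathsf{WF}\widehat{{\sf C}}$-model. The equality of the two logics will then follow by soundness in one direction and conservativity in the other.

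For the inclusion $\mathsf{WF_{[\rightarrow]}}+I_L\subseteq \mathsf{WF}\widehat{{\sf C}}_{[\rightarrow]}$, I would invoke the cited lemma $\mathsf{WF}\widehat{{\sf C}}=\mathsf{WF}+I_L$: every axiom and rule of $\mathsf{WF_{[\rightarrow]}}+I_L$ is already available in $\mathsf{WF}\widehat{{\sf C}}$, so any $\rightarrow$-formula provable in the smaller system already lives in $\mathsf{WF}\widehat{{\sf C}}_{[\rightarrow]}$. For the converse, I would repeat the pattern of the previous subsections. The Weak Deduction Theorem gains a single induction step for $I_L$ (which I would restrict to assumption-free derivations, parallel to the handling of $I_{Tran}$); the analogues of Lemma~\ref{1c} and Lemma~\ref{1kk} then go through unchanged. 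I would define the canonical model exactly as for $\mathsf{WF_{[\rightarrow]}}$, but enlarge the neighborhood so that $(\|C\|,Y)\in N\!B(\Gamma)$ whenever some $B$ with $C\to B\in\Gamma$ satisfies $\|B\|\subseteq Y$. The right-to-left direction of the analogue of Lemma~\ref{xxxx} then survives via Lemma~\ref{1kk}'s analogue, and upset-closure in the second coordinate is immediate: if $(\|C\|,X)\in N\!B(\Gamma)$ and $X\subseteq Y$, either the pair came in via the subset clause, in which case $X\subseteq Y$ yields $(\|C\|,Y)$ at once, or some $B$ with $C\to B\in\Gamma$ satisfies $\|B\|\subseteq X\subseteq Y$, which again places $(\|C\|,Y)$ in the neighborhood by the enlarged clause. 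The Truth Lemma and Completeness Theorem then transfer verbatim, and since the canonical model validates $\mathsf{WF}\widehat{{\sf C}}$, any $\rightarrow$-formula not provable in $\mathsf{WF_{[\rightarrow]}}+I_L$ is also refuted in some $\mathsf{WF}\widehat{{\sf C}}$-model, giving the remaining inclusion.

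The main obstacle is calibrating the canonical neighborhood so that upset-closure is genuinely built into the ambient frame while the key equivalence $(\|A\|,\|B\|)\in N\!B(\Gamma)\Leftrightarrow A\to B\in\Gamma$ is preserved in the form the Truth Lemma requires; $I_L$ does exactly the work needed here, because its semantic content is precisely that upset-closure need only be verified on formula-indexed pairs, and these are then inherited along the equivalence $\|A\|\subseteq\|B\|\Leftrightarrow \vdash A\to B$ supplied by the analogue of Lemma~\ref{1kk}.
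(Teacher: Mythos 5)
Your proposal is correct and follows essentially the same route as the paper: one inclusion via the cited lemma ${\sf WF}\widehat{{\sf C}}={\sf WF}+I_L$, the other by rerunning the $\mathsf{WF_{[\rightarrow]}}$ canonical-model completeness argument with the canonical neighborhood function arranged to be closed under upset. Your explicit enlargement clause is a sound way to implement the upset closure the paper only sketches; the only point worth making explicit is that recovering $A\rightarrow B\in\Gamma$ from a pair added by the enlarged clause uses $I_L$ together with the equivalence rule (rule 5) and the analogue of Lemma~\ref{1kk}, which is exactly the inheritance you indicate.
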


\subsection{The implicational fragment of ${\sf WF}\widehat{{\sf D}}  $}
The logic ${\sf WF}\widehat{{\sf D}}  $ obtained by adding the axiom $(A\vee B\rightarrow C)\rightarrow (A\rightarrow C)\wedge (B\rightarrow C)$ to {\sf WF} is known to be complete w.r.t.\ the class of all generalized neighborhoood frames that are closed under downset. We will add the following rule $ I_{R} $ to $\mathsf{WF_{[\rightarrow]}}$  to obtain an axiomatization of ${\sf WF}\widehat{{\sf D}}_{[\rightarrow]} $.
$$ I_{R}:~~~~\frac{A\rightarrow B}{(B\rightarrow C)\rightarrow (A\rightarrow C)}$$

\begin{lem}$\cite{DF2}$
The logic ${\sf WF}\widehat{{\sf D}}  $ equals the logic {\sf WF}  plus the rule $ I_{R} $.
\end{lem}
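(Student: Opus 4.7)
The plan is to establish both inclusions between the two logics.

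For the direction that every theorem of ${\sf WF}\widehat{{\sf D}}$ is a theorem of ${\sf WF}+I_R$, it suffices to derive the axiom $(A\vee B\rightarrow C)\rightarrow (A\rightarrow C)\wedge (B\rightarrow C)$ inside ${\sf WF}+I_R$. Starting from the WF-theorems $A\rightarrow A\vee B$ and $B\rightarrow A\vee B$, a single application of $I_R$ to each yields $(A\vee B\rightarrow C)\rightarrow (A\rightarrow C)$ and $(A\vee B\rightarrow C)\rightarrow (B\rightarrow C)$. Conjoining these by means of the rule that if $X\rightarrow Y$ and $X\rightarrow Z$ then $X\rightarrow Y\wedge Z$, which is available in full ${\sf WF}$, produces the desired axiom.

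For the converse direction, one must show that $I_R$ is admissible in ${\sf WF}\widehat{{\sf D}}$. Assume $\vdash A\rightarrow B$; I plan to derive $\vdash (B\rightarrow C)\rightarrow (A\rightarrow C)$ as follows. First, $A\vee B$ and $B$ are provably equivalent: $\vdash B\rightarrow A\vee B$ is a ${\sf WF}$ disjunction axiom, while $\vdash A\vee B\rightarrow B$ follows from $\vdash A\rightarrow B$ and $\vdash B\rightarrow B$ by disjunction elimination. Applying the congruence rule (rule 5 of Theorem~\ref{wf}) to these two implications together with the trivial $C\leftrightarrow C$ premises, I obtain $\vdash (B\rightarrow C)\rightarrow (A\vee B\rightarrow C)$. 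Composing this by transitivity (rule 4) with an instance of axiom $\widehat{{\sf D}}$ gives $\vdash (B\rightarrow C)\rightarrow (A\rightarrow C)\wedge (B\rightarrow C)$, and one final transitivity step with the projection $(A\rightarrow C)\wedge (B\rightarrow C)\rightarrow (A\rightarrow C)$ delivers the conclusion.

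The main obstacle lies in the second direction, where one must verify that ${\sf WF}$'s rather restrictive congruence rule is really strong enough to propagate the equivalence of $A\vee B$ with $B$ into the implications $A\vee B\rightarrow C$ and $B\rightarrow C$, and that every intermediate conclusion is genuinely a theorem so that the weak form of modus ponens in ${\sf WF}$ does not obstruct the argument. The forward direction is routine, since both applications of $I_R$ there are made to genuine WF theorems.
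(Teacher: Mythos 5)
Your proposal is correct, and in fact the paper offers no proof to compare it with: the lemma is quoted from \cite{DF2} and stated without argument here. Your two-inclusion strategy is the expected one: deriving $\widehat{{\sf D}}$ in ${\sf WF}+I_R$ from the disjunction axioms $A\rightarrow A\vee B$, $B\rightarrow A\vee B$ via $I_R$ and the rule $\frac{A\rightarrow B~~A\rightarrow C}{A\rightarrow B\wedge C}$, and conversely simulating $I_R$ in ${\sf WF}\widehat{{\sf D}}$ by replacing $B$ with the provably equivalent $A\vee B$ and composing with an instance of $\widehat{{\sf D}}$ and a projection; all rule applications are to theorems, so the restriction to weak $M\!P$ causes no trouble. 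One small repair: since the statement concerns the full logics rather than their $[\rightarrow]$-fragments, the congruence step should be justified by rule 14 of the Appendix~\ref{neighbor} axiomatization of {\sf WF} (the $\leftrightarrow$-replacement rule, together with conjunction introduction and conjunct extraction) rather than by ``rule 5 of Theorem~\ref{wf}'', which is the corresponding rule of the implicational fragment; the two are interderivable for this purpose, so the argument goes through unchanged.
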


Note that in the following ${\sf WF}_{[\rightarrow]}+I_{R}$   means that in $ {\sf I_{R}}$ only implicational formulas are substituted.
We just follow the proof of the completeness of $ {\sf WF_{[\rightarrow]}} $ and the definition of the canonical model insuring that this model is closed under downset. Similarly,  we can then claim

\begin{thm} \label{D}
${\sf WF}_{[\rightarrow]}+ I_{R}={\sf WF}\widehat{{\sf D}}_{[\rightarrow]}  $.
\end{thm}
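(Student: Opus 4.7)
The plan is to adapt the strong completeness proof given for $\mathsf{WF_{[\rightarrow]}}$ in the obvious way, tracking carefully where the rule $I_{R}$ must be used so that the canonical model ends up closed under downset. The forward inclusion $\mathsf{WF_{[\rightarrow]}}+I_{R}\subseteq \mathsf{WF}\widehat{\mathsf{C}}_{[\rightarrow]}$ (so, the analogue with $I_R$) is immediate, since by the cited lemma $\mathsf{WF}\widehat{\mathsf{D}}=\mathsf{WF}+I_{R}$, and so every derivation in $\mathsf{WF_{[\rightarrow]}}+I_{R}$ can be mirrored inside $\mathsf{WF}\widehat{\mathsf{D}}$; soundness of $I_{R}$ with respect to the class of generalized neighborhood models closed under downset is routine.

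For the reverse inclusion I would proceed as in Theorem~\ref{C}. First, declare $I_{R}$ to be usable in $\vdash_{\mathsf{WF_{[\rightarrow]}}+I_{R}}$ only when its premise has no assumptions, so that the weak deduction theorem carries over verbatim from Theorem~\ref{b}; no new induction step appears because $I_{R}$ is applied only to theorems. Next, define $\mathsf{WF_{[\rightarrow]}}+I_R$-theories in the usual way (closed under provable implication and under theorems), reprove the Lindenbaum-style extension lemma as in Theorem~\ref{1c}, and note that the analogue of Lemma~\ref{1kk} still holds: $\|C\|\subseteq\|D\|$ iff $\vdash C\to D$.

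The canonical model $\mathfrak{M}^{\mathsf{WF}\widehat{\mathsf{D}}_{[\rightarrow]}}$ is defined exactly as for $\mathsf{WF_{[\rightarrow]}}$, with $N\!B(\Gamma)$ the smallest set containing every pair $(\|A\|,\|B\|)$ with $A\to B\in\Gamma$ together with every pair $(X,Y)$ such that $X\subseteq Y$. The essential new step is to show that this $N\!B(\Gamma)$ is closed under downset: if $(X,Y)\in N\!B(\Gamma)$ and $X'\subseteq X$ (with $X'=\|A'\|$, which is all that is needed to carry the truth lemma), then $(X',Y)\in N\!B(\Gamma)$. If $(X,Y)$ is there via the subset clause, closure is automatic. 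Otherwise $X=\|A\|$, $Y=\|B\|$ with $A\to B\in\Gamma$, and from $\|A'\|\subseteq\|A\|$ we obtain $\vdash A'\to A$; applying $I_{R}$ gives $\vdash (A\to B)\to (A'\to B)$, and since $\Gamma$ is a theory, $A'\to B\in\Gamma$, so $(\|A'\|,\|B\|)\in N\!B(\Gamma)$ by the defining clause.

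With closure under downset in hand, the analogue of Lemma~\ref{xxxx} (that $(\|A\|,\|B\|)\in N\!B(\Gamma)$ iff $A\to B\in\Gamma$) goes through unchanged, and thence the truth lemma and completeness. The only genuine obstacle is the downset closure above, and it dissolves once one notices that $I_R$ is precisely the syntactic counterpart of that closure condition, restricted to the named sets $\|A'\|$ that the truth lemma ever examines.
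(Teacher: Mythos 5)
Your proposal is correct and takes essentially the same route as the paper, which likewise just adapts the canonical-model completeness proof of $\mathsf{WF_{[\rightarrow]}}$ (weak deduction theorem with $I_R$ restricted to theorems, Lindenbaum lemma, Lemma~\ref{1kk}) and verifies that $I_R$ forces the canonical neighborhood function to be closed under downset on the definable sets $\|A'\|$, which is exactly your key step. The only blemish is the slip where you wrote $\mathsf{WF}\widehat{\mathsf{C}}_{[\rightarrow]}$ instead of $\mathsf{WF}\widehat{\mathsf{D}}_{[\rightarrow]}$ when stating the forward inclusion; the argument itself is the intended one.
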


\subsection{The implicational fragment of ${\sf WF}\widehat{{\sf C}}\widehat{{\sf D}} $}\label{WFCD}
Combining the rules $I_L$ and $I_R$ leads to
$$I_{LR}:~\dfrac{A\rightarrow B~~~~C\rightarrow D}{(B\rightarrow C)\rightarrow(A\rightarrow D)} $$

\begin{prop}\label{bh}
Over the logic  ${\sf WF}_{[\rightarrow]}  $, the rules $ I_{R} $ and $I_{L}  $ are equivalent to $ I_{LR} $.
\end{prop}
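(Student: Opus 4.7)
The plan is to establish two directions: first, that $I_{LR}$ is derivable from $I_L$ together with $I_R$ (over $\mathsf{WF}_{[\rightarrow]}$), and second, that each of $I_L$ and $I_R$ is derivable from $I_{LR}$ alone. Both directions should be short, using only rule 4 of Theorem~\ref{wf} (the composition rule for $\rightarrow$) and axiom 1 ($A\rightarrow A$) in addition to the rules in question.

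For the forward direction, suppose we are given the premises $A\rightarrow B$ and $C\rightarrow D$ of $I_{LR}$. Applying $I_R$ to $A\rightarrow B$ yields $(B\rightarrow C)\rightarrow(A\rightarrow C)$. Applying $I_L$ to $C\rightarrow D$ (with the ambient formula $A$) yields $(A\rightarrow C)\rightarrow(A\rightarrow D)$. Now rule 4 applied to these two implications gives $(B\rightarrow C)\rightarrow(A\rightarrow D)$, which is exactly the conclusion of $I_{LR}$.

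For the converse direction, both derivations exploit axiom 1. To simulate $I_L$: given $A\rightarrow B$, apply $I_{LR}$ with first premise $C\rightarrow C$ and second premise $A\rightarrow B$. The conclusion of $I_{LR}$ is then $(C\rightarrow A)\rightarrow(C\rightarrow B)$, which is the conclusion of $I_L$. To simulate $I_R$: given $A\rightarrow B$, apply $I_{LR}$ with first premise $A\rightarrow B$ and second premise $C\rightarrow C$; the conclusion is $(B\rightarrow C)\rightarrow(A\rightarrow C)$, matching $I_R$.

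There is no real obstacle here; the only thing worth being careful about is that $I_L$, $I_R$, and $I_{LR}$ are all rules rather than axiom schemes, so one must check that the applications above respect the restriction (no open assumptions at the point of use), which is automatic since in each derivation the premises fed into the rules are either the given premise of the target rule or the theorem $C\rightarrow C$.
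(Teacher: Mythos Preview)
Your proof is correct and is precisely the natural argument the paper has in mind; the paper itself merely says ``The proof is easy'' without spelling out any details, and your derivations fill this in exactly as expected.
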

\begin{proof}
The proof is easy.
\end{proof}


\begin{thm}
$ {\sf WF}\widehat{C} \widehat{D}_{[\rightarrow]}= {\sf WF}_{[\rightarrow]} +I_{LR}  $.
\end{thm}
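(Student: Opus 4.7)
The plan is to establish both inclusions by the method developed in the previous subsections.

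For soundness (${\sf WF}_{[\rightarrow]} + I_{LR} \subseteq {\sf WF}\widehat{{\sf C}}\widehat{{\sf D}}_{[\rightarrow]}$), I would invoke Proposition~\ref{bh}: $I_{LR}$ is interderivable over ${\sf WF}_{[\rightarrow]}$ with the pair $\{I_L, I_R\}$. The rule $I_L$ was shown in Theorem~\ref{C} to be sound on neighborhood frames closed under upset, and $I_R$ in Theorem~\ref{D} to be sound on frames closed under downset. Since ${\sf WF}\widehat{{\sf C}}\widehat{{\sf D}}$-frames satisfy both closure properties, $I_{LR}$ is sound there.

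For completeness, I would construct the canonical model of ${\sf WF}_{[\rightarrow]} + I_{LR}$ exactly as for ${\sf WF}_{[\rightarrow]}$: worlds are the $[\rightarrow]$-theories closed under $I_{LR}$ (equivalently, by Proposition~\ref{bh}, closed under both $I_L$ and $I_R$), with the same neighborhood definition. The Weak Deduction Theorem, the Lindenbaum-style Theorem~\ref{1c}-analogue, Lemma~\ref{1kk}, Lemma~\ref{xxxx}, and the Truth Lemma transfer with only cosmetic changes, as in the corresponding routine in Subsection~\ref{WFI}. The only new verification is that the canonical neighborhood function is closed under both upsets and downsets. For upset closure: if $(\|A\|, \|B\|) \in N\!B(\Gamma)$, i.e.\ $A \rightarrow B \in \Gamma$, and $\|B\| \subseteq \|B'\|$, then by (the analogue of) Lemma~\ref{1kk}, $\vdash B \rightarrow B'$; by $I_L$, $\vdash (A \rightarrow B) \rightarrow (A \rightarrow B')$, so $A \rightarrow B' \in \Gamma$, giving $(\|A\|, \|B'\|) \in N\!B(\Gamma)$. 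The argument for downset closure via $I_R$ is dual. Hence the canonical model is a ${\sf WF}\widehat{{\sf C}}\widehat{{\sf D}}$-model, and completeness follows in the usual way.

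The main (mild) obstacle I anticipate is the bookkeeping around when $I_{LR}$ may be applied with assumptions versus only on theorems, mirroring the restriction imposed on $I_{Tran}$ in Subsection~\ref{WFI}. Specifically, $I_{LR}$ should be applied only to premises derived without assumptions in the definition of $\vdash_{{\sf WF}\widehat{{\sf C}}\widehat{{\sf D}}_{[\rightarrow]}}$, so that the Weak Deduction Theorem goes through; the derived transitivity-style rule from Rule 4 of Theorem~\ref{wf} remains unrestricted. Once this restriction is set consistently with the analogous rule-clause in the definition of theory, the inductive step for $I_{LR}$ in the Weak Deduction Theorem is immediate—from $\vdash A \rightarrow (B \rightarrow C)$ and $\vdash C \rightarrow D$ apply $I_{LR}$ to obtain the desired conclusion—and all downstream lemmas go through verbatim.
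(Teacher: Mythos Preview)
Your proposal is correct and follows essentially the same route as the paper, which simply cites Theorems~\ref{C}, \ref{D} and Proposition~\ref{bh} and declares the result obvious; you have merely spelled out the canonical-model verification those results encapsulate. One small wobble: since $I_{LR}$ (like $I_L$ and $I_R$) is applied only when there are no assumptions, there is no inductive step for it in the Weak Deduction Theorem at all---so your final paragraph's attempted step is unnecessary, and the instance you wrote there does not match the shape of $I_{LR}$ anyway.
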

\begin{proof}
The proof is obvious by Theorems \ref{C}, \ref{D}  and Proposition \ref{bh}.
\end{proof}

\subsection{Unresolved Fragments and Conjectures related to ${\sf WF } $}\label{unresolve}

\textbf{Conjecture 1.} The axiomatization of $ {\sf WF}_{[\rightarrow]} $, $ {\sf WF_{N}}_{[\rightarrow]}$, ${\sf WFC}_{[\rightarrow]}$ and ${\sf WFD}_{[\rightarrow]}$ are the same.\\


Conjecture 1 is an especially annoying open question. The distinctness {\sf WF} and $\mathsf{WF_N}$ kept us busy for a while when we started our investigations of weak subintuitionistic logics~\cite{FD}, but it seems obvious now. The open question is whether this can be expressed using $\rightarrow$ only. In the corresponding $[\wedge, \rightarrow]$-fragments the answers are easy.\\

\noindent\textbf{Conjecture 2.} The following statements hold:
\begin{itemize}
    \item[(a)] The axiomatizations of ${\sf WF}\widehat{{\sf C}}\widehat{{\sf D}}_{[\rightarrow]}$ and ${\sf WF_{N_{2}}}_{[\rightarrow]}$ are the same.
    \item[(b)] The systems ${\sf WF}_{[\rightarrow]}$, ${\sf WF}\widehat{{\sf C}}_{[\rightarrow]}$, and ${\sf WFI}_{[\rightarrow]}$ are pairwise distinct in terms of their theorems. 
\end{itemize}

\noindent
We have been able to verify that the axiom $((p \rightarrow p) \rightarrow r) \rightarrow (q \rightarrow r)$ is derivable in ${\sf WF}\widehat{{\sf D}}_{[\rightarrow]}$, while it is not provable in ${\sf WF}_{[\rightarrow]}$. However, we have not yet identified a similar formula that separates ${\sf WF}\widehat{{\sf C}}_{[\rightarrow]}$ from ${\sf WF}_{[\rightarrow]}$ or ${\sf WFI}_{[\rightarrow]}$.

\medskip

\noindent
As long as no implicational formula has been identified that is valid in  ${\sf WF}\widehat{\sf C} _{[\rightarrow]}$, ${\sf WFI}_{[\rightarrow]}$, or  ${\sf WF_{N}}_{[\rightarrow]}$ but not in ${\sf WF}_{[\rightarrow]}$, these challenges represent a serious unresolved problem in this area (see Figure \ref{fig:i}).

\begin{cor}
${\sf WF}_{[\rightarrow, \wedge]}\neq {\sf WF_{N}}_{[\rightarrow, \wedge]}$.
\end{cor}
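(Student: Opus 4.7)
The plan is to extract a separating formula in $\mathcal{L}_{[\rightarrow,\wedge]}$ directly from the axiomatization of ${\sf WF_N}$. According to Appendix~\ref{neighbor}, ${\sf WF_N}$ is obtained from ${\sf WF}$ by adding an axiom schema $N$ whose formulation already lies in $\mathcal{L}_{[\rightarrow,\wedge]}$: it is a closure condition on neighborhoods that essentially mentions $\wedge$. Every substitution instance of $N$ is therefore an element of ${\sf WF_N}_{[\rightarrow,\wedge]}$ by definition, so the right-hand side of the claimed inequality is guaranteed to contain $N$.

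Next I would argue that no suitable instance of $N$ is provable in ${\sf WF}$. This is precisely the content of the semantic separation between ${\sf WF}$ and ${\sf WF_N}$ established in the authors' earlier work on weak subintuitionistic logics: one exhibits a neighborhood model of ${\sf WF}$ whose neighborhood system fails the specific closure property witnessed by $N$, so $N$ fails at some world of that model. By soundness of ${\sf WF}$ for its neighborhood semantics, $N \notin {\sf WF}$, and since the formula chosen is in $\mathcal{L}_{[\rightarrow,\wedge]}$, the restriction of the countermodel to $[\rightarrow,\wedge]$-valuations remains a legitimate $[\rightarrow,\wedge]$-neighborhood model of ${\sf WF}$.

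The final step is the bridge from ${\sf WF}$ to its fragment. The Proposition at the end of the previous subsection states that a formula $A\in\mathcal{L}_{[\rightarrow,\wedge]}$ is provable in ${\sf WF_{[\rightarrow,\wedge]}}$ iff it is provable in ${\sf WF}$. Combining this with $N\notin {\sf WF}$ yields $N\notin {\sf WF_{[\rightarrow,\wedge]}}$, while $N\in {\sf WF_N}_{[\rightarrow,\wedge]}$, which establishes the strict inequality claimed by the Corollary.

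The only real work is bookkeeping: producing an explicit ${\sf WF}$-neighborhood model on which the instance of the $\mathsf{WF_N}$-closure fails while all of the ${\sf WF}$-axioms and rules are preserved. This is routine once one has Appendix~\ref{neighbor} at hand, and is precisely the \emph{easy} $[\rightarrow,\wedge]$ answer alluded to in the paragraph preceding the corollary, which contrasts sharply with the still-open purely implicational version stated in Conjecture~1. The main obstacle, if any, is choosing the smallest possible neighborhood frame that makes the verification transparent; nothing deeper is needed.
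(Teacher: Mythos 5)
There is a genuine gap at the very first step. You treat $N$ as an \emph{axiom schema} formulated in $\mathcal{L}_{[\rightarrow,\wedge]}$ and propose its instances as the separating formulas. But in this setting (Appendix B) $N$ is a \emph{rule},
$$\frac{A\rightarrow B\vee C\quad C\rightarrow A\vee D\quad A\wedge C\wedge D\rightarrow B\quad A\wedge C\wedge B\rightarrow D}{(A\rightarrow B)\leftrightarrow (C\rightarrow D)},$$
whose premises contain disjunction, so its ``formulation'' is not an $[\rightarrow,\wedge]$-formula at all, and adding a rule does not by itself place any new formula into ${\sf WF_N}_{[\rightarrow,\wedge]}$. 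Consequently the object you intend to use as the witness of the inequality does not exist as stated: you still owe a concrete $[\rightarrow,\wedge]$-formula that is derivable \emph{with the help of} the rule $N$ (i.e., an instance of its conclusion whose premises are already ${\sf WF}$-theorems) but is not ${\sf WF}$-valid. The remaining scaffolding of your argument (a ${\sf WF}$-neighborhood countermodel violating the closure-under-equivalence property, soundness, and the bridge proposition identifying ${\sf WF}_{[\rightarrow,\wedge]}$ with the $[\rightarrow,\wedge]$-formulas provable in ${\sf WF}$) is fine, but without the witness it proves nothing.

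The paper closes exactly this gap by choosing the formula $N_b:\ (A\rightarrow B)\leftrightarrow(A\rightarrow A\wedge B)$, which lies in the fragment once $\leftrightarrow$ is read as a conjunction of implications. It is derivable in ${\sf WF_N}$ (for instance, instantiate $N$ with $C:=A$, $D:=A\wedge B$; all four premises are ${\sf WF}$-theorems), equivalently it is valid on frames closed under equivalence since $\overline{X}\cup Y=\overline{X}\cup(X\cap Y)$, while it fails on some ${\sf WF}$-frame not satisfying the $N_b$-condition, hence $N_b\notin{\sf WF}$ by soundness. Replacing your ``instance of $N$'' by $N_b$ (or any similar $[\rightarrow,\wedge]$-consequence of the rule) and then invoking the bridge proposition as you do would repair the argument and make it essentially the paper's proof.
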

\begin{proof}
It is easy to prove that the axiom $ N_{b}$ is valid in $  {\sf WF_{N}}$, whereas it is not valid in $  {\sf WF}$.
\end{proof}

\begin{cor}
$ {\sf WF}\widehat{\sf C} \widehat{\sf D}_{[\rightarrow, \wedge]}\neq {\sf WF_{N_{2}}}_{[\rightarrow, \wedge]}$.
\end{cor}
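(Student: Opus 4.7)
The plan is to mirror the strategy used for the preceding corollary: exhibit a single formula of $\mathcal{L}_{[\rightarrow,\wedge]}$ that is provable in $\mathsf{WF_{N_2}}$ but fails in some neighborhood model of $\mathsf{WF}\widehat{\sf C}\widehat{\sf D}$. The natural candidate is the characteristic axiom $N_2$ of $\mathsf{WF_{N_2}}$ itself, which according to Appendix~\ref{neighbor} already lies in the implication-conjunction fragment; its soundness with respect to $\mathsf{WF_{N_2}}$-models is then immediate by definition.

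The main step is to construct a small generalized neighborhood frame $\langle W, g, N\!B\rangle$ that is closed under both upset and downset, and therefore validates the axioms $\widehat{\sf C}$ and $\widehat{\sf D}$, but that does \emph{not} satisfy the extra neighborhood condition corresponding to $N_2$. One then picks a valuation $V$ so that the denotations of the subformulas of $N_2$ realize exactly the configuration that witnesses the failure at some world $w$.

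The hard part is pinning down a concrete countermodel: $N\!B$ must be rich enough to be closed upward and downward in $\mathcal{P}(W)\times\mathcal{P}(W)$ but meagre enough to omit whichever additional closure the $N_2$-condition demands. The standard trick is to take $W$ of size three or four, to let each $N\!B(w)$ consist precisely of the pairs forced in by the upset and downset clauses starting from a minimal seed, and to pick $V$ so that the antecedent of $N_2$ is witnessed via a forced pair while the consequent can only be realized by a pair outside this closure. Once such a frame and valuation are produced, verifying that $N_2$ fails at $w$ is a direct computation from the truth clause for $\rightarrow$.
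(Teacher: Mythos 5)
There is a genuine gap, and it lies in your choice of separating principle. $N_2$ is not an axiom of $\mathsf{WF_{N_2}}$ but a \emph{rule}, namely
$$\frac{C\rightarrow A\vee D\qquad A\wedge C\wedge B\rightarrow D}{(A\rightarrow B)\rightarrow (C\rightarrow D)},$$
and its premises contain disjunction. So there is no ``axiom $N_2$'' lying in the $[\rightarrow,\wedge]$-fragment that you could exhibit as a theorem of $\mathsf{WF_{N_2}}$ and refute in a $\mathsf{WF}\widehat{\sf C}\widehat{\sf D}$-model; the first step of your plan has nothing to instantiate it with. The paper instead separates the two fragments with the formula $C_W:\ (A\rightarrow B)\rightarrow (C\wedge A\rightarrow C\wedge B)$, which genuinely belongs to $\mathcal{L}_{[\rightarrow,\wedge]}$. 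It is derivable in $\mathsf{WF_{N_2}}$ by one application of the rule $N_2$ with $C:=C\wedge A$ and $D:=C\wedge B$, since the corresponding premises $C\wedge A\rightarrow A\vee(C\wedge B)$ and $A\wedge(C\wedge A)\wedge B\rightarrow C\wedge B$ are already $\mathsf{WF}$-theorems; semantically it corresponds to closure under weak intersection, which is not implied by closure under upset and downset, so it fails on a suitable $\mathsf{WF}\widehat{\sf C}\widehat{\sf D}$-frame.

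A secondary weakness is that even granting a correct candidate formula, your argument stops at a description of how one \emph{would} build the countermodel (``the hard part is pinning down a concrete countermodel''), without producing the frame, the seed pairs, or the valuation. The paper's own proof is admittedly terse, but the content of the corollary is precisely the existence of such a witness, so at minimum you must name a formula of $\mathcal{L}_{[\rightarrow,\wedge]}$ that does the job and indicate why it is provable on one side and refutable on the other. With $C_W$ in hand both halves become routine; with the rule $N_2$ in hand neither half can even be stated.
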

\begin{proof}
It is easy to prove that the axiom $ C_{W}$ is valid in $  {\sf WF_{N_{2}}}$, whereas it is not valid in ${\sf WF}\widehat{\sf C} \widehat{\sf D}$.
\end{proof}


We add here also one conjecture about an extension of {\sf F}.
We refer to the axiom $ (A\rightarrow (A\rightarrow B))\rightarrow (A\rightarrow B) $ as $  R_{E} $. We proceed to prove the following theorem:
\begin{thm}
The axioms $   R $ and $  R_{E} $ are equivalent over the logic $ {\sf F} $.
\end{thm}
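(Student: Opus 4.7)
The plan is to prove the equivalence by establishing each direction separately. I will freely use the weak deduction theorem, whose extension from the implicational fragment to the full $\mathsf{F}$ is routine and unaffected by adding an axiom schema such as $R$ or $R_E$, together with rule~4 of $\mathsf{F}$ and the standard conjunction axioms from the implication-conjunction fragment.

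For $\mathsf{F}+R \vdash R_E$, I would apply the weak deduction theorem and reduce the task to showing $A\rightarrow(A\rightarrow B)\vdash A\rightarrow B$. From the theorem $A\rightarrow A$ and the hypothesis, rule~7 delivers $(A\rightarrow A)\wedge(A\rightarrow(A\rightarrow B))$. The instance $(A\rightarrow A)\wedge(A\rightarrow(A\rightarrow B))\rightarrow(A\rightarrow(A\wedge(A\rightarrow B)))$ of axiom~8 is a theorem, so weak $M\!P$ legitimately yields $A\rightarrow(A\wedge(A\rightarrow B))$. A single application of rule~4 at $n=0$ using $R = A\wedge(A\rightarrow B)\rightarrow B$ then produces $A\rightarrow B$.

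The hard part is $\mathsf{F}+R_E \vdash R$, because the naive attempt $A\wedge(A\rightarrow B)\vdash B$ would require unrestricted $M\!P$, which is unavailable. I would instead derive $R$ directly as a theorem, exploiting the substitution $A:=A\wedge(A\rightarrow B)$, $B:=B$ in $R_E$ to reduce the task to proving the antecedent
$$(A\wedge(A\rightarrow B))\rightarrow\bigl((A\wedge(A\rightarrow B))\rightarrow B\bigr).$$
This antecedent is produced by rule~4 at $n=1$ with $\bar{A_1}=A\wedge(A\rightarrow B)$, applied to the two theorems $(A\wedge(A\rightarrow B))\rightarrow((A\wedge(A\rightarrow B))\rightarrow A)$ (obtained by rule~3 from axiom~5) and $(A\wedge(A\rightarrow B))\rightarrow(A\rightarrow B)$ (axiom~6 itself). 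A final weak $M\!P$ between this antecedent and the corresponding instance of $R_E$ closes the derivation.

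The main obstacle is spotting the substitution $A:=A\wedge(A\rightarrow B)$ in $R_E$: once one sees that the resulting antecedent is syntactically reachable via axioms~5,~6 and the standard rules of $\mathsf{F}$, the hard direction collapses into a short chain of routine steps. Thereafter the only bookkeeping is verifying, at each use of $M\!P$ and rule~3, that the weak-rule restrictions are satisfied; they are, because every implication used in $M\!P$ and every formula to which rule~3 is applied is a theorem rather than an assumption-derived formula.
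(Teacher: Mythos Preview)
Your proposal is correct and, for the nontrivial direction $\mathsf{F}+R_E\vdash R$, essentially identical to the paper's argument: both hinge on substituting $A:=A\wedge(A\rightarrow B)$ in $R_E$ and then deriving the resulting antecedent $(A\wedge(A\rightarrow B))\rightarrow((A\wedge(A\rightarrow B))\rightarrow B)$ from the conjunction-elimination axioms. The paper obtains this antecedent via Lemma~\ref{4}(a) applied to $A\wedge(A\rightarrow B)\rightarrow A$ followed by a transitivity step with $A\wedge(A\rightarrow B)\rightarrow(A\rightarrow B)$, whereas you get it in a single stroke of rule~4 at $n=1$; these are minor repackagings of one another. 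For the direction $\mathsf{F}+R\vdash R_E$ the paper simply cites Corsi, so your explicit derivation via axiom~8 and $R$ adds detail the paper omits.
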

\begin{proof}
Left to right has been proved in \cite{Co}. For the other direction we proceed as follows:
\begin{enumerate}
\item $ (A\rightarrow (A\rightarrow B))\rightarrow (A\rightarrow B) $ ~~~~~~~~~~$ R_{E} $
\item $ (A\wedge (A\rightarrow B )\rightarrow (A\wedge (A\rightarrow B)\rightarrow B))\rightarrow (A\wedge (A\rightarrow B)\rightarrow B) $ ~~Substitution: 1
\item $A\wedge(A\rightarrow B)\rightarrow (A\rightarrow B)$ ~~Axiom 2
\item $A\wedge(A\rightarrow B)\rightarrow A$ ~~Axiom 1
\item $ (A\rightarrow B)\rightarrow (A\wedge(A\rightarrow B)\rightarrow B) $ ~~By 4 and Lemma \ref{4}
\item $A\wedge(A\rightarrow B)\rightarrow (A\wedge(A\rightarrow B)\rightarrow B)$~~~By 3 and 5
\item $A\wedge(A\rightarrow B)\rightarrow B$~~~By 2 and 6
\end{enumerate}

\end{proof}
On the basis of this fact one might be inclined to think that $\mathsf{F}+ R_{E} $ will characterize  the implicational fragment of $\mathsf{F} $. However, we have been unable to prove this and have taken our recourse to $ I_{Refl} $. We leave this as a small open problem.

\section{The implication-conjunction fragments of extensions of {\sf WF}}\label{icextwf}

In this section we characterize the implication-conjunction fragments of {\sf WFC}, ${\sf WFC_{W}}$ and ${\sf WFN_{b}}$, cases where the determination of the implicational fragments have so far been left open.

\subsection{The implication-conjunction fragment of ${\sf WFC}$}

The logic ${\sf WFC} $ obtained by adding the axiom $(A \rightarrow  B)\wedge (  A\rightarrow C ) \rightarrow (A\rightarrow B\wedge C) $ to {\sf WF} is known to be complete w.r.t.\ the class of all generalized neighborhoood frames that are closed under intersection. 
So, we consider the following axiom schema:
$$  C:~~ ~(A \rightarrow  B)\wedge (  A\rightarrow C ) \rightarrow (A\rightarrow B\wedge C) $$
where in the following ${\sf WF}_{[\rightarrow, \wedge]}+ C$ means that in $  C$ only implication-cojunction formulas are substituted.
We just follow the proof of the completeness of $ {\sf WF_{[\rightarrow, \wedge]}} $ and the definition of the canonical model insuring that this model is closed under intersection. We can then claim

\begin{thm}
${\sf WF}_{[\rightarrow, \wedge]}+ C={\sf WFC}_{[\rightarrow, \wedge]}$.
\end{thm}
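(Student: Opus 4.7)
The inclusion ${\sf WF}_{[\rightarrow,\wedge]}+C\subseteq{\sf WFC}_{[\rightarrow,\wedge]}$ is immediate, because every axiom and rule of the extended system is already present in ${\sf WFC}$. The plan for the nontrivial direction is to prove soundness and strong completeness of ${\sf WF}_{[\rightarrow,\wedge]}+C$ with respect to the class of generalized neighborhood frames closed under intersection, which is precisely the class characterizing ${\sf WFC}$. Soundness is routine: in any such frame, if $(||A||,||B||)\in N\!B(w)$ and $(||A||,||C||)\in N\!B(w)$, intersection-closure yields $(||A||,||B||\cap||C||)\in N\!B(w)$, and since $||B||\cap||C||=||B\wedge C||$ by the $\wedge$-clause of the semantics, axiom $C$ is validated.

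For completeness I would replay the construction used for ${\sf WF}_{[\rightarrow,\wedge]}$ with $[\rightarrow,\wedge]$-theories now taken relative to the extended system (so that each theory contains every instance of $C$ and is closed under weak $M\!P$). The Lindenbaum-style extension lemma carries over verbatim, the canonical frame is defined exactly as $\mathfrak{M}_{{\sf WF}_{[\rightarrow,\wedge]}}$, and the truth lemma is proved by induction just as before. The only new task is to verify that the canonical neighborhood relation is closed under intersection. For this, suppose $A\rightarrow B_1\in\Gamma$ and $A\rightarrow B_2\in\Gamma$. Rule 8 of the $[\rightarrow,\wedge]$-fragment gives $(A\rightarrow B_1)\wedge(A\rightarrow B_2)\in\Gamma$, and because axiom $C$ is a theorem of the system, theory-closure under weak $M\!P$ delivers $A\rightarrow B_1\wedge B_2\in\Gamma$. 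Combined with the identity $||B_1\wedge B_2||=||B_1||\cap||B_2||$ (the $\wedge$-step of the truth lemma), this shows that at the level of definable sets the canonical $N\!B$ is closed under intersection; the upward-monotone clause already in the canonical definition then makes the frame a bona fide ${\sf WFC}$ neighborhood frame.

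The main technical subtlety I expect is the one already encountered for ${\sf WF}\widehat{{\sf C}}$, ${\sf WF}\widehat{{\sf D}}$, and ${\sf WF}\widehat{{\sf C}}\widehat{{\sf D}}$: promoting closure from definable pairs to the full frame-class condition. As in those sections, I would resolve this by explicitly closing the canonical $N\!B$ under intersection and then checking, using axiom $C$ and rule 8 once more, that the enlargement introduces no new definable pair $(||A||,||B||)$ with $A\rightarrow B\notin\Gamma$, so the neighborhood lemma analogous to Lemma~\ref{xxxx} still holds. With soundness, completeness, and the canonical model being a ${\sf WFC}$-model in hand, the converse inclusion follows by the now-standard argument: if ${\sf WFC}\vdash A$ then $A$ is valid in the canonical model, hence $A$ belongs to every theory appearing there, hence ${\sf WF}_{[\rightarrow,\wedge]}+C\vdash A$.
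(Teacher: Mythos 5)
Your proposal is correct and follows essentially the same route as the paper, which proves this theorem by replaying the canonical-model completeness argument for ${\sf WF}_{[\rightarrow,\wedge]}$ and ensuring the canonical model is closed under intersection (so that it is a ${\sf WFC}$-model), exactly as you do via axiom $C$ and the conjunction rule. The extra care you take about definable versus arbitrary pairs is the same point the paper absorbs into its use of generalized neighborhood frames, so no further comment is needed.
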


\subsection{The implication-conjunction fragment of $ {\sf WFC_{W}}$}
The logic ${\sf WFC_{W}} $ obtained by adding the axiom $(A \rightarrow  B)\rightarrow (C\wedge A\rightarrow C\wedge B)   $ to {\sf WF} is known to be complete w.r.t.\ the class of all generalized neighborhoood frames that are closed under weak intersection. 
So, we consider the following axiom schema:
$$  C_{W}:~~ ~(A \rightarrow  B)\rightarrow (C\wedge A\rightarrow C\wedge B)  $$
where in the following ${\sf WF}_{[\rightarrow, \wedge]}+ C_{W}$ means that in $  C_{W}$ only implication-conjunction formulas are substituted.
We just follow the proof of the completeness of $ {\sf WF_{[\rightarrow, \wedge]}} $ and the definition of the canonical model  insuring that this model is closed under weak intersection. Similarly,  we can then claim


\begin{thm}
${\sf WF}_{[\rightarrow, \wedge]}+ C_{W}={\sf WFC_{W}}_{[\rightarrow, \wedge]}$.
\end{thm}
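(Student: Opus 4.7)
The plan is to mirror exactly the pattern used in the preceding subsection for $\mathsf{WFC}_{[\rightarrow,\wedge]}$. Soundness (left to right) is immediate: $C_W$ is an axiom of $\mathsf{WFC_W}$, and every axiom and rule of $\mathsf{WF_{[\rightarrow,\wedge]}}$ is already valid in $\mathsf{WF}$, hence in $\mathsf{WFC_W}$.

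For completeness (right to left), I would first define $(\mathsf{WF_{[\rightarrow,\wedge]}}+C_W)$-theories exactly as the $[\rightarrow,\wedge]$-theories of the previous section, with the additional requirement that all $[\rightarrow,\wedge]$-substitution instances of $C_W$ belong to the theory. The weak deduction theorem and Lindenbaum-style lemma (analogous to Theorem \ref{c}) transfer without essential change, since the proof merely threads a new, unconditional axiom through the inductive construction.

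Next, I would define the canonical model $\mathfrak{M}^{\mathsf{WFC_W}_{[\rightarrow,\wedge]}}$ exactly as $\mathfrak{M}_{\mathsf{WF}_{[\rightarrow,\wedge]}}$, just replacing $[\rightarrow,\wedge]$-theories by $(\mathsf{WF_{[\rightarrow,\wedge]}}+C_W)$-theories. The core step is verifying that this model is closed under weak intersection, i.e.\ that whenever $(||A||,||B||) \in NB(\Gamma)$, then also $(||C|| \cap ||A||, ||C|| \cap ||B||) \in NB(\Gamma)$ for every $C$. Because $||C \wedge A|| = ||C|| \cap ||A||$ and $||C \wedge B|| = ||C|| \cap ||B||$, and because $A\rightarrow B \in \Gamma$ together with the axiom $C_W \in \Gamma$ yields $C\wedge A \rightarrow C\wedge B \in \Gamma$ by the theory closure clause, the required pair is placed in $NB(\Gamma)$ directly from the definition of the canonical neighborhood. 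Hence the canonical model satisfies the frame condition characterizing $\mathsf{WFC_W}$.

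The Truth Lemma is identical to the one for $\mathsf{WF_{[\rightarrow,\wedge]}}$: the conjunction clause is settled using the axioms $A\wedge B \rightarrow A$, $A \wedge B \rightarrow B$ and rule 9, and the implication clause uses the analogue of Lemma \ref{xxxx}. Completeness then follows in the usual way: any $[\rightarrow,\wedge]$-formula unprovable in $\mathsf{WF_{[\rightarrow,\wedge]}}+C_W$ is falsifiable in the canonical model, which is a legitimate $\mathsf{WFC_W}$-neighborhood model, so the formula is not in $\mathsf{WFC_W}_{[\rightarrow,\wedge]}$. The only point requiring care is matching the canonical neighborhood exactly to the semantic notion of ``closed under weak intersection'' fixed in Appendix B; once that bookkeeping is in place, the argument is a routine adaptation of the preceding subsection.
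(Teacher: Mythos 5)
Your proposal follows exactly the route the paper takes (and only sketches): soundness is immediate, and completeness is obtained by rerunning the $\mathsf{WF_{[\rightarrow,\wedge]}}$ canonical-model construction with $C_W$-instances added to the theories and checking that the canonical neighborhood function is closed under weak intersection via $\|C\wedge A\|=\|C\|\cap\|A\|$, with the caveat (which you correctly flag) that the closure condition is read in the sense of generalized neighborhood frames. So the proposal is correct and matches the paper's argument.
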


\subsection{The implication-conjunction fragment of $  {\sf WFN_{b}}$}
The logic $ {\sf WFN_{b}} $ obtained by adding the axiom $(A\rightarrow B)\leftrightarrow (A \rightarrow A \wedge B)$  to {\sf WF} is known to be complete w.r.t.\ the class of all generalized neighborhoood frames that satisfy the $  N_{b}$-condition. 
So, we consider the following axiom schema:
$$  N_{b}:~~~ (A\rightarrow B)\leftrightarrow (A \rightarrow A \wedge B),~~~~~~~~~~~$$
where in the following ${\sf WF}_{[\rightarrow, \wedge]}+N_{b}$ means that in $  N_{b}$ only implication-conjunction formulas are substituted.
We just follow the proof of the completeness of $ {\sf WF_{[\rightarrow, \wedge]}} $ and the definition of the canonical model insuring that this model satisfies the $N_{b}$-condtion. Similarly,  we can then claim

\begin{thm}
${\sf WF}_{[\rightarrow, \wedge]}+ N_{b}={\sf WFN_{b}}_{[\rightarrow, \wedge]}$.
\end{thm}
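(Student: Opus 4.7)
The plan is to follow the template established for $\mathsf{WFC}_{[\rightarrow,\wedge]}$ and $\mathsf{WFC_W}_{[\rightarrow,\wedge]}$, building a canonical model whose neighborhood function satisfies the $N_b$-condition by virtue of the axiom schema $N_b$ being present in every theory.

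First, I would set up $\vdash_{{\sf WF}_{[\rightarrow,\wedge]}+N_b}$ exactly as for ${\sf WF}_{[\rightarrow,\wedge]}$, treating $N_b$ as an additional axiom schema (in which only $[\rightarrow,\wedge]$-formulas are substituted). Since $N_b$ is a schema and not a rule, the weak deduction theorem and the Lindenbaum-style lemma for $\mathsf{WF}_{[\rightarrow,\wedge]}$-theories lift without modification to theories of the extended system. The notion of $[\rightarrow,\wedge]$-theory is also unchanged except that every theory must contain all instances of $N_b$. I would then define the canonical model $\mathfrak{M}_{{\sf WFN_b}_{[\rightarrow,\wedge]}}$ in the same way as $\mathfrak{M}_{{\sf WF}_{[\rightarrow,\wedge]}}$, using the set of all $[\rightarrow,\wedge]$-theories of ${\sf WF}_{[\rightarrow,\wedge]}+N_b$ as worlds. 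The truth lemma and the analogue of Lemma \ref{1kk} go through by the same arguments as before; in particular, $\|A \wedge B\| = \|A\| \cap \|B\|$ follows from axioms 6, 7 and rule 8.

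The main step is verifying that the canonical model satisfies the $N_b$-condition, namely that for every $\Gamma$ and every $X, Y \in \mathcal{P}(W)$, $(X,Y) \in N\!B(\Gamma)$ iff $(X, X\cap Y) \in N\!B(\Gamma)$. I would argue by cases on how the pair enters $N\!B(\Gamma)$. If $X \subseteq Y$, then $X \subseteq X\cap Y$ holds iff $X\subseteq Y$, so both pairs lie in $N\!B(\Gamma)$ by the subset clause. If $(X,Y)=(\|A\|,\|B\|)$ with $A\rightarrow B \in \Gamma$, then $N_b$ and the fact that $\Gamma$ is a theory give $A\rightarrow A\wedge B \in \Gamma$, so $(\|A\|,\|A\wedge B\|)=(X, X\cap Y) \in N\!B(\Gamma)$; the converse direction is symmetric using the other half of the $N_b$-biconditional.

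The hard part will be handling the backward direction when $(X, X\cap Y) \in N\!B(\Gamma)$ arises from the clause $X = \|A\|$ and $X\cap Y = \|C\|$ for some $A\rightarrow C \in \Gamma$ without $Y$ being definable. This is routine only because the canonical model's $N\!B$ is generated solely by the two clauses in its definition, so the only way $(X, X\cap Y)$ enters is either through $X \subseteq X\cap Y$ (handled above) or through a formula pair $(\|A\|, \|B\|)$ with $\|B\| = X\cap Y$; then $A\rightarrow B \in \Gamma$ together with $N_b$ yields $A \rightarrow A\wedge B\in\Gamma$, and since $\|A\wedge B\|=\|A\|\cap\|B\|=X\cap(X\cap Y)=X\cap Y=\|B\|$, the pair $(\|A\|,\|B\|)=(X,Y')$ for some $Y'\supseteq X\cap Y$ does the work. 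Once the $N_b$-condition is verified, soundness of $N_b$ over $N_b$-frames gives the remaining direction and the theorem follows, exactly as in the preceding subsections.
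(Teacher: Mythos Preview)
Your overall plan matches the paper's terse argument, but there is a real gap in your verification of the $N_b$-condition on the canonical frame. The problem is exactly the case you flag as ``the hard part'': when $(X,X\cap Y)\in N\!B(\Gamma)$ enters via a formula pair $(\|A\|,\|B\|)$ with $\|A\|=X$ and $\|B\|=X\cap Y$, the set $Y$ need not be definable, and your final sentence does not produce $(X,Y)\in N\!B(\Gamma)$ --- it only recovers $(\|A\|,\|B\|)=(X,X\cap Y)$, which is the pair you started from. Concretely, take $p\rightarrow q\in\Gamma$, set $X=\|p\|$, and let $Y=(\|p\|\cap\|q\|)\cup S$ for any non-definable $S\subseteq W\setminus\|p\|$. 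Then $X\cap Y=\|p\wedge q\|$, so $(X,X\cap Y)\in N\!B(\Gamma)$, yet $(X,Y)\notin N\!B(\Gamma)$ since $Y$ is not a truth set and in general $\|p\|\not\subseteq Y$.

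The remedy implicit in the paper's phrase ``insuring that this model satisfies the $N_b$-condition'' is not to verify the condition for the bare canonical $N\!B$, but to \emph{close} it: put $N\!B'(\Gamma)=\{(X,Y):(X,X\cap Y)\in N\!B(\Gamma)\}$ and re-establish the analogue of Lemma~\ref{xxxx} for $N\!B'$. That step is clean: $(\|A\|,\|B\|)\in N\!B'(\Gamma)$ iff $(\|A\|,\|A\wedge B\|)\in N\!B(\Gamma)$ iff $A\rightarrow A\wedge B\in\Gamma$ iff, by the $N_b$ schema present in every theory, $A\rightarrow B\in\Gamma$. The resulting frame is easily seen to satisfy both the $N_b$-condition and the omniscience requirement on $g$, so the truth lemma and hence the fragment result go through as before.
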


\section{Conclusion} In this paper we categorized principles valid for implication in {\sf IPC} by studying the $[\rightarrow]$-fragments of subintuitionistic logics, both ones with Kripke-models as well as ones with neighborhood models.

\label{references}
\

\begin{figure}[ht]
\begin{center}
$$\begin{tikzpicture}[line cap=round,line join=round,>=triangle 45,x=1cm,y=1cm]
\draw (-5.52,6.96) node[anchor=north west] {{\sf  BPC}};
\draw (-2.52,7.5) node[anchor=north west] {{\sf IPC}};
\draw (-5.34,4.9) node[anchor=north west] {{\sf  FT}};
\draw (-3.22,4) node[anchor=north west] {${\sf F}$};
\draw (-3.34,6.08) node[anchor=north west] {{\sf  FP}};
\draw (-0.42,4.96) node[anchor=north west] {{\sf  FR}};
\draw (-0.52,6.9) node[anchor=north west] {{\sf  FPR}};
\draw (-2.5,5.5) node[anchor=north west] {{\sf FTR}};
\draw [line width=1pt] (-5,6.34)-- (-4.96,4.82);
\draw [line width=1pt] (-4.74,4.36)-- (-3.3,3.76);
\draw [line width=1pt] (-3,5.5)-- (-2.96,4);
\draw [line width=1pt] (-4.56,6.3)-- (-3.4,5.86);
\draw [line width=1pt] (-2.56,5.8)-- (-0.42,6.3);
\draw [line width=1pt] (-4.5,6.64)-- (-2.8,7);
\draw [line width=1pt] (-1.56,6.86)-- (-0.64,6.62);
\draw [line width=1pt] (-0.04,6.16)-- (-0.02,4.98);
\draw [line width=1pt] (-2.58,3.78)-- (-0.36,4.42);
\draw [line width=1pt,dash pattern=on 2pt off 3pt] (-4.56,4.68)-- (-2.48,5.26);
\draw [line width=1pt,dash pattern=on 2pt off 3pt] (-1.56,5.12)-- (-0.56,4.76);
\draw [line width=1pt,dash pattern=on 2pt off 3pt] (-2.02,6.92)-- (-2,5.5);

\draw (-6.72,3.46) node[anchor=north west] {${\sf  WF{\textsf{$\widehat{\sf{C}}$}}C }$};
\draw (-3.22,4) node[anchor=north west] {{\sf F}};
\draw (-6.54,1.4) node[anchor=north west] {${\sf  WF  {\textsf{$\widehat{\sf{C}}$}}}$};
\draw (-4.34,0.5) node[anchor=north west] {{\sf WF}};
\draw (-4.44,2.58) node[anchor=north west] {{\sf  WFC}};
\draw (-1.42,1.46) node[anchor=north west] {${\sf  WF_{N}}$};
\draw (-1.52,3.4) node[anchor=north west] {${\sf  WF_{N}C}$};
\draw (-3.5,2.1) node[anchor=north west] {${\sf WF_{N_{2}}}$};
\draw [line width=1pt] (-6,2.84)-- (-5.96,1.32);
\draw [line width=1pt] (-5.74,0.86)-- (-4.3,0.26);
\draw [line width=1pt] (-4,2)-- (-3.96,0.5);
\draw [line width=1pt] (-5.56,2.8)-- (-4.4,2.36);
\draw [line width=1pt] (-3.56,2.3)-- (-1.42,2.8);
\draw [line width=1pt] (-5.5,3.14)-- (-3.8,3.5);
\draw [line width=1pt] (-2.56,3.36)-- (-1.64,3.12);
\draw [line width=1pt] (-1.04,2.66)-- (-1.02,1.48);
\draw [line width=1pt] (-3.58,0.28)-- (-1.36,0.92);
\draw [line width=1pt,dash pattern=on 2pt off 3pt] (-5.56,1.18)-- (-3.48,1.76);
\draw [line width=1pt,dash pattern=on 2pt off 3pt] (-2.56,1.62)-- (-1.56,1.26);
\draw [line width=1pt,dash pattern=on 2pt off 3pt] (-3.02,3.42)-- (-3,2);
\end{tikzpicture}$$
\end{center}
\caption{Lattice of some subintuitionistic logics  }
\label{fig:wfc}
\end{figure}

\begin{figure}[ht]
\begin{center}
$$\begin{tikzpicture}[line cap=round,line join=round,>=triangle 45,x=1cm,y=1cm]
\draw (-5.52,6.96) node[anchor=north west] {$ P_{T}$};
\draw (-2.52,7.5) node[anchor=north west] {${\sf IPC_{[\rightarrow]}}$};
\draw (-5.34,4.9) node[anchor=north west] {$ T $};
\draw (-3.22,4) node[anchor=north west] {${\sf F_{[\rightarrow]}}$};
\draw (-3.34,6.08) node[anchor=north west] {$ P $};
\draw (-0.42,4.96) node[anchor=north west] {$ I_{Refl}, R_{E} $?};
\draw (-0.52,6.9) node[anchor=north west] {$ P+I_{Refl} $};
\draw (-2.5,5.5) node[anchor=north west] {$ T+I_{Refl} $};
\draw [line width=1pt] (-5,6.34)-- (-4.96,4.82);
\draw [line width=1pt] (-4.74,4.36)-- (-3.3,3.76);
\draw [line width=1pt] (-3,5.5)-- (-2.96,4);
\draw [line width=1pt] (-4.56,6.3)-- (-3.4,5.86);
\draw [line width=1pt] (-2.56,5.8)-- (-0.42,6.3);
\draw [line width=1pt] (-4.5,6.64)-- (-2.8,7);
\draw [line width=1pt] (-1.56,6.86)-- (-0.64,6.62);
\draw [line width=1pt] (-0.04,6.16)-- (-0.02,4.98);
\draw [line width=1pt] (-2.58,3.78)-- (-0.36,4.42);
\draw [line width=1pt,dash pattern=on 2pt off 3pt] (-4.56,4.68)-- (-2.48,5.26);
\draw [line width=1pt,dash pattern=on 2pt off 3pt] (-1.56,5.12)-- (-0.56,4.76);
\draw [line width=1pt,dash pattern=on 2pt off 3pt] (-2.02,6.92)-- (-2,5.5);

\draw (-6.22,3.46) node[anchor=north west] {?};
\draw (-3.22,4) node[anchor=north west] {${\sf F_{[\rightarrow]}}$};
\draw (-6.44,1.4) node[anchor=north west] {$I_{L}$};
\draw (-4.34,0.5) node[anchor=north west] {${\sf WF_{[\rightarrow]}}$};
\draw (-4.24,2.58) node[anchor=north west] {?};
\draw (-1.42,1.46) node[anchor=north west] {?};
\draw (-1.52,3.4) node[anchor=north west] {?};
\draw (-3.22,2.1) node[anchor=north west] {?};
\draw [line width=1pt] (-6,2.84)-- (-5.96,1.32);
\draw [line width=1pt] (-5.74,0.86)-- (-4.3,0.26);
\draw [line width=1pt] (-4,2)-- (-3.96,0.5);
\draw [line width=1pt] (-5.56,2.8)-- (-4.4,2.36);
\draw [line width=1pt] (-3.56,2.3)-- (-1.42,2.8);
\draw [line width=1pt] (-5.5,3.14)-- (-3.8,3.5);
\draw [line width=1pt] (-2.56,3.36)-- (-1.64,3.12);
\draw [line width=1pt] (-1.04,2.66)-- (-1.02,1.48);
\draw [line width=1pt] (-3.58,0.28)-- (-1.36,0.92);
\draw [line width=1pt,dash pattern=on 2pt off 3pt] (-5.56,1.18)-- (-3.48,1.76);
\draw [line width=1pt,dash pattern=on 2pt off 3pt] (-2.56,1.62)-- (-1.56,1.26);
\draw [line width=1pt,dash pattern=on 2pt off 3pt] (-3.02,3.42)-- (-3,2);
\end{tikzpicture}$$
\end{center}
\scriptsize \caption{ Implicational fragments of some subintuitionistic logics.}
\vspace{-0.3cm} 
\begin{center}
\scriptsize 
$P_{T} : A \rightarrow (B \rightarrow A)$, ~ $P = p \rightarrow (T \rightarrow p)$, ~ $ T:(A\rightarrow B)\rightarrow ((B\rightarrow C)\rightarrow (A\rightarrow C))$,  ~$R_{E}: (A\rightarrow (A\rightarrow B))\rightarrow (A\rightarrow B) $, \quad  $ I_{Refl}: \dfrac{\bar{A_{n}}\rightarrow B~~~~ \bar{A_{n}}\rightarrow (B\rightarrow C)}{\bar{A_{n}}\rightarrow C}~~~(n\geq 0)$, \qquad 
$ I_{L}:~~~~ \frac{A\rightarrow B}{(C\rightarrow A)\rightarrow (C\rightarrow B)}$
\end{center}
\label{fig:i}
\end{figure}
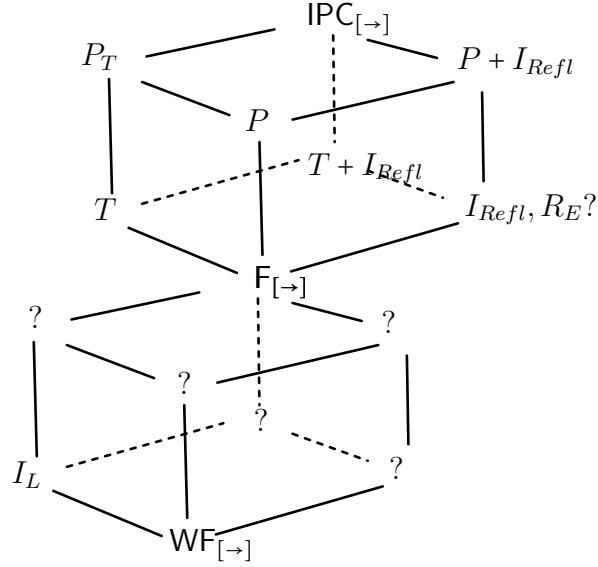
\appendix

\section{Subintuitionistic Logics with Kripke Semantics}\label{kripke}
The results in this section are stated without proofs. For detailed proofs, see \cite{Dic} and \cite{DF2}.

\begin{dfn}
A \textbf{rooted subintuitionistic Kripke frame} is a triple $ \langle W, g, R \rangle $. R is a binary relation on W, $ g \in W $, the \textbf{root}, is \textbf{omniscient}, i.e.\ $ g R w $ for each $ w \in W$. A \textbf{rooted subintuitionistic Kripke model} is a quadruple $ \langle W, g, R, V \rangle $ with $ V:P\rightarrow 2^{W} $  a valuation function on the set of propositional variables $P$. The binary \textbf{forcing} relation $ \Vdash $ is defined on $ w \in W $ as follows.
\begin{enumerate}
\item $ w \Vdash p~~~~~~~~\Leftrightarrow ~ ~w \in V(p) $, for any $ p \in P $,
\item  $w \Vdash A\wedge B  ~~\Leftrightarrow~ ~w \Vdash A  \ and \  w \Vdash B$,
\item  $w \Vdash A\vee B  ~~\Leftrightarrow~~ w \Vdash A  \ or \ w \Vdash B$,
\item  $w \Vdash A\rightarrow B  ~\Leftrightarrow$ ~ for each v with $w R v$, if $v \Vdash A$ then $v \Vdash B$.
\end{enumerate}

\noindent $\M\Vdash  A$ if for all $ w \in W ,$ $\M, w\Vdash A$. If all models force $ A $, we write $ \Vdash\! A $ and call $ A $ \textbf{valid}.
\end{dfn}
\begin{dfn}
{\sf F} is the logic given by the following axioms and rules,
\begin{enumerate}
\item $ A\rightarrow A\vee B ~~~~~~~~~~~~~~\textit{7}.~A\wedge (B\vee C)\rightarrow (A\wedge B) \vee(A\wedge C)$
\item $ B\rightarrow A\vee B ~~~~~~~~~~~~~~\textit{8}.~(A\rightarrow B) \wedge (B\rightarrow C)\rightarrow (A\rightarrow C)$
\item $ A\wedge B \rightarrow A ~~~~~~~~~~~~~~\textit{9}. ~(A\rightarrow B) \wedge (A\rightarrow C) \rightarrow(A \rightarrow B\wedge C) $
\item $ A\wedge B \rightarrow B~~~~~~~~~~~~~~\textit{10}.~A\rightarrow A$
\item $\frac{A~~B}{A\wedge B}~~~~~~~~~~~~~~~~~~~~~\textit{11}. ~(A\rightarrow C) \wedge (B\rightarrow C)\rightarrow (A\vee B\rightarrow C) $
\item $ \frac{A ~~A\rightarrow B}{B} ~~~~~~~~~~~~~~~~~\textit{12} .~ \frac{A}{B\rightarrow A}\,\,\, \mbox{(\textbf{a fortiori} or \textbf{weakening})}$
\end{enumerate}
\end{dfn}
\begin{thm} 
{\sf F} is sound and strongly complete with respect to the class of rooted subintuitionistic Kripke frames.
\end{thm}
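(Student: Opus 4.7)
The plan is to prove soundness by a routine induction on derivations and to establish strong completeness through a Henkin-style canonical model built from prime $\mathsf{F}$-theories, closely paralleling the constructions used in Section~2 for the $[\rightarrow]$- and $[\rightarrow,\wedge]$-fragments of $\mathsf{F}$.

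Soundness amounts to verifying that each of the axioms 1--11 is forced at every world of every rooted subintuitionistic Kripke model; the Boolean-like axioms 1--4, 7, and 10 are immediate from the forcing clauses, while the implicational axioms 8, 9, and 11 follow by direct set-theoretic reasoning about $R$-successors. Modus ponens and conjunction introduction trivially preserve forcing at a world, and the only rule requiring a brief argument is a fortiori: if $A$ is valid, then at any world $w$ every $R$-successor $v$ of $w$ satisfies $v\Vdash A$, whence $w\Vdash B\rightarrow A$ for any $B$.

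For strong completeness I would define an $\mathsf{F}$-theory as a set $\Delta$ containing all theorems of $\mathsf{F}$, closed under modus ponens, under conjunction introduction, and under provable implication (if $\vdash_{\mathsf{F}}A\rightarrow B$ and $A\in\Delta$ then $B\in\Delta$), and moreover \textbf{prime} in the sense that $A\vee B\in\Delta$ implies $A\in\Delta$ or $B\in\Delta$; primeness is essential because the language now includes $\vee$. A Lindenbaum-style lemma mirroring Lemma~\ref{g}, enumerating formulas with infinitely many repetitions, shows that whenever $\Sigma\nvdash_{\mathsf{F}}A$ there is a prime $\mathsf{F}$-theory $\Pi\supseteq\Sigma$ with $A\notin\Pi$; the distributivity axiom 7 combined with axiom 11 is what permits primeness to be maintained during the saturation step. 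The canonical model $\mathfrak{M}_{\mathsf{F}}=\langle W_{\mathsf{F}},g,R,V\rangle$ then has $W_{\mathsf{F}}$ equal to the set of all prime $\mathsf{F}$-theories, $g$ equal to the set of theorems of $\mathsf{F}$ (prime by the disjunction property, itself a consequence of applying the saturation lemma to the empty set of premises), $\Gamma R\Delta$ iff for all $A\rightarrow B\in\Gamma$, $A\in\Delta$ implies $B\in\Delta$, and $V(p)=\{\Gamma:p\in\Gamma\}$; omniscience of $g$ is immediate since any $\Delta\in W_{\mathsf{F}}$ is closed under provable implication.

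The Truth Lemma $\Gamma\Vdash C$ iff $C\in\Gamma$ is then proved by induction on $C$: the atomic, $\wedge$, and $\vee$ clauses are immediate from closure and primeness. I expect the implication case to be the principal obstacle, exactly as in Lemma~\ref{D1}. Assuming $\Gamma\Vdash A\rightarrow B$ but $A\rightarrow B\notin\Gamma$, one forms $\Sigma=\{E\mid A\rightarrow E\in\Gamma\}$, verifies via axioms 8, 9, and 11 that $\Sigma$ is an $\mathsf{F}$-theory with $\Gamma R\Sigma$ and $A\in\Sigma$, and then saturates $\Sigma$ to a prime theory $\Sigma^{*}$ avoiding $B$ while showing that $\Gamma R\Sigma^{*}$ persists. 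Careful bookkeeping of closure and primeness through these successive closures is the main technical hurdle; once it is carried out, $\Sigma^{*}\Vdash A$ and $\Sigma^{*}\nVdash B$ contradict $\Gamma\Vdash A\rightarrow B$, completing the Truth Lemma and, by evaluating at $\Pi$, the strong completeness theorem.
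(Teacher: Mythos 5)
The paper does not actually prove this theorem: Appendix~A states it without proof and refers to \cite{Dic} and \cite{DF2}, so the only in-paper material to compare against is the canonical-model machinery of Section~2, which your completeness outline (prime theories, Lindenbaum saturation using axioms 7 and 11, the set $\Sigma=\{E\mid A\rightarrow E\in\Gamma\}$ in the implication case of the Truth Lemma) does mirror correctly in spirit.

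There is, however, a genuine error in your soundness argument. You claim that ``modus ponens \ldots trivially preserve[s] forcing at a world.'' It does not: in a rooted subintuitionistic Kripke model the frame need not be reflexive, so from $w\Vdash A$ and $w\Vdash A\rightarrow B$ one cannot conclude $w\Vdash B$, since the clause for $\rightarrow$ only constrains the $R$-successors of $w$ and $w$ need not be among them. (This is exactly the phenomenon the paper isolates with the axiom $R$ and the logic $\mathsf{FR}$.) What is true is that $M\!P$ preserves \emph{validity}, and the argument for that must go through the omniscient root: if $A\rightarrow B$ holds at $g$ and $gRw$ for all $w$, then $B$ holds wherever $A$ does. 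More importantly, for \emph{strong} completeness you must first fix the consequence relations $\Gamma\vdash_{\mathsf F}A$ and $\Gamma\Vdash_{\mathsf F}A$; with local semantic consequence and unrestricted $M\!P$ (and unrestricted a fortiori) the soundness direction is simply false, e.g.\ $A,\,A\rightarrow B\Vdash B$ fails at an irreflexive world. The paper's own treatment of the fragments handles this by restricting $M\!P$ to weak $M\!P$ and allowing rule~3 only under no assumptions, and the same restrictions are needed here. Your proposal never states these restrictions, so as written the theorem you would be proving is not the one intended. Secondly, in the Truth Lemma you correctly identify the hard step---extending $\Sigma=\{E\mid A\rightarrow E\in\Gamma\}$ to a prime theory $\Sigma^{*}$ with $B\notin\Sigma^{*}$ while preserving $\Gamma R\,\Sigma^{*}$---but you only gesture at it; the saturation must be performed with respect to a deduction relation relativized to $\Gamma$ (whenever $C$ is added and $C\rightarrow D\in\Gamma$, $D$ must be added too), and showing that this relativized closure still avoids $B$ is the actual content of the step. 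As it stands the proposal is an acceptable outline of the completeness half but its soundness half contains a false claim that must be repaired via the omniscient root and the weak-$M\!P$ formulation of $\vdash_{\mathsf F}$.
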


\begin{dfn}
A rooted subintuitionistic Kripke frame $ \mathfrak{F}=\langle W, g, R \rangle  $ is \textbf{reflexive} if $ wRw $ holds for all $ w \in W$, and \textbf{transitive} if $ w Rv $ and $ vRs $ imply $ wRs $ for all $ w, v, s \in W $.
\end{dfn}

\begin{lem}\quad
\begin{enumerate}
\item[(a)]  The formula $A\wedge (A\rightarrow B)\rightarrow B $ characterizes the class of reflexive Kripke frames.
\item[(b)]   The formula $(A\rightarrow B)\rightarrow (C\rightarrow (A\rightarrow B) )$ characterizes the class of transitive Kripke frames.
\item[(c)]  The formula $(A\rightarrow B)\rightarrow ((B\rightarrow C)\rightarrow (A\rightarrow C) )$ characterizes the class of transitive
Kripke frames.
\end{enumerate}
\end{lem}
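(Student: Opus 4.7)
The plan is to prove each of the three items as a frame correspondence result: validity of the displayed formula on a rooted subintuitionistic Kripke frame is equivalent to the stated frame property. For every item I would argue both directions semantically, with the soundness direction a direct chase through the forcing clauses using the frame property, and the correspondence direction handled contrapositively by exhibiting a small refuting valuation on the given frame.

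For (a), soundness is immediate: given reflexivity, if $w\Vdash A\wedge(A\rightarrow B)$ then in particular $w\Vdash A$ and $wRw$, so the implication clause at $w$ yields $w\Vdash B$. For the converse, suppose some world $w$ has $\neg(wRw)$ and put $V(p)=\{w\}$, $V(q)=\emptyset$. Then $w\Vdash p$ and $w\Vdash p\rightarrow q$ vacuously, since the only $p$-world is $w$ itself which is not $R$-accessible from $w$; but $w\not\Vdash q$, so the instance $p\wedge(p\rightarrow q)\rightarrow q$ of the scheme fails at $w$, falsifying it on the frame.

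For (b) and (c), soundness in both cases is obtained by transporting an assumed implication $A\rightarrow B$ at $w$ along a chain $wRvRu$: transitivity gives $wRu$, so $u\Vdash A$ forces $u\Vdash B$, and the remaining implication clauses then chain to the goal. For the converse of either, I would pick witnesses to non-transitivity, i.e.\ worlds $w,v,u$ with $wRv$, $vRu$ and $\neg(wRu)$, and define a small valuation using only the variables that occur in the formula. In (b), setting $V(p)=\{u\}$, $V(q)=\emptyset$, $V(r)=\{v\}$ makes $w\Vdash p\rightarrow q$ vacuous but $v\not\Vdash p\rightarrow q$ because of $u$, hence $w\not\Vdash r\rightarrow(p\rightarrow q)$. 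In (c), setting $V(p)=\{u\}$ and $V(q)=V(r)=\emptyset$ makes $w\Vdash p\rightarrow q$ and $v\Vdash q\rightarrow r$ vacuous, while $v\not\Vdash p\rightarrow r$ witnesses $w\not\Vdash(q\rightarrow r)\rightarrow(p\rightarrow r)$, falsifying the instance at $w$.

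The only subtlety I foresee is the presence of the omniscient root $g$, which is $R$-related to every world and so makes many formulas trivially true at $g$ itself. Because validity is defined pointwise at every world of every model, it suffices to falsify the formula at the chosen internal world $w$; moreover, in each construction $g$ actually ends up falsifying the antecedent of the refuted instance (e.g.\ $g\not\Vdash p\rightarrow q$ because $gRu$ while $u\Vdash p$ and $u\not\Vdash q$), so the formula holds vacuously at $g$, which is perfectly consistent with our only needing failure somewhere in the model.
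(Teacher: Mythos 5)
The paper itself states this lemma without proof (Appendix A explicitly defers to \cite{Dic} and \cite{DF2}), so there is no in-paper argument to compare against. Your overall plan is the standard correspondence-theoretic one and is the right route: the soundness halves are correct (proving that \emph{every} world forcing the antecedent forces the consequent is more than enough, since the outer implication then holds at every world), and each of your three valuations does yield a model on the given frame in which the relevant instance fails.

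There is, however, one genuine error in where you locate the failure. In these frames every occurrence of $\rightarrow$, including the outermost one, is evaluated through $R$: $t\Vdash X\rightarrow Y$ iff every $R$-successor of $t$ that forces $X$ forces $Y$. Exhibiting a world $w$ that forces the antecedent but not the consequent therefore does \emph{not} falsify the instance at $w$ itself --- in case (a), for example, $w$ forces $p\wedge(p\rightarrow q)\rightarrow q$ vacuously, since no $R$-successor of $w$ forces $p$ --- it falsifies the instance at every world that \emph{sees} $w$, and in particular at the omniscient root $g$, because $gRw$ always holds. Your closing paragraph asserts the exact opposite: that the formula fails at the internal world $w$ and ``holds vacuously at $g$''. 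That is the material-implication reading of the main connective, not the strict one used in the paper's semantics. The countermodels are rescued precisely by the feature you tried to argue around: $g$ is the witness to non-validity. So the constructions stand, but the justification of the refutation step should be rewritten to evaluate the top-level implication at $g$ (or at any $R$-predecessor of $w$) rather than at $w$.
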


\begin{dfn}
The valuation $ V $ is called \textbf{persistent} if $  V(p) $ is upward-closed for all propositional variables
$ p, $
that is: if $w \in V(p)$ and $wRw'$, then $w'\in V(p)$.
\end{dfn}
\begin{lem}
\label{ch}
The formula $ p\rightarrow  (A\rightarrow p )$ with $ p $ a propositional variable characterizes the class of rooted subintuitionistic Kripke models $\M=\langle W, g, R, V\rangle $ in which $V$ is persistent.
\end{lem}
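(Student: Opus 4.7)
The plan is to prove the characterization by showing each direction separately, working directly from the forcing clause for $\rightarrow$ given in the definition of rooted subintuitionistic Kripke models.

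For the soundness direction (persistence implies validity), I would fix an arbitrary persistent model $\M=\langle W, g, R, V\rangle$, a world $w\in W$, and unfold the satisfaction clause twice. Assuming $wRv$ with $v\Vdash p$, we get $v\in V(p)$, and then for any $v'$ with $vRv'$ and $v'\Vdash A$, persistence of $V$ on the variable $p$ gives $v'\in V(p)$, hence $v'\Vdash p$. This yields $v\Vdash A\rightarrow p$ and therefore $w\Vdash p\rightarrow(A\rightarrow p)$, valid in $\M$.

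For the completeness direction (validity implies persistence), I would use the fact that the schema $p\rightarrow(A\rightarrow p)$ allows substituting any formula for $A$. The key trick is to instantiate $A$ by a formula that is forced at every world of the model; a safe universal choice is $A := p\rightarrow p$, which forces trivially at each point by the forcing clause for $\rightarrow$. Then, to prove persistence, suppose $w\in V(p)$ and $wRw'$. Since $g$ is omniscient, $gRw$; by the assumed validity, $g\Vdash p\rightarrow((p\rightarrow p)\rightarrow p)$, so from $w\Vdash p$ we get $w\Vdash (p\rightarrow p)\rightarrow p$. Applying this with $v'=w'$ and using $w'\Vdash p\rightarrow p$, we conclude $w'\Vdash p$, i.e.\ $w'\in V(p)$, which is exactly persistence.

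There is no real obstacle here: the only point that requires any thought is the choice of the witness formula $A$ in the completeness direction. The candidate $p\rightarrow p$ works because validity of $A\rightarrow A$ is essentially built into the forcing clause, so no additional axiom or rule is needed. I would emphasize in the write-up that it is crucial that $A$ is a schematic formula variable (so that the instantiation is legitimate) and that $p$ remains a propositional variable throughout, since the statement is about persistence of $V$ on atomic propositions.
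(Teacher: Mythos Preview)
Your argument is correct. Note, however, that the paper does not actually prove this lemma: the appendix in which it appears explicitly states that results there are given without proof, with references to \cite{Dic} and \cite{DF2} for details. So there is no proof in the present paper to compare against.

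On its own merits your write-up is the standard argument and goes through without issue. The soundness direction is a straightforward two-step unfolding of the forcing clause. In the completeness direction, your use of the omniscient root $g$ is exactly right: omniscience is what lets you pass from $w\Vdash p$ to $w\Vdash (p\rightarrow p)\rightarrow p$ via the instance forced at $g$, and this is precisely the purpose the root serves in rooted subintuitionistic frames. Your choice of witness $A:=p\rightarrow p$ is the natural one; the paper's remark immediately following the lemma, that the schema can be replaced by $p\rightarrow(\top\rightarrow p)$, reflects the same idea of instantiating $A$ by something universally forced.
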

It is clear that $p\rightarrow  (A\rightarrow p ) $ can be replaced by $p\rightarrow  (\top\rightarrow p ) $. In the remainder of this section we will be interested in the following axiom schemas.

\begin{enumerate}
\item[] ($ R$) ~ $A\wedge (A\rightarrow B)\rightarrow B $
\item[] ($P$) ~ $p \rightarrow (\top \rightarrow p) $  ~~ ($ p $ a propositional letter)
\item[] ($ T_{1}$)  ~ $(A\rightarrow B)\rightarrow (C \rightarrow (A\rightarrow B))$
\item[] ($ T_{2}$)  ~ $(A\rightarrow B)\rightarrow ((B\rightarrow C)\rightarrow (A\rightarrow C))$
\item[] ($P_{T}$) ~ $A \rightarrow (B \rightarrow A) $
\end{enumerate}
\begin{lem} \label{FTPR}\quad
\label{mmm}
\begin{enumerate}
\item[(a)] If~ ${\sf FR}\subseteq$~\textit{L}, then the canonical model of logic \textit{L} is reflexive.
\item[(b)] If~ ${\sf FP}\subseteq$~\textit{L}, then the canonical model of logic \textit{L} is persistent.
\item[(c)] If~ ${\sf FT_{1}}\subseteq$~\textit{L}, then the canonical model of logic \textit{L} is transitive.
\item[(c)] If~ ${\sf FT_{2}}\subseteq$~\textit{L}, then the canonical model of logic \textit{L} is transitive.
\item[(d)] If~ ${\sf FP_{T}}\subseteq$~\textit{L}, then the canonical model of logic \textit{L} is transitive and persistent.
\end{enumerate}
\end{lem}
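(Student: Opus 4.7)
The plan is to verify each clause by checking directly that the canonical relation $R$ (respectively the canonical valuation $V$) inherits the desired frame property from the axioms that are assumed to lie in every $L$-theory. Throughout I shall use two standard facts about theories of any logic $L\supseteq\mathsf{F}$: theories are closed under conjunction (by rule 5) and under provable implication (hence under full $M\!P$ using axioms as premises), and every theorem of $L$ belongs to every $L$-theory.

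For clause (a), suppose $\mathsf{FR}\subseteq L$ and let $\Gamma$ be any $L$-theory in the canonical model. To show $\Gamma R\,\Gamma$, assume $A\rightarrow B\in\Gamma$ and $A\in\Gamma$. By closure under $\wedge$-introduction, $A\wedge(A\rightarrow B)\in\Gamma$. Since the axiom $R$ is a theorem of $L$, the formula $A\wedge(A\rightarrow B)\rightarrow B$ is in $\Gamma$; closure of $\Gamma$ under provable consequence then yields $B\in\Gamma$, as required.

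For clause (b), let $\Gamma R\,\Delta$ and suppose $p\in\Gamma$ (so $\Gamma\in V(p)$). The axiom $P$ gives $p\rightarrow(\top\rightarrow p)\in\Gamma$, hence $\top\rightarrow p\in\Gamma$. Since $\top$ (say, any fixed theorem such as $A\rightarrow A$) is in $\Delta$, the defining condition of $R$ yields $p\in\Delta$, showing persistence. For clauses (c$_1$) and (c$_2$) I use the same trick in two forms. Assume $\Gamma R\,\Delta$ and $\Delta R\,\Sigma$; I must show $\Gamma R\,\Sigma$, i.e., that $A\rightarrow B\in\Gamma$ and $A\in\Sigma$ imply $B\in\Sigma$. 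Under $T_1$: substituting $C:=A\rightarrow A$ in $(A\rightarrow B)\rightarrow(C\rightarrow(A\rightarrow B))$ and applying $M\!P$ with $A\rightarrow B\in\Gamma$ gives $(A\rightarrow A)\rightarrow(A\rightarrow B)\in\Gamma$; since $A\rightarrow A\in\Delta$ and $\Gamma R\,\Delta$, we obtain $A\rightarrow B\in\Delta$, and finally $A\in\Sigma$, $\Delta R\,\Sigma$ deliver $B\in\Sigma$. Under $T_2$: substitute $C:=B$ in $(A\rightarrow B)\rightarrow((B\rightarrow C)\rightarrow(A\rightarrow C))$, use $M\!P$ with $A\rightarrow B\in\Gamma$ to get $(B\rightarrow B)\rightarrow(A\rightarrow B)\in\Gamma$, and proceed exactly as before via $B\rightarrow B\in\Delta$.

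For clause (d), $P_T$ specialises to $(A\rightarrow B)\rightarrow(C\rightarrow(A\rightarrow B))$, which is $T_1$; so the transitivity argument of (c$_1$) applies verbatim. Persistence is immediate from the $p$-instance $p\rightarrow(B\rightarrow p)$ of $P_T$ and the argument of (b). The main obstacle I anticipate is clerical rather than conceptual: one must be careful that (i) the substitution step needed to produce the right instance of $T_1$ or $T_2$ really lives in $\Gamma$, which is fine since the axiom schema is a theorem and every theorem is in every theory, and (ii) $M\!P$ is being invoked only in its legitimate form for theories, i.e.\ on elements of $\Gamma$ together with theorems; for full logics like {\sf FR}, {\sf FP}, {\sf FT}$_i$, {\sf FP}$_T$ themselves this is automatic because full $M\!P$ is a rule of the logic. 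Beyond these checks the proofs are routine, and, as the paper notes in the general remark following Theorem~\ref{FF}, the canonical-model arguments for further logics $L$ run in parallel.
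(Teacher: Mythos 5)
Your verification is correct, and it is the standard canonical-model argument: the paper itself states this lemma without proof (deferring to \cite{Dic, DF2}), and your direct check — using closure of $L$-theories under adjunction and under provable implication, plus the instances $A\wedge(A\rightarrow B)\rightarrow B$, $p\rightarrow(\top\rightarrow p)$, and the $C:=A\rightarrow A$ (resp.\ $C:=B$) instances of $T_1$ (resp.\ $T_2$), with $P_T$ subsuming both $T_1$ and the persistence instance — is exactly the argument those references give.
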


Since ${\sf FT_{1}}$ and ${\sf FT_{2}}$ did turn out to be equivalent we will refer to both by {\sf FT}.
Visser's logic {\sf BPC} is equivalent to {\sf FTP} or $\mathsf{ FP_T}$  in our terminology.

\section{Subintuitionistic Logics with Neighborhood Semantics}\label{neighbor}

The results in this section are stated without proofs. For detailed proofs, see \cite{FD} and \cite{Dic4}.
\begin{dfn}
\label{11}
A triple $ \mathfrak{F}=\langle W, g, N\!B\rangle $ is called a \textbf{ Neighborhood Frame} of subintuitionistic logic if $W$ is a non-empty set and $N\!B$ is a neighborhood function from $W$ into $ \mathcal{P}((\mathcal{P}(W))^{2} )$ such that
\begin{enumerate}
\item   $\forall w \in W , ~ \forall X, Y \in \mathcal{P}(W), ~(X\subseteq Y~\Rightarrow~(X, Y) \in N\!B(w)); $
\item $ N\!B(g)= \left\lbrace (X, Y) \in (\mathcal{P}(W))^{2}~|~X\subseteq Y\right\rbrace.$
\end{enumerate}
Here $ g $ is called \textbf{omniscient} (i.e.\ has the property 2).
\end{dfn}
We use the existence of omniscient worlds in the proofs of soundness and of characterization of properties of frames.
\begin{dfn}
A \textbf{Neighborhood Model} of subintuitionistic logic is a tuple $\mathfrak{M}= \langle W, g, N\!B, V\rangle $, where $ \langle W, g, N\!B\rangle $ is a neighborhood frame of subintuitionistic logic and $ V: P\rightarrow 2^{W} $ a valuation function on the set of propositional variables $P$.
\end{dfn}
\begin{dfn}
\label{truth}
\textbf{(Truth in a Neighborhood Model)} Let $ M=\langle W, g, N\!B,V \rangle $ be a model and $ w\in W$. Truth of a propositional formula in a world $w$ is defined inductively as follows.
\begin{enumerate}

\item ~$\mathfrak{M},w \Vdash p~~~~~~ ~~\Leftrightarrow~ w \in V(p)$;

\item ~$ \mathfrak{M},w \Vdash A\wedge B~~\Leftrightarrow~ \mathfrak{M},w \Vdash A ~{\rm and}~ \mathfrak{M},w \Vdash B$;

\item ~$ \mathfrak{M},w \Vdash A\vee B~~\Leftrightarrow ~\mathfrak{M},w \Vdash A ~{\rm or}~ \mathfrak{M},w \Vdash B$;

\item ~$ \mathfrak{M},w \Vdash A\rightarrow B~\Leftrightarrow ~  \left(  A^{\mathfrak{M}}, B^{\mathfrak{M}}\right)   \in N\!B(w)$;

\item ~$ \mathfrak{M},w \nVdash \perp,$
\end{enumerate}
where $ A^{\mathfrak{M}} $ denotes the truth set of $ A $.
A formula A is \textbf{true in a model} $\mathfrak{M}{=}\,\langle W, g, N\!B, V\rangle $, $\mathfrak{M}\,{\Vdash}\,A$ if for all $ w \in W, ~M, w \Vdash A $ and if all models force A, we write $ \Vdash A $ and call A \textbf{valid}. A formula A  is \textbf{valid on a frame}  $ \mathfrak{F}=\langle W, g, N\!B\rangle $, $ \mathfrak{F}\Vdash A $ if $ A $ is true in every model based on that frame.
\end{dfn}

\begin{dfn}
{\sf WF} is the logic given by the following axioms and rules,
\begin{enumerate}
\item $ A\rightarrow A\vee B ~~~~~~~~~~~~~~\textit{9}.~A\rightarrow A$

\item $ B\rightarrow A\vee B ~~~~~~~~~~~~~ \textit{10}.~\frac{A ~~A\rightarrow B}{B}$

\item $ A\wedge B \rightarrow A ~~~~~~~~~~~~~\textit{11}.~ \frac{A~~B}{A\wedge B}$

\item $ A\wedge B \rightarrow B~~~~~~~~~~~~~\textit{12} .~ \frac{A}{B\rightarrow A}$

\item $ \frac{A\rightarrow B~~A\rightarrow C}{A\rightarrow B\wedge C}~~~~~~~~~~~~~\textit{13} .~ \frac{A\rightarrow B~~B\rightarrow C}{A\rightarrow C}$

\item $ \frac{A\rightarrow C~~B\rightarrow C}{A\vee B \rightarrow  C}~~~~~~~~~~~~~\textit{14} .~\frac{A\leftrightarrow B~~C\leftrightarrow D}{(A\rightarrow C)\leftrightarrow (B\rightarrow D)} $

\item $A\wedge (B\vee C)\rightarrow (A\wedge B) \vee(A\wedge C)  ~~~~~~~~~~~~~~~~~~$

\item $ \perp\rightarrow A $
\end{enumerate}
\end{dfn}
The rules are to be applied in such a way that, if the formulas above the line are theorems of {\sf WF}, then the formula below the line is a theorem as well except for Rule 10 which functions as weak $M\!P$.

\begin{thm}
The logic {\sf WF} is sound and strongly complete with respect to the class of neighbourhood frames.
\end{thm}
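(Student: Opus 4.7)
The target statement is soundness and strong completeness of the full logic $\mathsf{WF}$ (with all connectives $\wedge,\vee,\rightarrow,\perp$) with respect to the class of neighbourhood frames defined in Appendix~\ref{neighbor}. The plan is to mirror, at the level of the full language, the canonical-model construction that was executed for $\mathsf{WF_{[\rightarrow]}}$ in Section~\ref{fwf}; the arguments for $\vee,\wedge,\perp$ are routine, and the novelty is that Rule 14 (the equivalence rule) is what forces the neighbourhood function to be well-defined on truth sets.

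For soundness, I would check each axiom schema and rule individually: the disjunction and conjunction axioms, distributivity, and $\perp\rightarrow A$ are immediate by the truth clauses of Definition~\ref{truth}; the two inference rules for $\rightarrow$ (Rules 5, 6, 13) and the equivalence rule (Rule 14) exploit exactly the superset closure of $N\!B$ together with the omniscience of $g$, ensuring that whenever the premises are valid on every neighbourhood model, the conclusion is too; Rule 12 (a fortiori) uses omniscience of $g$ together with superset closure in an essential way.

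For strong completeness, I would define a $\mathsf{WF}$-theory as a set $\Delta$ closed under $\vdash A\rightarrow B\Rightarrow(A\in\Delta\Rightarrow B\in\Delta)$, containing all theorems, and closed under conjunction (if $A,B\in\Delta$ then $A\wedge B\in\Delta$) and the disjunction property as dictated by the axioms (one gets this for free from the first closure clause using $A\rightarrow A\vee B$). I would then prove the weak deduction theorem in the standard way, and the Lindenbaum-type lemma that $\Sigma\nvdash D$ implies the existence of a theory $\Pi\supseteq\Sigma$ with $D\notin\Pi$. The canonical model $\mathfrak{M}^{\mathsf{WF}}=\langle W,g,N\!B,V\rangle$ has $W$ the set of all $\mathsf{WF}$-theories, $g$ the set of theorems, $V(p)=\|p\|=\{\Gamma\mid p\in\Gamma\}$, and
\[
N\!B(\Gamma)=\{(\|A\|,\|B\|)\mid A\rightarrow B\in\Gamma\}\cup\{(X,Y)\mid X\subseteq Y\}.
\]
The omniscience of $g$ follows from the a fortiori rule applied to theorems, and superset closure is built in.

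The key lemma (analog of Lemma~\ref{1kk}) is $\|C\|\subseteq\|D\|$ iff $\vdash C\rightarrow D$: left-to-right by Lindenbaum plus the weak deduction theorem, right-to-left by the closure condition on theories. From this I would derive the neighbourhood-characterisation lemma (analog of Lemma~\ref{xxxx}): $(\|A\|,\|B\|)\in N\!B(\Gamma)$ iff $A\rightarrow B\in\Gamma$. This is the step where the equivalence rule really earns its keep: if $(\|A\|,\|B\|)\in N\!B(\Gamma)$ arises because $\|A\|=\|A'\|$ and $\|B\|=\|B'\|$ with $A'\rightarrow B'\in\Gamma$, then by the key lemma $\vdash A\leftrightarrow A'$ and $\vdash B\leftrightarrow B'$, and Rule 14 yields $\vdash(A'\rightarrow B')\leftrightarrow(A\rightarrow B)$, so $A\rightarrow B\in\Gamma$. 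The remaining case is $\|A\|\subseteq\|B\|$, handled exactly as in Lemma~\ref{xxxx}. With this in hand the Truth Lemma goes through by induction on $C$: the clauses for $p,\wedge,\vee,\perp$ are immediate from the definitions and the closure properties of theories, while the $\rightarrow$-clause reduces, using the neighbourhood-characterisation lemma, to the identity $\Gamma\Vdash A\rightarrow B$ iff $A\rightarrow B\in\Gamma$. Completeness then follows in the usual way: if $\Sigma\nvdash A$, pick $\Pi\supseteq\Sigma$ with $A\notin\Pi$, and the Truth Lemma gives $\mathfrak{M}^{\mathsf{WF}},\Pi\Vdash\Sigma$ but $\mathfrak{M}^{\mathsf{WF}},\Pi\nVdash A$.

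The main obstacle I anticipate is precisely the neighbourhood-characterisation lemma, because a pair $(\|A\|,\|B\|)$ may land in $N\!B(\Gamma)$ for two quite different reasons (membership of some $A'\rightarrow B'$ with the same truth sets, or mere superset), and the rule of equivalent replacement (Rule 14) is the only syntactic tool available to extract $A\rightarrow B\in\Gamma$ from the first case. Everything else, including the handling of $\wedge,\vee,\perp$ and the construction of the canonical neighbourhood function, is essentially bookkeeping on top of the already established $[\rightarrow]$-case.
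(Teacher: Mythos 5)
The paper itself does not prove this theorem: it appears in Appendix~B, which explicitly states its results without proof and refers to the earlier papers \cite{FD} and \cite{Dic4}. Your overall strategy --- a canonical model whose worlds are theories, with $N\!B(\Gamma)$ generated by the pairs $(\|A\|,\|B\|)$ for $A\rightarrow B\in\Gamma$ together with all subset pairs, the key lemma $\|C\|\subseteq\|D\|$ iff $\vdash C\rightarrow D$, and the use of the equivalence rule (Rule 14) to close the gap in the neighbourhood-characterisation lemma --- is exactly the method of \cite{FD} and of Section~\ref{fwf} of this paper, and your identification of Rule 14 as the crux is correct.

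There is, however, a genuine gap in your treatment of the connectives you dismiss as routine. For the Truth Lemma clause for $\vee$ you need the right-to-left direction: from $A\vee B\in\Gamma$ you must conclude $\Gamma\Vdash A\vee B$, i.e.\ $A\in\Gamma$ or $B\in\Gamma$. Theories as you define them (closed under provable implication, under conjunction, and containing all theorems) need not be \emph{prime}, and the closure clause via $A\rightarrow A\vee B$ only gives the easy direction. Likewise, the clause for $\perp$ requires every world of the canonical model to be a \emph{consistent} theory, since no world forces $\perp$ but the set of all formulas satisfies your closure conditions. So the canonical model must be built from consistent prime theories, and your Lindenbaum-type lemma must be strengthened to produce a prime $\Pi\supseteq\Sigma$ with $D\notin\Pi$ (the standard construction: when a disjunction is added at some stage, also add one of its disjuncts, verifying via the axiom $(A\rightarrow C)\wedge(B\rightarrow C)\rightarrow(A\vee B\rightarrow C)$, or Rule 6, that this can be done without deriving $D$). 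This primeness step is precisely the point where the full-language argument is harder than the $[\rightarrow]$- and $[\rightarrow,\wedge]$-fragment arguments carried out in the body of the paper, where no disjunction is present and the issue never arises; it is not bookkeeping and should not be elided.
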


As usual, we can extend {\sf WF} by adding various rules and axiom schemes.
In this paper, we will consider the following rules:
$$ \frac{A\rightarrow B\vee C~~~~~~C\rightarrow A\vee D~~~~~~A\wedge C\wedge  D\rightarrow B~~~~~~ A\wedge C\wedge B\rightarrow D}{(A\rightarrow B)\leftrightarrow (C\rightarrow D)}~~~~ N$$
$$~~~~~ \frac{C\rightarrow A\vee D~~~~~~~ A\wedge C\wedge B\rightarrow D}{(A\rightarrow B)\rightarrow (C\rightarrow D)}~~~~~~~~~~ N_{2}$$
and the axiom:
$$(A\rightarrow B)\wedge (B\rightarrow C)\rightarrow(A\rightarrow  C)~~~~~~~~~~I$$
$$(A\rightarrow B)\wedge (A\rightarrow C)\rightarrow(A\rightarrow B\wedge C)  ~~~~C$$
$$(A\rightarrow C)\wedge (B\rightarrow C)\rightarrow(A\vee B \rightarrow C)  ~~~~D$$
$$(A\rightarrow B\wedge C)\rightarrow (A\rightarrow B)\wedge (A\rightarrow C) ~~~~\widehat{C} $$
$$ (A\vee B \rightarrow C)\rightarrow (A\rightarrow C)\wedge (B\rightarrow C) ~~~~\widehat{D}$$
$$(A\rightarrow B)\leftrightarrow (A \rightarrow A \wedge B)~~~~~~~~~~~~~~~~~~N_{b} $$
$$ (A \rightarrow  B)\rightarrow (C\wedge A\rightarrow C\wedge B)~~~~~~~~~~~~~C_{W}$$
If $ \Gamma\subseteq\lbrace N, N_{2},   I,  C, D, \widehat{C} , \widehat{D}, N_{b}, C_{W}\rbrace $, we will write ${\sf WF\Gamma }$ for the logic obtained from {\sf WF} by adding to  {\sf WF}  the schemas in $ \Gamma $ as new axioms.

In order to give soundness and completeness results for extensions of {\sf WF} with respect to (classes of) neighbourhood models, we introduce the following definition.

\begin{dfn}
For every neighbourhood frame  $  \mathfrak{F} $, we list some relevant properties as follows (here $ X, Y, Z, X', Y' \in \mathcal{P}(W) $, $ w \in W $
and we denote the complement of $ X $ by $\overline{X} $):
\begin{enumerate}
\item[$ \bullet $] \fl \, is closed under \textbf{intersection} if and only if for all $ w \in W $, if $ (X, Y)\in N\!B(w) $, $ (X, Z)\in N\!B(w) $ then $ (X,  Y\cap Z) \in N\!B(w) $.
\item[$ \bullet $] \fl \, is closed under \textbf{union} if and only if for all $ w \in W $, if $ (X, Y)\in N\!B(w) $, $ (Z, Y)\in N\!B(w) $ then $ (X\cup Z,  Y) \in N\!B(w) $.
\item[$ \bullet $] \fl \, ps \textbf{transitive} if and only if for all $ w \in W $, if $ (X, Y)\in N\!B(w) $, $ (Y, Z)\in N\!B(w) $ then $ (X,  Z) \in N\!B(w) $,
\item[$ \bullet $] \fl \, is closed under \textbf{upset} if and only if for all $ w \in W $, if $ (X, Y)\in N\!B(w) $ and $ Y\subseteq Z $ then $ (X, Z) \in N\!B(w) $,
\item[$ \bullet $] \fl \, is closed under \textbf{downset} if and only if for all $ w \in W $, if $ (X, Y)\in N\!B(w) $ and $ Z\subseteq X $ then $ (Z, Y) \in N\!B(w) $,
\item[$ \bullet $] \fl \, is closed under \textbf{equivalence} if and only if for all $ w \in W $, if $ (X, Y)\in N\!B(w) $ and $ \overline{X} \cup Y =\overline{X^{'}} \cup Y^{'}$ then $ (X^{'}, Y^{'}) \in N\!B(w) $,
\item[$ \bullet $] \fl \, is closed under \textbf{superset equivalence} if and only if for all $ w \in W $, if $ (X, Y)\in N\!B(w) $ and $ \overline{X} \cup Y \subseteq \overline{X^{'}} \cup Y^{'}$ then $ (X^{'}, Y^{'}) \in N\!B(w) $,
\item[$ \bullet $]  $ \fl $ satisfies the $ {\sf N_{b} }$-condition if and only if for all $ w \in W $,
$(X, Y)\in N\!B(w)$  iff $ (X, X\cap Y) \in N\!B(w)$,
\item[$ \bullet $]  $ \fl $ is closed under \textbf{weak intersection} if and only if for all $ w \in W $,
If $(X, Y)\in N\!B(w)$ then for all $ Z$, $ (X \cap Z, Y\cap Z) \in N\!B(w)$.
\end{enumerate}
\end{dfn}

\begin{prop}\label{4bprop}
We have the following correspondence results between formulas and the
properties of the neighbourhood function defined above:
\begin{enumerate}
\item[$ \bullet $] Axiom $C$ corresponds to closure under intersection;
\item[$ \bullet $] Axiom $D$ corresponds to closure under union;
\item[$ \bullet $] Axiom $I$ corresponds to transitivity;
\item[$ \bullet $] Axiom $\widehat{ C}$ corresponds to closure under upset;
\item[$ \bullet $] Axiom $\widehat{D}$ corresponds to closure under downset;
\item[$ \bullet $] Axiom $N_{b}$  corresponds to satisfying the $ {\sf N_{b} }$-condition;
\item[$ \bullet $] Axiom $C_{W}$  corresponds to closure under weak intersection.
\item[$ \bullet $] Rule $N$ corresponds to closure under equivalence;
\item[$ \bullet $] Rule $ N_{2 }$ corresponds to closure under superset equivalence.
\end{enumerate}
\end{prop}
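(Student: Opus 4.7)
My plan is to establish each of the nine correspondences by two directions: the \emph{soundness direction}, that the frame property implies validity of the axiom or rule, and the \emph{characterization direction}, that validity of the axiom or rule on the frame implies the property.

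For the seven axioms $C$, $D$, $I$, $\widehat{C}$, $\widehat{D}$, $N_b$, $C_W$ the soundness direction is a direct unfolding of the truth condition for $\rightarrow$ at an arbitrary world $w$: the Boolean combinator in the consequent of the axiom translates into the corresponding closure operation on $N\!B(w)$. For instance, for $C$: if $w\Vdash (A\rightarrow B)\wedge(A\rightarrow C)$ then $(A^{\mathfrak{M}},B^{\mathfrak{M}}),(A^{\mathfrak{M}},C^{\mathfrak{M}})\in N\!B(w)$, and closure under intersection gives $(A^{\mathfrak{M}},(B\wedge C)^{\mathfrak{M}})\in N\!B(w)$, i.e., $w\Vdash A\rightarrow (B\wedge C)$. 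The remaining axioms are handled analogously, matching $\widehat{C}$ to upset closure, $\widehat{D}$ to downset closure, $C_W$ to weak intersection, and so on. For the characterization direction I will use the standard valuation trick: given arbitrary $X,Y,Z\in\mathcal{P}(W)$ witnessing the preconditions of the property at a world $w$ (for example $(X,Y),(X,Z)\in N\!B(w)$ in the case of $C$), choose pairwise distinct propositional variables and let $V$ assign them the truth sets $X,Y,Z$; validity of the axiom at $w$ then reads off directly the required membership in $N\!B(w)$ (for $C$, $(X, Y\cap Z)\in N\!B(w)$). The case of $N_b$ is essentially the same but uses both directions of the biconditional, and in the case of $C_W$ one picks an extra atom for the parameter set $Z$.

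The rules $N$ and $N_2$ are the main delicate point. The key preparatory observation is a short Boolean computation: the four side conditions of $N$, viewed at the level of truth sets as $A\subseteq B\cup C$, $C\subseteq A\cup D$, $A\cap C\cap D\subseteq B$ and $A\cap C\cap B\subseteq D$, are jointly equivalent to the single equality $\overline{A}\cup B=\overline{C}\cup D$; analogously, the side conditions of $N_2$ are jointly equivalent to the inclusion $\overline{A}\cup B\subseteq\overline{C}\cup D$. Granted this, soundness of $N$ on a frame closed under equivalence is immediate: validity of the premises in a model forces the equality on truth sets, and closure under equivalence then gives $(A^{\mathfrak{M}},B^{\mathfrak{M}})\in N\!B(w)$ iff $(C^{\mathfrak{M}},D^{\mathfrak{M}})\in N\!B(w)$ at every $w$. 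For the characterization direction, given $(X,Y)\in N\!B(w)$ and $\overline{X}\cup Y=\overline{X'}\cup Y'$, take fresh atoms $p_1,p_2,p_3,p_4$ with truth sets $X,Y,X',Y'$; by the Boolean computation the four side conditions of $N$ instantiated with these atoms correspond to set inclusions that do hold, so by clause 1 of Definition~\ref{11} the premise formulas are valid in the resulting model; applying $N$ yields $w\Vdash (p_1\rightarrow p_2)\leftrightarrow (p_3\rightarrow p_4)$, whence $(X',Y')\in N\!B(w)$. The treatment of $N_2$ is identical with inclusion in place of equality.

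The main obstacle, as indicated, is the Boolean equivalence underpinning the $N$ and $N_2$ correspondences; once that small lemma is in place, each of the remaining cases reduces to a routine unfolding of truth conditions on one side and to the atomic-valuation trick on the other.
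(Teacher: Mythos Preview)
The paper does not prove this proposition; Appendix~\ref{neighbor} explicitly states the results without proof and refers to \cite{FD} and \cite{Dic4} for details. Your plan is the standard one and matches what those references carry out, and your Boolean computation underpinning the $N$ and $N_2$ correspondences is correct and is exactly the point on which those two cases turn.

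One point deserves care when you write the argument out. In the characterization direction you say that ``validity of the axiom at $w$ then reads off directly the required membership in $N\!B(w)$''. But validity of an implication $E\rightarrow F$ at an arbitrary world $w$ only gives $(E^{\mathfrak M},F^{\mathfrak M})\in N\!B(w)$, which by itself does not let you pass from $w\in E^{\mathfrak M}$ to $w\in F^{\mathfrak M}$. What you actually need is validity at the omniscient world $g$: by clause~2 of Definition~\ref{11} this yields $E^{\mathfrak M}\subseteq F^{\mathfrak M}$, and since your chosen $w$ lies in the antecedent's truth set you then obtain the desired membership in $N\!B(w)$. The same remark applies to your handling of the rules: after applying $N$ you should pass through $g$ to get $(p_1\rightarrow p_2)^{\mathfrak M}=(p_3\rightarrow p_4)^{\mathfrak M}$ before concluding $(X',Y')\in N\!B(w)$. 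The paper itself flags this role of $g$ just after Definition~\ref{11}, so it is worth making it explicit.
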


\begin{thm}\label{4b}
If $ \Gamma\subseteq\lbrace  I,  C, D, \widehat{C}, \widehat{D}, N_{b}, C_{W}, N, N_{2} \rbrace $, then  ${\sf WF\Gamma }$ is sound and strongly complete
with respect to the class of general neighborhood frames with all the
properties defined by the axioms and rules in $ \Gamma $.
\end{thm}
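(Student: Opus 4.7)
The plan splits into soundness and strong completeness. Soundness is immediate from Proposition~\ref{4bprop}: each axiom and each of the rules $N$, $N_{2}$ listed has already been shown to correspond to the closure property attached to it, so every axiom of $\mathsf{WF}\Gamma$ is valid on any frame whose $NB$ satisfies every property indexed by $\Gamma$, and every rule preserves that validity; hence every theorem of $\mathsf{WF}\Gamma$ is valid on the intended class of frames.

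For strong completeness I would imitate the canonical-model construction for $\mathsf{WF}$ given in Section~\ref{fwf}. Take $\mathfrak{M}^{\mathsf{WF}\Gamma}=\langle W_{\mathsf{WF}\Gamma},g,NB_{\mathsf{WF}\Gamma},V\rangle$ with $W_{\mathsf{WF}\Gamma}$ the set of $\mathsf{WF}\Gamma$-theories (defined in the obvious way, adding to the closure conditions the clauses for the axioms and rules in $\Gamma$), $g$ the set of theorems of $\mathsf{WF}\Gamma$, $V(p)=\{\Delta:p\in\Delta\}$, and put $(||A||,||B||)\in NB_{\mathsf{WF}\Gamma}(\Delta)$ whenever $A\to B\in\Delta$, together with every subset pair $(X,Y)$ with $X\subseteq Y$. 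The analogue of the Lindenbaum Theorem~\ref{1c} transfers without change, as do the analogues of Lemmas~\ref{1kk} and~\ref{xxxx}, once $\vdash_{\mathsf{WF}}$ is replaced by $\vdash_{\mathsf{WF}\Gamma}$; in particular $(||A||,||B||)\in NB_{\mathsf{WF}\Gamma}(\Delta)$ iff $A\to B\in\Delta$.

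The decisive step is to check that the canonical frame sits inside the intended class, i.e.\ that $NB_{\mathsf{WF}\Gamma}$ satisfies every property that Proposition~\ref{4bprop} attaches to an axiom or rule of $\Gamma$. For each item I would run a uniform recipe: convert every set-theoretic inclusion appearing in the hypothesis of the property into a derivable implication belonging to $\Delta$ via Lemma~\ref{1kk}, then invoke the axiom or rule from $\Gamma$ together with Rule~11 (conjunction introduction) and weak $M\!P$ inside $\Delta$ to produce the formula whose pair is the required element of $NB_{\mathsf{WF}\Gamma}(\Delta)$. For example, axiom $I$ gives transitivity on formula-induced pairs through the chain $A\to B,\,B\to C\in\Delta\Rightarrow (A\to B)\wedge(B\to C)\in\Delta\Rightarrow A\to C\in\Delta$; mixed cases, where one pair comes from a subset inclusion and the other from a formula, reduce to this by turning the inclusion into a derivable implication. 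The axioms $C$, $D$, $\widehat{C}$, $\widehat{D}$, $N_{b}$, $C_{W}$ and the rules $N$, $N_{2}$ are all treated analogously; for the rules one observes that derivability of their displayed premises inside $\Delta$ yields derivability of the conclusion inside $\Delta$.

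The truth lemma then goes through by induction on formulas as in Lemma~\ref{uuu}, the $\to$-clause relying on the analogue of Lemma~\ref{xxxx}; combined with the Lindenbaum lemma this yields completeness, since if $\Sigma\nvdash_{\mathsf{WF}\Gamma}A$ one picks a theory $\Pi\supseteq\Sigma$ with $A\notin\Pi$, and then $\mathfrak{M}^{\mathsf{WF}\Gamma},\Pi\Vdash\Sigma$ while $\mathfrak{M}^{\mathsf{WF}\Gamma},\Pi\nVdash A$. The main obstacle is the mixed-pair bookkeeping in the property-verification step, particularly for the rules $N$ and $N_{2}$, whose hypotheses couple several formulas through set-theoretic union, intersection and complement; Lemma~\ref{1kk} is the consistent tool that reduces every such pattern to the pure formula-induced case in which the axiom or rule of $\Gamma$ applies directly inside $\Delta$.
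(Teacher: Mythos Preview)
The paper does not actually prove Theorem~\ref{4b}: it sits in Appendix~\ref{neighbor}, whose opening sentence announces that ``the results in this section are stated without proofs'' and points to \cite{FD} and \cite{Dic4} for the details. So there is no in-paper argument to compare against; your outline does follow the canonical-model strategy that the paper itself uses for the fragment versions in Sections~\ref{fwf}--\ref{extwf}, and that is indeed the method of the cited sources.

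One step in your sketch is not quite right as written. You say that in a mixed case, where one pair comes from the subset clause and the other from a formula, Lemma~\ref{1kk} ``turns the inclusion into a derivable implication.'' But Lemma~\ref{1kk} applies only to inclusions between \emph{definable} sets $||C||\subseteq||D||$, whereas the closure conditions in the appendix quantify over arbitrary $X,Y,Z\in\mathcal{P}(W)$, and a pair entering $NB(\Delta)$ via the bare subset clause may involve a non-definable set. Concretely, with $\widehat{C}\in\Gamma$: if $A\to B\in\Delta$ and $||B||\subseteq Z$ for an undefinable $Z$, your canonical $NB$ as described need not contain $(||A||,Z)$, so closure under upset fails. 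The cited papers handle this either by reading ``general neighborhood frame'' as a genuine general-frame structure (closure required only for admissible, i.e.\ definable, sets) or by closing the canonical $NB$ under the required operations and then re-verifying the analogue of Lemma~\ref{xxxx}; you should make explicit which of these routes you are taking rather than rely on Lemma~\ref{1kk} to cover the non-definable case.
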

Finally, we draw attention to a new fact which cannot be found in our earlier papers concerning the logics $\mathsf{WF_N}$ and $\mathsf{WF_{N_2}}$ which are in first instance not defined in connection with the neighborhood properties of proposition~\ref{4bprop} but in connection with the more standard unary $N$-neghborhood-frames of modal logic (see~\cite{Dic4}).
\begin{obs} We introduce a new rule\footnote{The new rule $ N'$, denoted by $N'$ was first suggested to us by Narbe Aboolian. We thank him for this suggestion.} $N'$ as an alternative to the rule $N$:
$$ \frac{A\rightarrow B\vee C~~~~~C\rightarrow A\vee D~~~~~A\wedge  D\rightarrow B~~~~~ C\wedge B\rightarrow D}{(A\rightarrow B)\leftrightarrow (C\rightarrow D)}~~~~N'$$
It is straightforward to show that $N'$ can be derived from $N$. Furthermore, it can also be easily demonstrated that the addition of $N'$ to {\sf WF} suffices to establish the completeness of the resulting logic $\mathsf{WF_N}$ with respect to the class of general neighborhood frames that are closed under equivalence.

Similarly, we have recently identified an alternative rule, denoted by $ N'_{2} $, which can replace $N_{2} $:
$$~~~~~ \frac{C\rightarrow A\vee D~~~~~~~ C\wedge B\rightarrow D}{(A\rightarrow B)\rightarrow (C\rightarrow D)}~~~~~~~~~~N'_{2}$$
It is straightforward to demonstrate that 
$  N'_{2} $ can be derived from $ N_{2} $. Furthermore, it can also be easily demonstrated that the addition of $ N'_{2} $ to {\sf WF} suffices to establish the completeness of the resulting logic $\mathsf{WF_{N_2}}$ with respect to the class of general neighborhood frames that are closed under superset equivalence.

Thus, the new rules $N'$  and $  N'_{2} $ provide simplified ways to analyze and extend {\sf WF} while preserving key properties such as soundness and completeness.
\end{obs}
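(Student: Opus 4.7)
The observation contains four claims, which I would prove in two parallel pairs: first the derivability of $N'$ from $N$ together with the completeness of $\mathsf{WF}+N'$ for frames closed under equivalence, then the analogous pair for $N'_2$.

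For the derivability of $N'$ from $N$, note that the first two premises of $N'$ coincide with those of $N$, so I only need to recover the $N$-premises $A\wedge C\wedge D\rightarrow B$ and $A\wedge C\wedge B\rightarrow D$ from the (stronger) $N'$-premises $A\wedge D\rightarrow B$ and $C\wedge B\rightarrow D$. The implications $A\wedge C\wedge D\rightarrow A\wedge D$ and $A\wedge C\wedge B\rightarrow C\wedge B$ are $\mathsf{WF}$-theorems (by axioms 3, 4 and rule 5), so rule 13 (transitivity) furnishes the missing $N$-premises, and $N$ then delivers $(A\rightarrow B)\leftrightarrow(C\rightarrow D)$. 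The derivation of $N'_2$ from $N_2$ is completely analogous: only the premise $A\wedge C\wedge B\rightarrow D$ needs recovering, and the same weakening trick applies.

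For the completeness of $\mathsf{WF}+N'$ with respect to frames closed under equivalence, I would build the canonical model along the lines of the $\mathsf{WF}$-construction already in the paper, but define the neighborhood function by
\[
N\!B(\Gamma)=\{(X,Y):X\subseteq Y\}\cup\{(X,Y)\mid\exists A,B\,(A\rightarrow B\in\Gamma\text{ and }\overline{||A||}\cup||B||=\overline{X}\cup Y)\},
\]
so that closure under equivalence holds by construction. The heart of the argument is the truth-lemma direction $(||A||,||B||)\in N\!B(\Gamma)\Rightarrow A\rightarrow B\in\Gamma$. The nontrivial case occurs when some $C,D$ satisfy $C\rightarrow D\in\Gamma$ and $\overline{||C||}\cup||D||=\overline{||A||}\cup||B||$. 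Unpacking this set equality pointwise yields the four inclusions $||A||\subseteq||B||\cup||C||$, $||C||\subseteq||A||\cup||D||$, $||A||\cap||D||\subseteq||B||$ and $||C||\cap||B||\subseteq||D||$, which by Lemma~\ref{1kk} lift to provability of $A\rightarrow B\vee C$, $C\rightarrow A\vee D$, $A\wedge D\rightarrow B$ and $C\wedge B\rightarrow D$, i.e.\ to precisely the four premises of $N'$. The rule $N'$ then yields $(A\rightarrow B)\leftrightarrow(C\rightarrow D)$ as a theorem, whence $A\rightarrow B\in\Gamma$ by the theory-closure of $\Gamma$.

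The case of $N'_2$ is parallel: one replaces the set equality in the definition of $N\!B(\Gamma)$ by the one-sided inclusion $\overline{||C||}\cup||D||\subseteq\overline{X}\cup Y$, so that closure under superset equivalence is immediate. The crucial truth-lemma step then proceeds by unpacking $||A||\setminus||B||\subseteq||C||\setminus||D||$ into the two inclusions $||A||\subseteq||C||\cup||B||$ and $||A||\cap||D||\subseteq||B||$, which after renaming are precisely the premises of $N'_2$ needed to derive $(C\rightarrow D)\rightarrow(A\rightarrow B)$, and hence $A\rightarrow B\in\Gamma$. The main obstacle throughout is to pinpoint the exact match between the set-theoretic content of the closure condition and the syntactic shape of the weaker rule's premises; once this correspondence is seen, the rest is a routine adaptation of the $\mathsf{WF}$-completeness proof via Lemma~\ref{1kk}.
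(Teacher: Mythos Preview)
Your proposal is correct and supplies precisely the details the paper omits: the paper gives no proof of this observation beyond the words ``straightforward'' and ``easily demonstrated,'' so there is nothing substantive to compare against. Your argument---weakening the $N'$-premises to recover the $N$-premises for the derivability direction, and for completeness modifying the canonical neighborhood function so that the closure condition is built in, then reading off the $N'$ (resp.\ $N'_2$) premises from the set-theoretic unpacking of $\overline{||C||}\cup||D||=\overline{||A||}\cup||B||$ (resp.\ $\subseteq$)---is exactly the expected route and is sound.

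One small remark: the lemma you cite as Lemma~\ref{1kk} is stated in the paper only for the $[\rightarrow]$-fragment, whereas here you need its full-language analogue (since the $N'$-premises contain $\vee$ and $\wedge$). That analogue holds by the same argument and is implicit in the standard $\mathsf{WF}$-completeness proof referred to in the appendix, so this is only a matter of citation, not substance.
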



\begin{thebibliography}{4}


\bibitem{Celani2}
S. Celani, R. Jansana, A Closer Look at Some Subintuitionistic Logics, \textit{Notre Dame Journal of Formal Logic} 42(4): 225-255 (2001).
	

\bibitem{Co}
G. Corsi, Weak Logics with strict implication, \textit{Zeitschrift fur Mathematische Logik und Grundlagen der Mathematic}, 33:389-406 (1987).

\bibitem{Dosen}
K. Do\v{s}en, Modal Translation in {\sf K} and {\sf D}. In:  \textit{Diamonds and Defaults}, Volume 229 of the series Synthese Library. pp.\ 103-127 (1994).

\bibitem{Dic}
D. de Jongh, F. Shirmohammadzadeh Maleki, Subintuitionistic Logics with Kripke Semantics, In 11th International Tbilisi Symposium on Logic, Language, and Computation, TbiLLC 2015, LNCS,  pp 333-354, Volume 10148, Springer (2017).

\bibitem{DF2}
D. de Jongh, F. Shirmohammadzadeh Maleki, Subintuitionistic Logics and the
Implications they Prove, \textit{Indagationes Mathematicae}, 10.1016/j.indag.2018.01.013.

\bibitem{Dic4}
D. de Jongh, F. Shirmohammadzadeh Maleki, Two neighborhood Semantics for
Subintuitionistic Logics, In \textit{12th International Tbilisi Symposium on Logic, Language, and
Computation, TbiLLC 2018}, LNCS, pp 64-85, Volume 11456, Springer (2019).

\bibitem{FD6}
D. de Jongh, F. Shirmohammadzadeh Maleki, Binary Modal Companions for Subintuitionistic Logics,  \textit{Mathematics, Logic and their Philosophies, Essays in Honour of Mohammad Ardeshir}, pp 35-52 (2021).

\bibitem{Kent}
K. Kikuchi, Relationships between basic propositional calculus and substrctural logics,\textit{Bulletin of the Section of Logic}, 30/1: 15-26 (2001).

\bibitem{Pac}
E. Pacuit, \textit{Neighborhood Semantics for Modal Logic}, Springer (2017).

\bibitem{Ren}
G. Renardel de Lavalette, A. Hendriks, D. de Jongh, Intuitionistic implication without disjunction, \textit{Journal of Logic and Computation}, Volume 22, Issue 3 (2012).







\bibitem{a2}
G. Restall, Subintuitionistic Logics, \textit{Notre Dame Journal of Formal Logic}, Volume 35, Number 1, Winter (1994).


\bibitem{FD}
F. Shirmohammadzadeh Maleki, D.  de Jongh,  Weak Subintuitionistic Logics, \textit{Logic Journal of the IGPL}, 25 (2): 214-231 (2017).         

\bibitem{Maleki}
F. Shirmohammadzadeh Maleki, Sequent Calculi for some subintuitionistic Logics, arXiv:2410.20425 [math.LO], Oct (2024).

\end{thebibliography}
\end{document}